\documentclass[11pt,a4paper]{article}
\usepackage[english]{babel}
\usepackage{latexsym}
\usepackage{amsmath,amsfonts,amsthm,amssymb,amscd}
\usepackage[mathscr]{eucal}
\usepackage{enumerate}
\usepackage[T1]{fontenc}
\usepackage[utf8]{inputenc}
\usepackage{graphicx}
\usepackage{fullpage}
\usepackage{url}
\usepackage{color}

\theoremstyle{definition}
\newcounter{Polya}

\newtheorem{theo}{Theorem}[section]
\newtheorem{pro}[theo]{Proposition}
\newtheorem{lemma}[theo]{Lemma}
\newtheorem{cor}[theo]{Corollary}
\newtheorem{defi}[theo]{Definition}
\newtheorem{obs}[theo]{Observation}
\newtheorem{rema}[theo]{Remark}
\newtheorem{prp}[theo]{Properties}
\newtheorem{exam}[theo]{Example}
\newtheorem{prps}[theo]{Propiedades}
\newtheorem{obss}[theo]{Observations}
\newtheorem{ejes}[theo]{Ejemplos}

\swapnumbers

%%%%%%%%%%%%%%%%%%%%%%%%%%%%%%%%%%%%%%%%%%%%%%%%%%%%%%%%%%%%%%%%%%%%%%%%%%%%%%%%%%%
%%%%%         Modificacion de los entornos para que añadan espacio antes y despues
%

\newenvironment{prop}{\smallskip\begin{pro}}{\end{pro}\smallskip}

\newenvironment{coro}{\smallskip\begin{cor}}{\end{cor}\smallskip}

%%%%%%%%%%%%%%%%%%%%%%%%%%%%%%%%%%%%%%%%%%%%%%%%%%%%%%%%%%%%%%%%%%%%%%%%%%%%%%%%%%%
%%%%%%%        ABREVIATURAS DE NOMBRES USUALES %%%%%%%%
%%% Caracteres griegos%%%

\def\oo{\infty}
\def\ri{\rightarrow}
\def\a{\alpha}
\def\b{\beta}
\def\de{\delta}

\def\ep{\varepsilon}
\def\o{\omega}

\def\ga{\gamma}

\def\ro{\rho}

%%%%%%%%%%%%%%%%%%%%%%%%%%%%%%%%%%%%%%%%%%%%%%%%%%%%%%%%%%%%%%%%%%%%%%%%%%%%%%%%%%%
%%%%%%%%     Letras Griegas en negrita
%
%\mathchar"0800
%\mathchar"080B
%\mathchar"080C
%\mathchar"080D
%\mathchar"080E
%\mathchar"080F
%\mathchar"080F
%\mathchar"0822
%\mathchar"0811
%\mathchar"0812
%\mathchar"0815
%\mathchar"0818
%\mathchar"0821
%\mathchar"081E
%\mathchar"0820
%\mathchar"0827
%\mathchar"0816
%\mathchar"0817
%\mathchar"081B
%\mathchar"081C
%%%%%%%%%%%%%%%%%%%%%%%%%%%%%%%%%%%%%%%%%%%%%%%%%%%%%%%%%%%%%%%%%%%%%%%%%%%%%%%%%%%%
%%%%%       Letras latinas en italica negrita
%
\def\ba{\boldsymbol{a}}

\newcommand{\PL}{Phragm\'{e}n-Lindel\"{o}f}
%%%%%%%%%%%%%%%%%%%%%%%%%%%%%%%%%%%%%%%%%%%%%%%%%%%%%%%%%%%%%%%%%%%%%%%%%%%%%%%%%%%
%%%%%%    Conjuntos %%%
\def\en{\subseteq}

      % Conjunto de los numeros naturales
\def\N{\mathbb{N}}
      %    "      "  "     "    enteros

	%    "      "  "     "    racionales

      %    "      "  "     "    reales
\def\R{\mathbb{R}}
\def\C{\mathbb{C}}      %    "      "  "     "    complejos

   % Espacio de funciones continuas, derivables, etc..
\def\Ri{{\cal R}}%superficie de Riemann
\def\M{\mathbb{M}}

\def\bM{\mathbb{M}}%sucesiones fuertemente regulares
\def\L{\mathbb{L}} %sucesiones fuertemente regulares

 %sucesiones fuertemente regulares
\def\m{\textbf{m}}

\def\bm{\textbf{m}}
%sucesiones fuertemente regulares

%%%%%%%%%%%%%%%%%%%%%%%%%%%%%%%%%%%%%%%%%%%%%%%%%%%%%%%%%%%%%%%%%%%%%%%%%%%%%%%%%%%
%%%%%%%%%%%%%%%%%%%%%%%%%%%%%%%%%%%%%   COMIENZA LA INSERCION DE LOS DIFERENTES ARCHIVOS
%%%%%%%%%%%%%%%%%%%%%%%%%%%%%%%%%%%%%
\numberwithin{equation}{section}

\begin{document}

\title{A \PL\ theorem via proximate orders,\\
and the propagation of asymptotics}

\author{Javier Jiménez-Garrido, Javier Sanz and Gerhard Schindl}
\date{\today}

\maketitle

\abstract{We prove that, for asymptotically bounded holomorphic
functions in a sector in $\C$, an asymptotic expansion in a single direction towards the vertex with constraints in terms of a logarithmically convex sequence admitting a nonzero proximate order entails asymptotic expansion in the whole sector with control in terms of the same sequence. This generalizes a
result by A. Fruchard and C. Zhang for Gevrey asymptotic expansions, and the proof strongly rests on a suitably refined version of the classical \PL\ theorem, here obtained for functions whose growth in a sector is specified by a nonzero proximate order in the sense of E. Lindel\"of and G. Valiron.}

AMS Classification: 30E15, 30C80, 26A12, 30H50.
% 30E15   	Asymptotic representations in the complex domain
% 30H50   	Algebras of analytic functions
% 26A12   	Rate of growth of functions, orders of infinity, slowly varying functions

\section{Introduction}

In 1999, A. Fruchard and C. Zhang \cite{FruchardZhang} proved that for a holomorphic function in a sector $S$ which is bounded in every proper subsector of $S$, the existence of an asymptotic expansion following just one direction implies global (non-uniform) asymptotic expansion in the whole of $S$. Moreover, a Gevrey version of this result is provided with a control on the type, namely:

\begin{theo}[\cite{FruchardZhang}, Theorem 11]\label{theoFruchardZhang} Let $f$ be a function analytic and bounded in an open sector $S=S(d,\ga,r)$ of bisecting direction $d\in\R$, opening $\pi\ga$ and radius $r$, with $\ga,r>0$.
Suppose $f$ has asymptotic expansion $\hat{f}=\sum_{n=0}^\oo a_n z^n $ of Gevrey order $1/k$ ($k>0$) and type (at least) $R(\theta_0)>0$ in some direction $\theta_0$ with $|\theta_0-d|<\pi\ga/2$, i.e.,
for every $\de>0$ there exists $C=C(\de)>0$ such that for every $z\in S$ with $\arg(z)=\theta_0$ and every nonnegative integer $p$ we have that
\begin{equation}\label{eqDefinAsympExpanGevr}
|f(z)-\sum_{n=0}^{p-1}a_n z^n|\leq C\left(\frac{1}{R(\theta_0)}+\de\right)^p \Gamma(1+\frac{p}{k})|z|^p.
\end{equation}
Then, in every direction $\theta$ of $S$, $f$ admits $\hat{f}$ as its asymptotic expansion of Gevrey order $1/k$ and type $R(\theta)$ given as follows:
$$R(\theta)=  \left \{
        \begin{array}{rcrcrcr}
               R(\theta_0)  \left(\frac{\sin(k(\theta-\a))}{\sin (k(\a'-\a))}\right)^{1/k} & \text{if} &
							\theta\in ( \a, \a' ],   \\
							& &\\
             R(\theta_0) & \text{if} & \theta\in [\a', \b' ], \\
						& & \\
             R(\theta_0) \left(\frac{\sin(k(\theta-\b)}{\sin( k(\b'-\b))}\right)^{1/k} & \text{if} &
						\theta\in [\b', \b ).
        \end{array}
    \right . $$
Here, $\a=d-\pi\ga/2$ and $\b=d+\pi\ga/2$ are the directions of the radial boundary of $S$, $\a'=\min(\theta_0,\a+\frac{\pi}{2k} )\in(\a,\theta_0]$, and $\b'=\max(\theta_0, \b-\frac{\pi}{2k} )\in[\theta_0,\b)$.	
\end{theo}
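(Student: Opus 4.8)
The plan is to deduce the statement from estimates on the remainders $r_p(z)=f(z)-\sum_{n=0}^{p-1}a_nz^n$, the engine being a Phragm\'en--Lindel\"of comparison against an explicit harmonic weight. As preparation I would record three facts. On the ray $\arg z=\theta_0$ the hypothesis reads $|r_p(z)|\,|z|^{-p}\le C(1/R(\theta_0)+\delta)^p\Gamma(1+p/k)$; comparing $r_p$ with $r_{p+1}$ and letting $z\to0$ gives coefficient bounds $|a_n|\le C'(1/R(\theta_0)+\delta)^n\Gamma(1+n/k)$; and the global bound $|f|\le M$ yields the crude a~priori estimate $|r_p(z)|\le M+\sum_{n<p}|a_n|\,|z|^n$, so that $|r_p(z)|\,|z|^{-p}$ grows at most polynomially (order $0<k$) as $z\to0$ on every closed subsector. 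The mere existence of $\hat f$ as the expansion in every direction will fall out at the end by fixing $p$ in the quantitative bound, so I would not argue it separately.

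The conceptual step is to trade the Gevrey datum for an exponential decay rate. By Stirling, $\inf_p(1/R)^p\Gamma(1+p/k)|z|^p$ is attained near $p\approx k\,(R/|z|)^k$ and equals, up to subexponential factors, $\exp(-R^k|z|^{-k})$; hence ``$\hat f$ is the expansion with type $R$ in direction $\theta$'' is equivalent to the optimally truncated remainder decaying like $\exp(-R(\theta)^k|z|^{-k})$ there. I would then observe that the exponent prescribed by the statement is a positive harmonic function in the sector adjacent to the radial boundary: writing $z=|z|e^{i\theta}$ one has $|z|^{-k}\sin(k(\theta-\alpha))=\mathrm{Im}\!\big(-e^{ik\alpha}z^{-k}\big)$, so that
\[
R(\theta)^k|z|^{-k}=\frac{R(\theta_0)^k}{\sin(k(\alpha'-\alpha))}\,\mathrm{Im}\!\big(-e^{ik\alpha}z^{-k}\big)
\]
is harmonic on $\alpha<\arg z<\alpha+\pi/k$, vanishes on the boundary ray $\arg z=\alpha$ (where $f$ is only known to be bounded, so no type can be certified), and is normalised on $\arg z=\alpha'$ to the full value $R(\theta_0)^k|z|^{-k}$. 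The normalising constant $1/\sin(k(\alpha'-\alpha))$ is exactly the origin of the denominator, and the two cases in $\alpha'=\min(\theta_0,\alpha+\pi/(2k))$ record whether the optimal ``source'' is the datum ray $\theta_0$ itself (when it lies within $\pi/(2k)$ of the boundary) or the deepest ray $\alpha+\pi/(2k)$ still carrying full type.

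The heart is the Phragm\'en--Lindel\"of step. On the subsector $\alpha<\arg z<\alpha'$, whose opening $\alpha'-\alpha\le\pi/(2k)$ is strictly below the critical value $\pi/k$, I would apply the maximum principle, for each $p$ and a parameter $\tau>0$, to $\Phi_{p,\tau}(z)=r_p(z)\,z^{-p}\exp(-\tau e^{ik\alpha}z^{-k})$. Its weight has modulus $\exp(-\tau|z|^{-k}\cos(k(\theta-\alpha)))\le1$ throughout and decays at the vertex, so $\Phi_{p,\tau}$ is bounded on the boundary edge $\arg z=\alpha$ (the decay taming the polynomial blow-up of $r_pz^{-p}$ via the crude bound) and by the full-type constant on the inner edge $\arg z=\alpha'$, while growing only polynomially inside; the maximum principle bounds it by the larger of the two edge values. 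Dividing the weight back and optimising over $\tau$ and over the truncation index $p$ converts the $\cos$ carried by the weight into the $\sin$ of the harmonic majorant above and yields $|r_p(z)|\le\widetilde C\,\Gamma(1+p/k)R(\theta)^{-p}|z|^p$ with $R(\theta)^k=R(\theta_0)^k\sin(k(\theta-\alpha))/\sin(k(\alpha'-\alpha))$. The symmetric construction at the ray $\beta$ gives the third branch.

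I expect the main obstacle to be twofold. First, on the radial boundary $\arg z=\alpha$ one has only boundedness, not decay, of $f$; forcing the maximum principle through requires the free parameter $\tau$ to absorb the vertex blow-up of $r_pz^{-p}$ without spoiling the constant, and squeezing out exactly the factor $\sin(k(\alpha'-\alpha))$ rests on a sharp joint optimisation of $\tau$ and $p$. Second, and more seriously, when the opening $\pi\gamma$ exceeds $\pi/k$ the full-type middle $[\alpha',\beta']$ lies beyond the reach of any single subcritical sector centred at $\theta_0$ (equivalently, beyond the sector of $k$-summability), so propagating the undegraded type $R(\theta_0)$ across it forces one to chain the above estimate through overlapping subsectors, each of opening $<\pi/k$, while controlling the accumulation of constants. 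This chaining, carried out with a nonzero proximate order in place of the pure power $|z|^{-k}$, is precisely the refined Phragm\'en--Lindel\"of theorem on which the paper ultimately rests.
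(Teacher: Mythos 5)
Your geometric preparation is sound---the harmonic majorant $|z|^{-k}\sin(k(\theta-\alpha))=\mathrm{Im}\left(-e^{ik\alpha}z^{-k}\right)$ and the optimal-truncation equivalence are exactly the right objects---but the maximum-principle step at the heart of your argument fails as written, for a reason of sign. The modulus of your weight $\exp(-\tau e^{ik\alpha}z^{-k})$ is $\exp(-\tau|z|^{-k}\cos(k(\theta-\alpha)))$, which is $\le 1$ \emph{everywhere} in the subsector and tends to $0$ at the vertex in every direction with $\cos(k(\theta-\alpha))>0$. After the maximum principle you must divide the weight back, and this multiplies the boundary bound by $\exp(+\tau|z|^{-k}\cos(k(\theta-\alpha)))$, a factor exploding as $z\to 0$ along every interior ray; no joint optimisation over $\tau$ and $p$ can repair this, and there is no mechanism by which ``the $\cos$ carried by the weight'' becomes the $\sin$ of your majorant. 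A two-constants argument transfers information only if the weight \emph{grows} at the vertex precisely where decay is to be proved and has modulus exactly $1$ on the edge where one only has boundedness: this forces the exponent $i\tau e^{ik\alpha}z^{-k}$ (log-modulus $+\tau|z|^{-k}\sin(k(\theta-\alpha))$), i.e.\ your weight rotated by $\pi/2$ --- which is the analogue of the factor $e^{V(a/z)}$ with a suitably chosen $\arg(a)$ used in Lemma~\ref{lemma.Mflat.one.direction.small.opening}.

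Even after correcting the phase, your per-$p$ scheme cannot run, and this is the genuinely missing idea. For \emph{fixed} $p$, the hypothesis on the ray $\theta_0$ gives $|r_p(z)z^{-p}|\le C(1/R+\delta)^p\Gamma(1+p/k)$, a bound uniform in $z$ but \emph{not} exponentially decaying; the decay $\exp(-R^k|z|^{-k})$ emerges only after optimal truncation, $p\approx k(R/|z|)^k$, and the optimally truncated remainder is not holomorphic in $z$. Consequently, on the good edge $r_pz^{-p}$ cannot absorb a weight growing like $\exp(\tau\sin(k(\alpha'-\alpha))|z|^{-k})$, while on the bad edge it blows up like $|z|^{-p}$ against a weight of modulus $\ge 1$: the weighted function is unbounded on \emph{both} edges and the maximum principle never applies (with your original cosine weight it does apply, but yields the useless bound above). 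This is exactly the gap filled by the Borel--Ritt--Gevrey theorem, the one ingredient your proposal never invokes, and which the paper names (together with the classical Phragm\'en--Lindel\"of theorem) as the basis of the Fruchard--Zhang proof; the paper's own generalization in Section 5 follows the same pattern. One first produces a single holomorphic $g$ with $g\sim\hat f$ of Gevrey order $1/k$ and controlled type on a full sector (the surjectivity statement, here Theorem~\ref{theoBorelRittGevreySanz}); then $h=f-g$ is one bounded holomorphic function, exponentially flat on the ray $\theta_0$, to which the Phragm\'en--Lindel\"of argument with the sine-type weight applies, giving $|h(z)|\le C_\theta\exp(-R(\theta)^k|z|^{-k})$ with the claimed sine law; finally $f=g+h$ reassembles the expansion with type $R(\theta)$, and the chaining through subcritical sectors that you correctly anticipate for openings beyond $\pi/k$ is carried out at the level of the flat function $h$ (as in Lemma~\ref{lemma.Mflat.one.direction.large.opening}), not of the truncations.
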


We warn the reader that there is no agreement about the terminology in this respect: while most authors adhere, as we will do, to the convention that the asymptotics in \eqref{eqDefinAsympExpanGevr} is Gevrey of order $1/k$, others (for example, Fruchard and Zhang or W. Balser in~\cite{BalserLNM}) say this is of order $k$. Moreover, the notion of type is not standard, compare with the definition by M. Canalis-Durand~\cite{CanalisDurand} for whom the type in case one has \eqref{eqDefinAsympExpanGevr} is $(1/R+\de)^k$. It should also be mentioned that the factor $\Gamma(1+p/k)$ could be changed into $(p!)^{1/k}$ without changing the asymptotics, but this would affect the base of the geometric factor providing the type (by Stirling's formula, see \cite[pp.\ 3-4]{CanalisDurand}) in any case. As it will be explained below, our interest in the type will be limited, and so we will choose a simple approach in this respect, see Definitions~\ref{defiUniformMAsympExpan} and~\ref{defiMAsympExpanDirec}.

The proof of this result is based on the classical \PL\ theorem and on the so-called Borel-Ritt-Gevrey theorem. This last statement provides the surjectivity, as long as the opening of the sector is at most $\pi/k$, of the Borel map sending a function with Gevrey asymptotic expansion of order $1/k$ in a sector to its series of asymptotic expansion, whose coefficients will necessarily satisfy Gevrey-like estimates. Also, the injectivity of the Borel map in sectors of opening greater than $\pi/k$ (known as Watson's lemma) plays an important role when specifying conditions that guarantee the uniqueness of a function with a prescribed Gevrey asymptotic expansion of order $1/k$ in a direction.

The main aim of this paper is to extend these results for
other types of asymptotic expansions available in the literature. This possibility was already mentioned in \cite{LastraMozoSanz}, where
A. Lastra, J. Mozo-Fern\'andez and the second author of this paper generalized the results of Fruchard and Zhang for holomorphic functions of several variables in a polysector (cartesian product of sectors) admitting strong asymptotic expansion in the sense of H. Majima~\cite{Majima,Majima2}, considering also the Gevrey case as introduced by Y. Haraoka in~\cite{Haraoka}.

The asymptotics we will consider are those associated to the consideration of general ultraholomorphic classes of functions defined by constraining the growth of the sequence of their successive derivatives in a sector in terms of a sequence $\M=(M_p)_{p\in\N_0}$ of positive numbers ($\N_0=\{0,1,2,\ldots\}=\{0\}\cup\N$). This sequence will play the role of $(\Gamma(1+p/k))_{p\in\N_0}$ in \eqref{eqDefinAsympExpanGevr}, and it will be subject to precise conditions in order to guarantee not only the natural algebraic and analytic properties of the corresponding class, but the possibility of extending to this more general framework the results on the injectivity or surjectivity of the Borel map and a \PL-like statement.

For log-convex sequences $\M$ the considered ultraholomorphic classes are algebras, and the injectivity of the Borel map had been characterized in the 1950's by S. Mandelbrojt~\cite{Mandelbrojt} for uniform asymptotics (see Theorem~\ref{theo.Mandelbrojt} in this paper) and by B. Rodr{\'\i}guez-Salinas~\cite{Salinas} for uniformly bounded derivatives (see Theorem~\ref{Theo.Salinas} here). However, regarding surjectivity only some partial results were available by J. Schmets and M. Valdivia~\cite{SchmetsValdivia} and V. Thilliez~\cite{Thilliez} at the very beginning of this century, resting on results from the ultradifferentiable setting (dealing with classes of smooth functions in open sets of $\R^n$ with suitably controlled derivatives) and disregarding questions about the optimality of the opening of the sector or the variation of the type along with the direction in the sector. Moreover, the techniques used, of a functional-analytic nature, do not provide any insight into a possible extension of the \PL\ theorem.
However, the second author~\cite{SanzFlat} has recently made intervene the classical concept of proximate order in these concerns, making possible to obtain more precise statements concerning the injectivity and surjectivity of the Borel map. Subsequently, the authors~\cite{JimenezSanz,JimenezSanzSchindl} have studied the relationship between log-convex sequences, proximate orders and the property of regular variation. As a result, a deeper understanding has been gained of the property of admissibility of a proximate order by a log-convex sequence, which gives the key for obtaining in this paper an analogue of \PL\ theorem for functions whose growth in a sector is specified in terms of such a sequence $\M$. It is worth mentioning that sequences admitting a proximate order are strongly regular (in the sense of Thilliez), and that all the instances of strongly regular sequences appearing in applications do admit a proximate order.

As in the Gevrey case, the study of the type as the direction moves in the sector is possible, although some information is lost in general (see Remark~\ref{rema.bisect.direct.Mflat}). This is due to the fact that the classical exponential kernel appearing in the finite Laplace transform providing the solution of the Borel-Ritt-Gevrey theorem in the Gevrey case is now replaced by the exponential of a function whose behavior at infinity is only given by some asymptotic relations, which is not enough for an accurate handling of the resulting type. However, in case the sequence $\M$ not only admits a proximate order, but provides one, the type may be better described.

The paper is organized as follows.
After fixing some notations, Section 2 is devoted to some preliminaries on general asymptotic expansions, ultraholomorphic classes and quasianalyticity results, specially when proximate orders are available.
All this material will be needed in Section 3, where several lemmas of a \PL\ flavor are obtained. A paradigm is Lemma~\ref{lemma.Mflat.one.direction.small.opening}, where exponential decrease is extended from just one direction to a whole small (in the sense of its opening) sector adjacent to it. Section 4 contains several versions of Watson's lemma on the uniqueness of a function admitting a given asymptotic expansion in a direction, and in the final Section 5 we characterize the functions with an asymptotic expansion in a sectorial region as those asymptotically bounded and admitting such expansion in just one direction in the region.

\section{Preliminaries}

We set $\N:=\{1,2,...\}$, $\N_{0}:=\N\cup\{0\}$.
$\mathcal{R}$ stands for the Riemann surface of the logarithm.
We consider bounded \emph{sectors}
$$S(d,\gamma,r):= \{z\in\mathcal{R}:|\hbox{arg}(z)-d|<\frac{\gamma\,\pi}{2},\ |z|<r\},
$$
respectively unbounded sectors
$$
S(d,\gamma):=\{z\in\mathcal{R}:|\hbox{arg}(z)-d|<\frac{\gamma\,\pi}{2}\},
$$
with \emph{bisecting direction} $d\in\R$, \emph{opening} $\gamma \,\pi$ ($\gamma>0$) and (in the first case) \emph{radius} $r\in(0,\infty)$. For unbounded sectors of opening $\gamma\,\pi$ bisected by direction 0, we write
$S_{\gamma}:=S(0,\gamma)$. In some cases, it will also be convenient to consider sectors whose elements have their argument in a half-open, or in a closed, bounded interval of the real line.\par\noindent

A \emph{sectorial region} $G(d,\gamma)$ with bisecting direction $d\in\R$ and opening $\gamma\,\pi$ will be an open connected set in $\mathcal{R}$ such that $G(d,\ga)\subset S(d,\ga)$, and
for every $\beta\in(0,\gamma)$ there exists $\rho=\rho(\beta)>0$ with $S(d,\beta,\rho)\subset G(d,\gamma)$.
In particular, sectors are sectorial regions. If $d=0$ we just write $G_\ga$.\par\noindent

A bounded (respectively, unbounded) sector $T$ is said to be a \emph{proper subsector} of a sectorial region (resp. of an unbounded sector) $G$, and we write $T\ll G$ (resp. $T\prec G$), if $\overline{T}\subset G$ (where the closure of $T$ is taken in $\mathcal{R}$, and so the vertex of the sector is not under consideration).

For an open set $U\subset\mathcal{R}$, the set of all holomorphic functions in $U$ will be denoted by~$\mathcal{H}(U)$. 
$\C[[z]]$ stands for the set of formal power series in $z$ with complex coefficients.

\subsection{Log-convex sequences and ultraholomorphic classes}

In what follows, $\M=(M_p)_{p\in\N_0}$ always stands for a sequence of positive real numbers, and we always assume that $M_0=1$.

\begin{defi}
We say a holomorphic function $f$ in a sectorial region $G$ admits the formal power series $\hat{f}=\sum_{n=0}^{\infty}a_{n}z^{n}\in\C[[z]]$ as its $\bM-$\emph{asymptotic expansion} in $G$ (when the variable tends to 0) if for every $T\ll G$ there exist $C_T,A_T>0$ such that for every $p\in\N_0$ one has
\begin{equation*}\Big|f(z)-\sum_{n=0}^{p-1}a_nz^n \Big|\le C_TA_T^pM_{p}|z|^p,\qquad z\in T.%\label{desarasint}
\end{equation*}
We will write $f\sim_{\bM}\hat{f}$ in $G$, and $\tilde{\mathcal{A}}_{\M}(G)$ will stand for the space of functions admitting $\bM-$asymptotic expansion in $G$.
\end{defi}

\begin{defi}\label{defiUniformMAsympExpan}
Given a sector $S$, we say $f\in\mathcal{H}(S)$ admits $\hat{f}=\sum_{n=0}^{\infty}a_{n}z^{n}\in\C[[z]]$ as its \emph{uniform $\bM-$asymptotic expansion in $S$ (of type $1/A$ for some $A>0$}) if there exists $C>0$ such that for every $p\in\N_0$ one has
\begin{equation*}\Big|f(z)-\sum_{n=0}^{p-1}a_nz^n \Big|\le CA^pM_{p}|z|^p,\qquad z\in S.
\end{equation*}
$\tilde{\mathcal{A}}^u_{\M}(S)$ stands for the space of functions admitting uniform $\bM-$asymptotic expansion in $S$ (of some type).
\end{defi}

\begin{defi}
Given $\M=(M_p)_{p\in\N_0}$, a constant $A>0$ and a sector $S$, we define
$$\mathcal{A}_{\M,A}(S)=\big\{f\in\mathcal{H}(S):\left\|f\right\|_{\M,A}:=\sup_{z\in S,n\in\N_{0}}\frac{|f^{(p)}(z)|}{A^{p}p!M_{p}}<\infty\big\}.$$
($\mathcal{A}_{\M,A}(S),\left\| \ \right\| _{\M,A}$) is a Banach space, and $\mathcal{A}_{\M}(S):=\cup_{A>0}\mathcal{A}_{\M,A}(S)$ is called a \textit{Roumieu-Carleman ultraholomorphic class} in the sector $S$.
\end{defi}

Since the derivatives of $f\in\mathcal{A}_{\bM,A}(S)$ are Lipschitzian, for every $n\in\N_{0}$ one may define
$$f^{(n)}(0):=\lim_{z\in S,z\to0 }f^{(n)}(z)\in\C.$$
We recall now the relationship between these classes and the concept of asymptotic expansion. As a consequence of Taylor's formula,
we have the following result (see \cite{BalserLNM,galindosanz}).

\begin{prop}\label{prop.ultra.class.inclusion}
Let $S$ be a sector, if $f\in\mathcal{A}_{\M,A}(S)$ then $f$ admits $\hat{f}=\sum_{p\in\N_0}\frac{1}{p!}f^{(p)}(0)z^p$ as its uniform $\bM-$asymptotic expansion in $S$ of type $1/A$. Consequently, we have that
$$\mathcal{A}_{\M}(S)\en \tilde{\mathcal{A}}^u_{\M}(S) \en  \tilde{\mathcal{A}}_{\M}(S).$$
\end{prop}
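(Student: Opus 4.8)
The plan is to read off the remainder estimate directly from Taylor's formula with integral remainder, exploiting that the very definition of the norm $\|\cdot\|_{\M,A}$ gives a uniform bound on all derivatives of $f$ over $S$. Write $C:=\|f\|_{\M,A}$, so that $|f^{(p)}(z)|\le C A^p p!\,M_p$ for every $z\in S$ and every $p\in\N_0$. As recalled just above the statement, the boundedness of the derivatives makes them Lipschitzian, so the limits $f^{(n)}(0)=\lim_{z\to 0}f^{(n)}(z)$ exist and the coefficients $a_n=f^{(n)}(0)/n!$ of $\hat f$ are well defined.

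First I would fix $z\in S$ and observe that, since $\arg(tz)=\arg(z)$ and $|tz|<r$ for all $t\in(0,1]$, the whole segment $(0,z]$ lies in $S$. For $0<s<1$, Taylor's formula with integral remainder on the segment $[sz,z]$ gives
\begin{equation*}
f(z)=\sum_{n=0}^{p-1}\frac{f^{(n)}(sz)}{n!}(z-sz)^n+\frac{1}{(p-1)!}\int_{sz}^{z}(z-w)^{p-1}f^{(p)}(w)\,dw.
\end{equation*}
Letting $s\to 0^+$, the continuity of each $f^{(n)}$ up to the vertex turns the finite sum into $\sum_{n=0}^{p-1}a_n z^n$, while the uniform boundedness of the integrand guarantees that the improper integral $\int_{0}^{z}(z-w)^{p-1}f^{(p)}(w)\,dw$ converges. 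This passage to the vertex is the step I expect to require the most care, as it is precisely where the limit values $f^{(n)}(0)$ enter the Taylor expansion.

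Next I would estimate the resulting remainder by parametrizing $w=tz$, $t\in(0,1]$, so that $dw=z\,dt$ and
\begin{equation*}
\Big|f(z)-\sum_{n=0}^{p-1}a_n z^n\Big|=\frac{|z|^p}{(p-1)!}\Big|\int_0^1(1-t)^{p-1}f^{(p)}(tz)\,dt\Big|\le \frac{C A^p p!\,M_p}{(p-1)!}\,|z|^p\int_0^1(1-t)^{p-1}\,dt.
\end{equation*}
Since $\int_0^1(1-t)^{p-1}\,dt=1/p$, the right-hand side equals $C A^p M_p|z|^p$, with $C$ independent of $p$ and $z$; this is exactly the estimate defining a uniform $\bM$-asymptotic expansion of type $1/A$.

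It remains to deduce the two inclusions. Any $f\in\mathcal{A}_{\M}(S)$ lies in some $\mathcal{A}_{\M,A}(S)$, and the estimate just obtained shows $f\in\tilde{\mathcal{A}}^u_{\M}(S)$, giving $\mathcal{A}_{\M}(S)\en\tilde{\mathcal{A}}^u_{\M}(S)$. For the second inclusion, a sector $S$ is in particular a sectorial region, and the uniform bound valid on all of $S$ holds a fortiori on every proper subsector $T\ll S$ with the same constants $C_T=C$, $A_T=A$; hence $f\in\tilde{\mathcal{A}}_{\M}(S)$ and $\tilde{\mathcal{A}}^u_{\M}(S)\en\tilde{\mathcal{A}}_{\M}(S)$.
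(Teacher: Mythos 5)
Your proof is correct and follows precisely the route the paper indicates: the paper gives no argument of its own, stating only that the result is a consequence of Taylor's formula (citing Balser and Galindo--Sanz), and your Taylor expansion with integral remainder along the segment $[sz,z]$, followed by the limit $s\to 0^+$ and the parametrization $w=tz$, is exactly that standard argument, recovering the same constant $C=\|f\|_{\M,A}$ and type $1/A$. The only detail left implicit is the trivial case $p=0$, where the empty-sum estimate $|f(z)|\le C M_0$ follows directly from the definition of the norm rather than from the Taylor formula (which requires $p\ge 1$).
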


Next we specify some conditions on the sequence $\M$ that will have important consequences on the previous classes or spaces.

\begin{defi}\label{propiedsuces}
We say:
\begin{itemize}
\item[(i)]  $\bM$ is \textit{logarithmically convex} (for short, (lc)) if
$$M_{p}^{2}\le M_{p-1}M_{p+1},\qquad p\in\N.
$$
\item[(ii)]  $\bM$ is \textit{derivation closed} (for short, (dc)) if there exists $A>0$ such that
$$M_{p+1}\le A^{p+1}M_{p},\qquad p\in\N_0.$$
\item[(iii)]  $\bM$ is of \textit{moderate growth} (briefly, (mg)) if there exists $B>0$ such that
$$M_{p+q}\le B^{p+q}M_{p}M_{q},\qquad p,q\in\N_0.$$
\item[(iv)]  $\bM$ satisfies the \textit{strong non-quasianalyticity condition} (for short, (snq)) if there exists $C>0$ such that
$$
\sum_{q\ge p}\frac{M_{q}}{(q+1)M_{q+1}}\le C\frac{M_{p}}{M_{p+1}},\qquad p\in\N_0.
$$
\end{itemize}
\end{defi}

Obviously, (mg) implies (dc).

\begin{defi}[V. Thilliez~\cite{Thilliez}]\label{sucfuereg}
We say $\bM$ is \textit{strongly regular} if it verifies (lc), (mg) and (snq).
\end{defi}

\begin{defi} For a sequence $\M$ we define {\it the sequence of quotients}  $\m=(m_p)_{p\in\N_0}$ by
$$m_p:=\frac{M_{p+1}}{M_p}, \qquad p\in \N_0.$$
\end{defi}

It is obvious that $\M$ is (lc) if, and only if, $\bm$ is nondecreasing.

\begin{defi} Let $\M$ and $\L$ be sequences, we say that {\it $\M$ is equivalent to $\L$}, and we write
 $\M\approx\L$, if there exist positive constants $A,B>0$ such that
$$A^p L_p\leq M_p\leq B^p L_p,\qquad p\in\N_0.$$
 \end{defi}

\begin{exam}\label{exampleSequences}
We mention some interesting examples. In particular, those in (i) and (iii) appear in the applications of summability theory to the study of formal power series solutions for different kinds of equations.
\begin{itemize}
\item[(i)] The sequences $\M_{\a,\b}:=\big(p!^{\a}\prod_{m=0}^p\log^{\b}(e+m)\big)_{p\in\N_0}$, where $\a>0$ and $\b\in\R$, are strongly regular (more precisely, in case $\b<0$ the sequence is equivalent to a strongly regular one, see Remark~\ref{rema.alg.prop.ultra.class}).
    For $\b=0$, we have the best known example of strongly regular sequence, $\M_{\a,0}=(p!^{\a})_{p\in\N_{0}}$, called the \textit{Gevrey sequence of order $\a$}.
\item[(ii)] The sequence $\M_{0,\b}:=(\prod_{m=0}^p\log^{\b}(e+m))_{p\in\N_0}$, with $\b>0$, is (lc), (mg) and $\bm$ tends to infinity, but (snq) is not satisfied.
\item[(iii)] For $q>1$, $\M_q:=(q^{p^2})_{p\in\N_0}$ is (lc) and (snq), but not (mg).
\end{itemize}
\end{exam}

\begin{rema}\label{rema.alg.prop.ultra.class} For any sequence $\M$, the classes $\mathcal{A}_{\M}(S)$, $\tilde{\mathcal{A}}^u_{\M}(S)$ and $\tilde{\mathcal{A}}_{\M}(S)$ are vector spaces. If $\M$ is (lc), they are algebras; if $\M$ is (dc), they are stable under taking derivatives.
Moreover, equivalent sequences define the same classes.
\end{rema}

\begin{defi}\label{defiMAsympExpanDirec}
Let $f$ be a function defined in a sectorial region $G=G(d,\ga)$, and $\theta$ be a direction in $G$, i.e. $|\theta-d|<\pi\ga/2$. We say
$f$ has \emph{$\M$-asymptotic expansion $\hat{f}=\sum_{n=0}^\oo a_n z^n $ in direction $\theta$} if
there exist $r_\theta,C_\theta,A_\theta>0$ such that the segment $(0,r_\theta e^{i\theta}]$ is contained in $G$, and for every $z\in (0,r_\theta e^{i\theta}]$ and every $p\in\N_0$ one has
\begin{equation*}%\label{eqDefinMAsympExpanDirec}
\Big|f(z)-\sum_{n=0}^{p-1}a_nz^n \Big|\le C_\theta A_\theta^pM_{p}|z|^p.
\end{equation*}
In this case, we say the type is $1/A_\theta$. Of course, the definition makes sense as long as the function is defined only in direction $\theta$ near the origin, i.e. in a segment $(0,r e^{i\theta}]$ for suitable $r>0$.
\end{defi}

One may accordingly define classes of formal power series
$$\C[[z]]_{\M,A}=\Big\{\hat{f}=\sum_{n=0}^\oo a_nz^n\in\C[[z]]:\, \left|\,\ba \,\right|_{\M,A}:=\sup_{p\in\N_{0}}\displaystyle \frac{|a_{p}|}{A^{p}M_{p}}<\infty\Big\}.$$
$(\C[[z]]_{\M,A},\left| \  \right|_{\M,A})$ is a Banach space, and we put $\C[[z]]_{\M}:=\cup_{A>0}\C[[z]]_{\M,A}$.

\begin{rema}\label{rema.limit.f.at.0}
Given $f\in\tilde{\mathcal{A}}_{\M}(G)$ with $f\sim_{\M}\hat{f}=\sum_{p=0}^{\infty}a_{p}z^{p}$, it is plain to check that for every bounded proper subsector $T$ of $G$ and every $p\in\N_0$ one has
$$ a_p=\lim_{ \genfrac{}{}{0pt}{}{z\ri0}{z\in T}} \frac{f^{(p)}(z)}{p!},$$
and we can set ${f^{(p)}(0)}:=p!a_p$. Moreover, if we define $\tilde{\mathcal{B}}(f):=\hat{f}$, it is straightforward that $\tilde{\mathcal{B}}(f)\in\C[[z]]_{\M}$, and the map $\tilde{\mathcal{B}}:\tilde{\mathcal{A}}_{\M}(G)\longrightarrow \C[[z]]_{\M}$
so defined is the \textit{asymptotic Borel map}. If $S$ is a sector, using Proposition~\ref{prop.ultra.class.inclusion}
we see that the asymptotic Borel map is also well defined on $\mathcal{A}_{\M}(S)$ and $\tilde{\mathcal{A}}^u_{\M}(S)$.
\end{rema}

\subsection{Classical quasianalyticity results}\label{subsectClassicalQuasianal}

We introduce first the notions of flatness and quasianalyticity.

\begin{defi}
A function $f$ in any of the previous classes is said to be \textit{flat} if $\tilde{\mathcal{B}}(f)$ is the null formal power series (denoted $\hat{0}$), or in other words, $f\sim_{\M}\hat{0}$.
\end{defi}

\begin{defi}
Let $S$ be a sector, $G$ a sectorial region and $\M=(M_{p})_{p\in\N_{0}}$ be a sequence of positive numbers. We say that $\mathcal{A}_{\M}(S)$, $\tilde{\mathcal{A}}^u_{\M}(S)$,  or $\tilde{\mathcal{A}}_{\M}(G)$ is  \textit{quasianalytic} if it does not contain nontrivial flat functions (in other words, the Borel map is injective in this class).
\end{defi}

In order to simplify some statements or to avoid trivial situations, from now on in this paper we will assume the standard property that
$$
\textbf{the sequence $\M$ is logarithmically convex with $\lim_{p\ri\oo} m_p=\oo$}.
$$
The following result characterizes quasianalyticity for the classes of functions with uniformly bounded derivatives in an unbounded sector.
It first appeared in Rodr{\'\i}guez-Salinas~\cite{Salinas}, although it is frequently attributed to B. I. Korenbljum~\cite{korenbljum}.

\begin{theo}[\cite{Salinas}, \ Theorem 12]\label{Theo.Salinas}
Let $\M$ and $\ga>0$ be given. The following statements are equivalent:
\begin{itemize}
\item[(i)] The class $\mathcal{A}_{\M}(S_{\gamma})$ is quasianalytic.
\item[(ii)] $\displaystyle \sum_{p=0}^{\infty}\Big(\frac{1}{(p+1)m_{p}}\Big)^{1/(\ga+1)}$ diverges.
\end{itemize}
\end{theo}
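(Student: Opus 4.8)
The plan is to reduce the statement to the classical \emph{Denjoy--Carleman theorem} through a conformal change of variable adapted to the opening, the key observation being that condition (ii) is precisely the Denjoy--Carleman divergence condition for a transformed sequence. First I would reformulate what quasianalyticity means here: if $f\in\mathcal{A}_{\M,A}(S_{\ga})$ is flat, then $\tilde{\mathcal{B}}(f)=\hat{0}$, so by definition $|f(z)|\le CA^pM_p|z|^p$ for every $p\in\N_0$ and every $z\in S_{\ga}$; optimizing in $p$ gives the uniform bound $|f(z)|\le C\,h_{\M}(A|z|)$ with $h_{\M}(t):=\inf_{p\in\N_0}M_pt^p$. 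The derivative side of the class is governed not by $M_p$ but by $\mathcal{N}_p:=p!\,M_p$, since $\|f\|_{\M,A}<\infty$ means $|f^{(p)}(z)|\le CA^p\mathcal{N}_p$. This suggests introducing the transformed sequence
$$\widetilde{M}_p:=(p!\,M_p)^{1/(\ga+1)}=\mathcal{N}_p^{1/(\ga+1)},\qquad p\in\N_0.$$
One checks immediately that $\widetilde{M}$ inherits (lc) from $\M$ and that $\lim_p\widetilde{M}_{p+1}/\widetilde{M}_p=\infty$; most importantly, $\widetilde{M}_p/\widetilde{M}_{p+1}=((p+1)m_p)^{-1/(\ga+1)}$, so that the series in (ii) is exactly the Denjoy--Carleman sum $\sum_p\widetilde{M}_p/\widetilde{M}_{p+1}$.

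The engine of the proof is the conformal substitution $z=\zeta^{\ga}$, which maps the right half-plane (opening $\pi$) onto $S_{\ga}$ and sends the vertex to the boundary point $\zeta=0$. Writing $g(\zeta):=f(\zeta^{\ga})$, the function $g$ is bounded and holomorphic, and flatness transfers to the decay $|g(\zeta)|\le C\,h_{\M}(A|\zeta|^{\ga})$ near $\zeta=0$. The point is to combine this decay with the transported derivative bounds: tracking, via Fa\`a di Bruno's formula and Cauchy estimates, how the factorial weights $p!\,M_p$ interact with the angular contraction produced by $z=\zeta^{\ga}$ places $g$ in a classical Carleman class on a boundary segment whose defining sequence is (equivalent to) $\widetilde{M}$. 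Here the exponent $1/(\ga+1)$ is forced: the factor $\ga$ comes from the power map that normalizes the opening, while the extra $+1$ comes from the factorial weight $p!$ carried by the derivatives.

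With the reduction in place I would invoke the two halves of the classical Denjoy--Carleman dichotomy for $\widetilde{M}$. For the implication (ii)$\Rightarrow$(i), divergence of $\sum_p\widetilde{M}_p/\widetilde{M}_{p+1}$ is equivalent to divergence of the logarithmic integral $\int^{\infty}\log T(r)\,r^{-2}\,dr$, where $T(r):=\sup_{p}r^p/\widetilde{M}_p$; a Carleman--Phragm\'en--Lindel\"of argument (a bounded holomorphic function whose boundary modulus has divergent logarithmic integral must vanish) then forces $g\equiv0$, hence $f\equiv0$, and $\mathcal{A}_{\M}(S_{\ga})$ is quasianalytic. For the converse, assuming the series in (ii) converges I would construct a nonzero flat element of $\mathcal{A}_{\M}(S_{\ga})$: build the standard Denjoy--Carleman outer function attached to $\widetilde{M}$ on the boundary of the half-plane, whose prescribed modulus is summable in the logarithmic integral exactly because the series converges, and pull it back through $z=\zeta^{\ga}$, checking that the derivative estimates $|f^{(p)}|\le CA^pp!\,M_p$ survive the pullback and that $f\sim_{\M}\hat{0}$.

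\textbf{Main obstacle.} The delicate step is the reduction of the second paragraph: rigorously transporting the uniform Roumieu--Carleman estimates across the branch-point substitution $z=\zeta^{\ga}$ and extracting the sharp sequence $\widetilde{M}_p=(p!\,M_p)^{1/(\ga+1)}$ uniformly up to \emph{both} radial boundaries of the sector, rather than along a single direction. Equivalently, one must control the associated function $h_{\M}$ sharply enough to see the exponent $1/(\ga+1)$; this is precisely where the hypotheses (lc) and $m_p\to\infty$, and in the refined treatment the regular-variation/proximate-order behaviour of $\M$, enter. The construction in the converse direction is correspondingly subtle, since one must realize a prescribed flat decay at the vertex while keeping all derivatives controlled up to the boundary rays.
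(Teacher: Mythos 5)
The paper offers no proof of this statement: it is quoted as a classical theorem of Rodr\'{\i}guez-Salinas (and attributed elsewhere to Korenbljum), so your argument has to be judged on its own terms and against those classical proofs.

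Your preliminary observations are fine: flatness plus the derivative bounds does give $|f(z)|\le C\,e^{-M(1/(A|z|))}$, the sequence $\widetilde{M}_p=(p!\,M_p)^{1/(\ga+1)}$ is (lc) with quotients tending to infinity, and condition (ii) is exactly the Denjoy--Carleman condition $\sum_p\widetilde{M}_p/\widetilde{M}_{p+1}=\infty$. But the pivotal claim of your second paragraph --- that the substitution $z=\zeta^{\ga}$ together with Fa\`a di Bruno and Cauchy estimates places $g=f(\zeta^{\ga})$ in the Carleman class $C\{\widetilde{M}_p\}$ on a boundary segment --- is not merely delicate, it is false. Test it at $\ga=1$, where your substitution is the identity: take $M_p=p!^{2}$ (which satisfies the paper's standing hypotheses) and $f(z)=e^{-z^{-1/2}}$. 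A Cauchy estimate on discs $D(z,|z|/2)$ shows $|f^{(p)}(z)|\le C^{p}\,p!\,M_p$ on $S_1$, and $f$ is flat, so $f\in\mathcal{A}_{\M}(S_1)$ is a legitimate flat element (consistently with the theorem, since here $\sum((p+1)m_p)^{-1/2}=\sum(p+1)^{-3/2}$ converges). Your claim would put $h(y)=f(iy)$ in $C\{(p!\,M_p)^{1/2}\}=C\{p!^{3/2}\}$ on a segment $[0,r_0]$; since $h$ is flat at $0$, membership in that class forces, via Taylor's formula, $|h(y)|\le C\inf_{p}A^{p}p!^{1/2}y^{p}\le C e^{-c/y^{2}}$ near $0$. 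But $|h(y)|=e^{-y^{-1/2}\cos(\pi/4)}$, which decays far more slowly; contradiction. The restriction (or any power-substitution transport) of a flat element of $\mathcal{A}_{\M}(S_\ga)$ lies in $C\{p!\,M_p\}$ and, in general, in nothing smaller.

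The conceptual point is that no pointwise transport of estimates can produce the weight $(p+1)m_p$ with exponent $1/(\ga+1)$: restriction to a curve or a substitution $z=\zeta^{\sigma}$ only reshuffles the boundedness/decay data, and what that data yields is Mandelbrojt's criterion, i.e.\ exponent $1/\ga$ applied to $m_p$ (Theorem~\ref{theo.Mandelbrojt} and Corollary~\ref{corollary.Watson.Lemma.Uniform.Bounds} of the paper). Indeed, your third paragraph, which uses only the boundary modulus of $g$ and the logarithmic integral, proves exactly that weaker statement, and the divergence in (ii) is strictly weaker than divergence of $\sum(1/m_p)^{1/\ga}$ (for example $m_p=p^{\ga}\log^{2\ga}p$ with $\ga<1$ separates them), so the log-integral mechanism cannot reach the present theorem. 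The passage from $1/\ga$ and $m_p$ to $1/(\ga+1)$ and $(p+1)m_p$ is precisely the contribution of the uniform derivative bounds, and in the classical proofs (Salinas's, via functions with null moments, and Korenbljum's) it comes from an argument coupling the Taylor order with \PL/harmonic-measure estimates inside the sector, not from the one-dimensional Denjoy--Carleman theorem applied to a transformed sequence. The converse half of your plan inherits the same defect in reverse: an outer function in the half-plane with prescribed fast-decaying boundary modulus, pulled back through $z=\zeta^{\ga}$, has no reason to satisfy $|f^{(p)}(z)|\le CA^{p}p!\,M_p$ uniformly on $S_\ga$ (such outer functions generically have unbounded derivatives at the boundary), and verifying those derivative bounds is the genuinely hard half of the non-quasianalyticity construction.
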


This result can be rewritten in terms of the classical notion of exponent of convergence of a sequence.

\begin{prop}[\cite{holland}, p.\ 65]
Let $(c_n)_{n\in\N_0}$ be a nondecreasing sequence of positive real numbers tending to infinity. The \textit{exponent of convergence} of $(c_n)_n$ is defined as
$$
\lambda_{(c_n)}:=\inf\{\mu>0:\sum_{n=0}^\infty \frac{1}{c_n^{\mu}}\textrm{ converges}\}
$$
(if the previous set is empty, we put $\lambda_{(c_n)}=\infty$). Then, one has
\begin{equation*}%\label{equaexpoconv}
\lambda_{(c_n)}=\limsup_{n\to\infty}\frac{\log(n)}{\log(c_n)}.
\end{equation*}
\end{prop}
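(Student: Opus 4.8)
Write $\lambda:=\lambda_{(c_n)}$ and $L:=\limsup_{n\to\infty}\frac{\log n}{\log c_n}$. The plan is to establish the two inequalities $\lambda\le L$ and $\lambda\ge L$ separately, in each direction comparing the series $\sum_n 1/c_n^{\mu}$ with an ordinary $p$-series $\sum_n n^{-s}$, whose convergence precisely for $s>1$ is the only external fact I need. The elementary bridge is that, since $c_n\to\infty$, one has $c_n\ge1$ and hence $\log c_n>0$ for all large $n$; thus the inequality $\frac{\log n}{\log c_n}<\mu$ is equivalent to $n<c_n^{\mu}$, and similarly with the reversed inequality. Note also that the convergence set $\{\mu>0:\sum_n 1/c_n^{\mu}\text{ converges}\}$ is an up-set: if $\sum 1/c_n^{\mu}$ converges and $\mu'>\mu$, then eventually $1/c_n^{\mu'}\le 1/c_n^{\mu}$ (as $c_n\ge1$), so $\sum 1/c_n^{\mu'}$ converges too; consequently $\lambda$ is the left endpoint of an interval of the form $(\lambda,\infty)$ or $[\lambda,\infty)$.

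For $\lambda\le L$ I may assume $L<\infty$, the other case being vacuous. Fix any $\mu>L$ and choose $\mu'$ with $L<\mu'<\mu$. By the definition of limit superior, $\frac{\log n}{\log c_n}<\mu'$ for all sufficiently large $n$, so $c_n^{\mu'}>n$ and therefore $c_n^{\mu}=(c_n^{\mu'})^{\mu/\mu'}>n^{\mu/\mu'}$ with exponent $\mu/\mu'>1$. Comparison with the convergent series $\sum_n n^{-\mu/\mu'}$ shows that $\sum_n 1/c_n^{\mu}$ converges, so $\mu$ belongs to the convergence set and $\lambda\le\mu$; letting $\mu\downarrow L$ yields $\lambda\le L$.

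For $\lambda\ge L$, fix any $\mu<L$ and choose $\mu'$ with $\mu<\mu'<L$. Here the limit superior only supplies the reversed inequality $\frac{\log n}{\log c_n}>\mu'$, equivalently $c_n<n^{1/\mu'}$, along some subsequence $n_k\to\infty$ rather than for all $n$, so a direct termwise comparison is unavailable; this is the one delicate point, and it is exactly where the hypothesis that $(c_n)$ is \emph{nondecreasing} is used. For each $k$ and every index $m\le n_k$ one has $c_m\le c_{n_k}<n_k^{1/\mu'}$, hence $1/c_m^{\mu}>n_k^{-\mu/\mu'}$ for all these $m$, and summing the $n_k$ such terms gives
\[
\sum_{m=1}^{n_k}\frac{1}{c_m^{\mu}}\ \ge\ n_k\cdot n_k^{-\mu/\mu'}\ =\ n_k^{\,1-\mu/\mu'}.
\]
Since $\mu/\mu'<1$, the right-hand side tends to $\infty$, so the partial sums are unbounded and $\sum_n 1/c_n^{\mu}$ diverges; thus no $\mu<L$ lies in the convergence set, and because that set is an up-set we conclude $\lambda\ge L$. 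Combining the two inequalities gives $\lambda=L$, and the extreme cases $L=0$ and $L=\infty$ are read off directly (respectively convergence for every $\mu>0$, so $\lambda=0$, and divergence for every finite $\mu$, so the convergence set is empty and $\lambda=\infty$ by convention). The main obstacle, as indicated, is converting the subsequential lower bound into genuine divergence of the partial sums, which the monotonicity of $(c_n)$ resolves cleanly.
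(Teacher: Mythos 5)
Your proof is correct. Note that the paper itself gives no argument for this proposition: it is quoted verbatim from Holland's book (\cite{holland}, p.~65) as a known classical fact, so there is no internal proof to compare against. Your argument is the standard one for the exponent of convergence: the inequality $\lambda\le L$ by termwise comparison with a convergent $p$-series, and $\lambda\ge L$ via the partial-sum estimate $\sum_{m\le n_k}c_m^{-\mu}\ge n_k\,c_{n_k}^{-\mu}\ge n_k^{\,1-\mu/\mu'}$, which is exactly where the monotonicity hypothesis enters; you correctly identified that the limit superior only gives information along a subsequence and that monotonicity is what converts this into genuine divergence. The handling of the extreme cases $L=0$ and $L=\infty$ and the up-set structure of the convergence set are also in order. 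Two negligible imprecisions: $c_n\ge 1$ gives only $\log c_n\ge 0$, but since $c_n\to\infty$ one has $c_n>1$ eventually, which is what you actually use; and the sequence is indexed from $n=0$ while your partial sums start at $m=1$, which affects nothing since finitely many terms are irrelevant to convergence.
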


According to this last formula, we may define the index
$$\o(\M):=\liminf_{p\to\infty} \frac{\log(m_p)}{\log(p)},$$
in such a way that
\begin{equation}\label{eqFormulaExponentConvergence}
\lambda_{(m_p)}=\frac{1}{\o(\M)}, \qquad \lambda_{((p+1)m_p)}=\frac{1}{\o(\M)+1}.
\end{equation}

So, Theorem~\ref{Theo.Salinas} may be stated as

\begin{coro}\label{Coro.Salinas}
Let $\M$ and $\ga>0$ be given.
%Let $\M$ be a (lc) sequence with $\lim_{p\ri\oo} m_p=\oo$ and $\ga>0$.
The following statements are equivalent:
\begin{itemize}
\item[(i)] The class $\mathcal{A}_{\M}(S_{\gamma})$ is quasianalytic.
\item[(ii)] $\ga>\o(\M)$, or $\ga=\o(\M)$ and $\displaystyle \sum_{p=0}^{\infty}\Big(\frac{1}{(p+1)m_{p}}\Big)^{1/(\o(\M)+1)}$ diverges.
\end{itemize}
\end{coro}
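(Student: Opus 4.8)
The plan is to read off the corollary from Theorem~\ref{Theo.Salinas} by reinterpreting its divergence condition through the exponent of convergence. Since $\M$ is (lc) the quotients $\bm$ are nondecreasing, and by the standing assumption $m_p\to\oo$; hence $c_p:=(p+1)m_p$ is a nondecreasing sequence of positive reals tending to infinity, so the proposition on the exponent of convergence applies to it. With the choice $\mu:=1/(\ga+1)$, the series in Theorem~\ref{Theo.Salinas}(ii) is exactly $\sum_{p\ge0}c_p^{-\mu}$, and by~\eqref{eqFormulaExponentConvergence} the relevant threshold is $\lambda_{(c_p)}=\lambda_{((p+1)m_p)}=1/(\o(\M)+1)$.

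Next I would invoke the elementary behaviour of $\sum_p c_p^{-\mu}$ relative to $\lambda_{(c_p)}$, which follows from the definition of the latter as an infimum together with a comparison test: the series converges when $\mu>\lambda_{(c_p)}$ and diverges when $\mu<\lambda_{(c_p)}$, while at $\mu=\lambda_{(c_p)}$ nothing can be said in general. Because $t\mapsto 1/(t+1)$ is strictly decreasing, comparing $\mu=1/(\ga+1)$ with $1/(\o(\M)+1)$ amounts to comparing $\ga$ with $\o(\M)$ in the opposite sense: $\mu<\lambda_{(c_p)}\iff\ga>\o(\M)$, $\mu>\lambda_{(c_p)}\iff\ga<\o(\M)$, and $\mu=\lambda_{(c_p)}\iff\ga=\o(\M)$.

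Finally I would assemble the cases. If $\ga>\o(\M)$ then $\sum_p c_p^{-\mu}$ diverges, so Theorem~\ref{Theo.Salinas} yields quasianalyticity; if $\ga<\o(\M)$ it converges, so the class is not quasianalytic; and if $\ga=\o(\M)$ the criterion of Theorem~\ref{Theo.Salinas} is precisely the divergence of the displayed series $\sum_{p}\big(1/((p+1)m_p)\big)^{1/(\o(\M)+1)}$. Collecting these, the class is quasianalytic if and only if $\ga>\o(\M)$, or $\ga=\o(\M)$ together with divergence of that series, which is statement (ii). I do not expect a real obstacle, since the corollary only recasts Theorem~\ref{Theo.Salinas}; the single delicate point is the boundary case $\ga=\o(\M)$, where the exponent-of-convergence test is inconclusive and the explicit series must be retained, which is exactly why (ii) is phrased as a disjunction. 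The degenerate possibility $\o(\M)=+\oo$ is harmless: then $\ga<\o(\M)$ always, the series converges, and the class is never quasianalytic, in agreement with (ii).
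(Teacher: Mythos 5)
Your proposal is correct and follows essentially the same route as the paper, which obtains the corollary by rewriting the divergence condition of Theorem~\ref{Theo.Salinas} through the exponent of convergence $\lambda_{((p+1)m_p)}=1/(\o(\M)+1)$ from \eqref{eqFormulaExponentConvergence}, splitting into the cases $\ga>\o(\M)$, $\ga<\o(\M)$, and the inconclusive boundary case $\ga=\o(\M)$ where the series itself must be kept. Your explicit treatment of the comparison test and of the degenerate case $\o(\M)=\infty$ merely spells out details the paper leaves implicit.
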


\begin{rema}\label{remaTheorSalinasBoundedSectors}
The problem of quasianalyticity for classes of functions with uniformly bounded derivatives in bounded regions has also been treated. In the works of K. V. Trunov and R. S. Yulmukhametov~\cite{Yulmukhametov,TrunovYulmukhametov} a characterization is given, for a convex bounded region containing 0 in its boundary, in terms of the sequence $\M$ and of the way the boundary approaches 0. In particular, for bounded sectors, if $\gamma\le 1$, $d\in\R$ and $r>0$, it turns out that the class $\mathcal{A}_{\M}(S(d,{\gamma},r))$ is quasianalytic precisely when condition (ii) above is satisfied.
\end{rema}

The study of quasianalyticity for the classes of functions with uniform $\M$-asymptotic expansion in an unbounded sector rests on the following statement by B. I. Mandelbrojt.

\begin{theo}[\cite{Mandelbrojt},\ Section\ 2.4.III]\label{theo.Mandelbrojt}
 Let $\M$ be given, $H=\{z\in\C: \Re(z)>0 \}$ and $\ga>0$. The following statements are equivalent:
 \begin{enumerate}[(i)]
  \item If $f\in \mathcal{H}(H)$ and there exist $A,C>0$ such that
  \begin{equation}\label{eqBoundsTheorMandel}
  |f(z)|\leq \frac{CA^pM_p}{|z|^{\ga p}}, \qquad z\in H, \quad p\in \N_0,
  \end{equation}
  then $f$ identically vanishes.
  \item $\displaystyle \sum_{p=0}^{\oo} \left(\frac{1}{m_p} \right)^{1/\ga}$ diverges.
 \end{enumerate}
\end{theo}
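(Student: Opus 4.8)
The plan is to prove the equivalence between the quasianalyticity-type statement (i) and the divergence condition (ii) of Theorem~\ref{theo.Mandelbrojt} by converting the half-plane estimates \eqref{eqBoundsTheorMandel} into a classical question about the decay of boundary values of a bounded holomorphic function, which is then controlled by the associated weight function. First I would introduce the \emph{associated function} of the sequence,
$$
h_{\M}(t):=\inf_{p\in\N_0} M_p\,t^p,\qquad t>0,
$$
so that the family of estimates $|f(z)|\le CA^p M_p/|z|^{\ga p}$ for all $p\in\N_0$ is equivalent to the single bound $|f(z)|\le C\,h_{\M}\!\big(A/|z|^{\ga}\big)^{-1}$-type expression; more precisely, taking the infimum over $p$ shows that $|f(z)|$ is controlled by $\exp\!\big(-\omega_{\M}(|z|^{\ga}/A)\big)$, where $\omega_{\M}(s)=\log\big(1/h_{\M}(1/s)\big)$ is the usual weight (counting) function attached to $\M$. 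Under the standing hypothesis that $\M$ is (lc) with $m_p\to\oo$, this weight is a genuine increasing function and the passage between the discrete estimates and $\omega_{\M}$ is reversible, so (i) becomes the assertion that a bounded holomorphic function on $H$ whose modulus on $H$ decays like $\exp(-\omega_{\M}(|z|^{\ga}/A))$ as $z\to 0$ must vanish identically.

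Next I would set up the direction (ii) $\Rightarrow$ (i), which is the substantive implication. The idea is the standard \PL/Carleman route: map the right half-plane $H$ to a standard domain and apply a log-subharmonicity argument to $\log|f|$. Concretely, I would consider the behaviour of $f$ near the origin, which is the only accumulation point of the estimate, and reduce to showing that the integrability of $\log|f|$ fails unless $f\equiv 0$. The divergence of $\sum_p (1/m_p)^{1/\ga}$ is exactly the Carleman-type condition guaranteeing that the harmonic majorant forcing $f$ to zero is available; the exponent $1/\ga$ reflects the opening of the half-plane after the conformal change $z\mapsto z^{\ga}$ (or equivalently the rescaling of the weight $\omega_{\M}(s^{1/\ga})$). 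The cleanest execution is to invoke a Levinson--Sjöberg or classical Denjoy--Carleman mechanism: the function $t\mapsto \log(1/h_{\M}(t))=\omega_{\M}(1/t)$ governs the maximal allowed decay, and the series $\sum_p (1/m_p)^{1/\ga}$ is, by the integral expression $\int_1^\oo \omega_{\M}(t^{\ga})\,t^{-2}\,dt$ (up to comparison), precisely the divergence of the logarithmic integral $\int \log(1/M(r))\,d\mu(r)$ controlling the half-plane. When this integral diverges, any bounded $f$ with the prescribed decay is forced to be $0$; when it converges, one constructs an explicit nonzero $f$ (a product or a Laplace/Fourier-type transform against a measure tailored to $\M$) realizing the decay, giving $\neg$(ii) $\Rightarrow$ $\neg$(i).

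For the converse (i) $\Rightarrow$ (ii), equivalently $\neg$(ii) $\Rightarrow \neg$(i), I would explicitly exhibit a nontrivial $f\in\mathcal{H}(H)$ satisfying \eqref{eqBoundsTheorMandel} when the series converges. The natural candidate is built from the associated function: one takes $f(z)=\exp\big(-c\,G(z)\big)$ or an infinite product whose zero-free analogue has modulus comparable to $h_{\M}(1/|z|^{\ga})$, using convergence of $\sum_p(1/m_p)^{1/\ga}$ to guarantee that the defining integral/product converges and defines a bounded holomorphic function on $H$ that does not vanish identically. The main obstacle I expect is precisely this quantitative equivalence between the combinatorial series $\sum_p (1/m_p)^{1/\ga}$ and the analytic (logarithmic-integral) convergence condition on $\omega_{\M}$: translating the discrete Carleman sum into the correct integral with the exponent $1/\ga$ and keeping the constants $A,C$ under control through the conformal rescaling $z\mapsto z^{\ga}$ requires the (lc) and $m_p\to\oo$ hypotheses to ensure $\omega_{\M}$ is well behaved, and this step is where the proof in \cite{Mandelbrojt} does its real work; everything else is the classical \PL\ machinery applied on the transformed domain.
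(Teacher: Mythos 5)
You should first be aware that the paper does not prove this statement at all: it is quoted directly from Mandelbrojt's monograph (Section 2.4.III) and used as a black box, the only original content nearby being the observation that the change of variable $g(z)=f(1/z^{1/\ga})$ turns functions satisfying \eqref{eqBoundsTheorMandel} into flat elements of $\tilde{\mathcal{A}}^u_{\M}(S_\ga)$, which is what yields Corollary~\ref{corollary.Watson.Lemma.Uniform.Bounds}. So there is no in-paper proof to compare against; your proposal must be measured against the classical argument, and in outline it does match it: the reduction of the family of estimates to the single bound $|f(z)|\le C e^{-M(|z|^{\ga}/A)}$ (your $\omega_{\M}$ is exactly the paper's function $M$), the logarithmic-integral criterion $\int_{\R}\log|f(iy)|(1+y^2)^{-1}\,dy>-\oo$ for a nonzero bounded holomorphic function on $H$, the comparison of $\sum_p(1/m_p)^{1/\ga}$ with $\int_1^{\oo}M(t^{\ga})\,t^{-2}\,dt$, and, for (i)$\Rightarrow$(ii), the construction of an outer function with prescribed boundary decay. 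That is the standard route by which the result is established.

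There is, however, one concrete error you should fix: you locate the action at the origin (``the behaviour of $f$ near the origin, which is the only accumulation point of the estimate''). This is backwards. For fixed $p$ the bound $CA^pM_p|z|^{-\ga p}$ blows up as $z\to 0$; near the origin the estimates say nothing beyond $|f|\le C$ (the case $p=0$). Taking the infimum over $p$, the hypothesis is equivalent to $|f(z)|\le Ce^{-M(|z|^{\ga}/A)}$, which forces super-polynomial decay as $|z|\to\oo$ in $H$; it is precisely this decay at infinity that feeds the logarithmic integral (whose convergence or divergence is decided at $y\to\pm\oo$) and that corresponds, under $z\mapsto 1/z^{1/\ga}$, to $\M$-flatness at the vertex of $S_\ga$, as the paper notes immediately after the statement. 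As literally written, your second paragraph is internally inconsistent: a log-integral argument set up ``near the origin'' yields nothing, while the integral $\int_1^{\oo}\omega_{\M}(t^{\ga})\,t^{-2}\,dt$ that you invoke is an integral at infinity. Beyond this, the two quantitatively delicate points --- the equivalence of the series with the logarithmic integral, and, in the converse direction, passing from the prescribed boundary decay of the outer function to interior estimates of the form \eqref{eqBoundsTheorMandel} (a Poisson-kernel lower bound that uses the monotonicity and (lc)-regularity of $M$) --- are left as black boxes; you acknowledge this, but they are where the real work of Mandelbrojt's proof lies, so the proposal is a correct road map rather than a proof.
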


Observe that a function $f$ is holomorphic in $H$ and verifies the estimates \eqref{eqBoundsTheorMandel} if, and only if, the function $g$ given by $g(z):=f(1/z^{1/\ga})$ belongs to $\tilde{\mathcal{A}}_{\M}^u(S_{\ga})$ and is flat. From this fact and the first equality in \eqref{eqFormulaExponentConvergence}, it is immediate to deduce the next characterization.

\begin{coro}[generalized Watson's lemma for uniform asymptotics] \label{corollary.Watson.Lemma.Uniform.Bounds}
Let $\M$ and $\ga>0$ be given. The following are equivalent:
\begin{enumerate}[(i)]
 \item $\mathcal{\tilde{A}}^u_{\M}(S_\ga)$ is quasianalytic.
 \item $\displaystyle \sum_{p=0}^{\infty}\Big(\frac{1}{m_{p}}\Big)^{1/\ga}$ diverges.
 \item $\ga>\o(\M)$, or $\ga=\o(\M)$ and $\displaystyle \sum_{p=0}^{\infty}\Big(\frac{1}{m_{p}}\Big)^{1/\o(\M)}$ diverges.
\end{enumerate}
\end{coro}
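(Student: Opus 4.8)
The plan is to reduce the whole statement to Mandelbrojt's Theorem~\ref{theo.Mandelbrojt} together with the computation of the exponent of convergence recorded in \eqref{eqFormulaExponentConvergence}, treating the two equivalences (i)$\Leftrightarrow$(ii) and (ii)$\Leftrightarrow$(iii) separately. The first rests on the change of variable indicated just before the statement, and the second is a purely arithmetic comparison of a series with its critical exponent.

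For (i)$\Leftrightarrow$(ii) I would argue through the correspondence $f\leftrightarrow g$, $g(z):=f(1/z^{1/\ga})$, already noted. First one checks that $z\mapsto 1/z^{1/\ga}$ maps $S_\ga$ biholomorphically onto $H=\{z:\Re(z)>0\}$: if $\arg(z)\in(-\ga\pi/2,\ga\pi/2)$ then $\arg(1/z^{1/\ga})\in(-\pi/2,\pi/2)$. Then, writing $w=1/z^{1/\ga}$, so that $|w|^{\ga p}=|z|^{-p}$, the Mandelbrojt bound $|f(w)|\le CA^pM_p/|w|^{\ga p}$ on $H$ becomes exactly the flatness estimate $|g(z)|\le CA^pM_p|z|^p$ for all $p\in\N_0$, that is $g\sim_\M\hat{0}$ in the sense of Definition~\ref{defiUniformMAsympExpan}. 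Hence the nonzero holomorphic functions on $H$ satisfying \eqref{eqBoundsTheorMandel} are in bijection with the nontrivial flat functions of $\tilde{\mathcal{A}}^u_{\M}(S_\ga)$. Since quasianalyticity of $\tilde{\mathcal{A}}^u_{\M}(S_\ga)$ means precisely the absence of such nontrivial flat functions, it is equivalent to the vanishing conclusion in part (i) of Theorem~\ref{theo.Mandelbrojt}, which that theorem identifies with the divergence of $\sum_{p\ge0}(1/m_p)^{1/\ga}$, namely (ii).

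For (ii)$\Leftrightarrow$(iii) I would invoke the first identity in \eqref{eqFormulaExponentConvergence}, $\lambda_{(m_p)}=1/\o(\M)$, and apply the defining dichotomy of the exponent of convergence to $\mu=1/\ga$: the series $\sum_{p\ge0}1/m_p^{\mu}$ diverges for every $\mu<\lambda_{(m_p)}$ and converges for every $\mu>\lambda_{(m_p)}$. Translating $\mu=1/\ga$ back into $\ga$, divergence holds automatically when $\ga>\o(\M)$ and fails when $\ga<\o(\M)$, so the only case in which the exponent of convergence does not settle the question is the critical one $\ga=\o(\M)$, where $\mu=\lambda_{(m_p)}$ and divergence must be imposed by hand; this is exactly the remaining clause of (iii), noting that $1/\ga=1/\o(\M)$ makes the two displayed series coincide. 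Collecting the three cases yields the equivalence.

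The main, and essentially only, delicate point is the critical regime $\ga=\o(\M)$: by definition of the exponent of convergence the series is indeterminate there, so neither implication can be read off from $\lambda_{(m_p)}$ alone and one must retain the actual series as in the statement. Everything else is a routine verification of the change of variables and of the two limiting inequalities for $\mu=1/\ga$.
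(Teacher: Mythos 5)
Your proposal is correct and takes essentially the same route as the paper: the change of variable $g(z)=f(1/z^{1/\ga})$ identifying nontrivial flat functions of $\tilde{\mathcal{A}}^u_{\M}(S_\ga)$ with nonzero functions on $H$ satisfying \eqref{eqBoundsTheorMandel}, so that Mandelbrojt's Theorem~\ref{theo.Mandelbrojt} gives (i)$\Leftrightarrow$(ii), followed by the identity $\lambda_{(m_p)}=1/\o(\M)$ from \eqref{eqFormulaExponentConvergence} and the convergence dichotomy of the exponent of convergence for (ii)$\Leftrightarrow$(iii). You have simply made explicit the verifications (the bijection $S_\ga\leftrightarrow H$, the transformation of the estimates, and the treatment of the critical case $\ga=\o(\M)$) that the paper declares immediate.
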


\begin{rema}\label{remaQuasianalUniformAsymptBoundedSectors}
This theorem holds true for bounded sectors $S(0,\ga,r)$ with similar arguments. Proceeding as in~\cite[Theorem\ 2.19]{JimenezSanz}, we only need to modify the proof of (ii)$\Rightarrow$(i) by considering the transformation $w(z)=1/(z + (1/r)^{1/\ga})^{\ga} $, which maps $H$ into a region $D$ contained in $S(0,\ga,r)$: given a flat function $g\in  \mathcal{\tilde{A}}^u_{\M}(S(0,\ga,r))$, the function $f(w):=g(z(w))$ is defined in $H$ and, by Mandelbrojt's theorem, it identically vanishes.
\end{rema}

Regarding the class of functions with (non-uniform) asymptotic expansion in a sectorial region $G$, we first express flatness in $\tilde{\mathcal{A}}_{\M}(G)$ by means of an auxiliary function: For $t>0$ we define
\begin{equation*}\label{equadefiMdet}
M(t):=\sup_{p\in\N_{0}}\log\big(\frac{t^p}{M_{p}}\big)=\left \{ \begin{matrix}  p\log t -\log(M_{p}) & \mbox{if }t\in [m_{p-1},m_{p}),\ p=1,2,\ldots,\\
0 & \mbox{if } t\in [0,m_{0}), \end{matrix}\right.
\end{equation*}
which is a non-decreasing continuous map in $[0,\infty)$ with $\lim_{t\to\infty}M(t)=\infty$. Then, we have the following result.

\begin{theo}[\cite{ThilliezSmooth}, Proposition 4]\label{theo.thillez.flat.function.M}
Given $f\in\mathcal{H}(G)$, the following are equivalent:
\begin{enumerate}[(i)]
 \item $f\in\tilde{\mathcal{A}}_{\M}(G)$ and $f$ is flat.
 \item For every bounded proper subsector $T$ of $G$ there exist $c_1,c_2>0$ with
 $$|f(z)|\le c_1e^{-M(1/(c_2|z|))},\qquad z\in T. $$
\end{enumerate}
\end{theo}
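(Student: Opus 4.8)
The whole statement rests on the elementary duality between the sequence $\M$ and its associated function $M(t)=\sup_{p\in\N_0}\log(t^p/M_p)$. By definition one has $e^{M(t)}=\sup_{p\in\N_0}t^p/M_p$ for every $t>0$, and passing to reciprocals gives the identity
\begin{equation*}
e^{-M(t)}=\inf_{p\in\N_0}\frac{M_p}{t^p},\qquad t>0,
\end{equation*}
which I would isolate at the outset. Both implications will be nothing more than a reformulation through this identity, carried out subsector by subsector; note that the standing hypothesis $m_p\to\oo$ guarantees $\lim_{t\to\oo}M(t)=\oo$, so that the right-hand bound in (ii) genuinely forces $f$ to decay as $z\to 0$.

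For (i)$\Rightarrow$(ii), I would fix a bounded proper subsector $T\ll G$ and use that flatness, $f\sim_{\M}\hat{0}$, means precisely that there are $C_T,A_T>0$ with $|f(z)|\le C_TA_T^pM_p|z|^p$ for all $p\in\N_0$ and all $z\in T$ (the partial sums vanish since all coefficients are zero). Taking the infimum over $p$ and writing $A_T^pM_p|z|^p=M_p/t^p$ with $t=1/(A_T|z|)$, the identity above turns this into $|f(z)|\le C_T\,e^{-M(1/(A_T|z|))}$, which is exactly (ii) with $c_1=C_T$ and $c_2=A_T$.

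For (ii)$\Rightarrow$(i), I would run the same computation backwards. Fixing $T\ll G$ and the corresponding constants $c_1,c_2$ from (ii), I would discard the infimum and keep a single exponent: since $e^{-M(t)}\le M_p/t^p$ for every individual $p$ (this is just the defining supremum), taking $t=1/(c_2|z|)$ yields
\begin{equation*}
|f(z)|\le c_1e^{-M(1/(c_2|z|))}\le c_1c_2^pM_p|z|^p,\qquad z\in T,\ p\in\N_0.
\end{equation*}
These are precisely the defining estimates of $\tilde{\mathcal{A}}_{\M}(G)$ for the zero series, so $f\in\tilde{\mathcal{A}}_{\M}(G)$ and $\tilde{\mathcal{B}}(f)=\hat{0}$, i.e.\ $f$ is flat. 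Since nothing here is delicate, there is no genuine obstacle: the only points demanding care are the correct matching of constants with each subsector $T$ and the clean statement of the duality identity, which is where log-convexity of $\M$ conceptually sits even though the two bounds actually used ($e^{M(t)}\ge t^p/M_p$ and its infimum form) follow directly from the definition of $M(t)$.
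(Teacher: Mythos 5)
Your proof is correct: the equivalence is exactly the elementary duality $e^{-M(t)}=\inf_{p\in\N_0}M_p/t^p$ applied subsector by subsector, with the substitution $t=1/(A_T|z|)$, and the constants transfer verbatim in both directions. Note that the paper gives no proof of its own here (it quotes the statement from Thilliez's Proposition 4), so there is nothing to diverge from; your argument is the standard one, and your closing observation is also accurate — log-convexity is not needed for this equivalence, only the standing assumption $m_p\to\oo$ ensuring that $M(t)$ is finite, which is in force throughout the paper.
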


\begin{rema}\label{rema.bounded.function.null.asymp.expan.dir}
In the conditions of Definition~\ref{defiMAsympExpanDirec}, if $\hat{f}$ is the null series we say that $f$ is $\M$-\emph{flat in direction} $\theta$. As in the previous statement, this amounts to the existence of $r_\theta,C_\theta,A_\theta>0$ such that the segment $(0,r_\theta e^{i\theta}]$ is contained in $G$, and for every $z\in (0,r_\theta e^{i\theta}]$ one has
\begin{equation*}%\label{eqCharactMFlatDirec}
|f(z)|\le C_\theta e^{-M(1/(A_\theta |z|))}.
\end{equation*}
Suppose moreover that $f$ is bounded throughout the (bounded or not) sectorial region $G$. Since the function  $e^{-M(t)}$ is non-increasing in $[0,\infty)$, it is obvious that $f$ is $\M$-flat in direction $\theta$ if, and only if, there exist $\tilde{C}_\theta>0$ and the same constant $A_\theta>0$ as before, such that for every $z\in G$ with $\arg(z)=\theta$ one has
\begin{equation}\label{eqMFlatDirecBoundedFunction}
|f(z)|\le \tilde{C}_\theta e^{-M(1/(A_\theta |z|))}.
\end{equation}
This fact will be used later on.
\end{rema}

\subsection{Quasianalyticity results via proximate orders}\label{subsectQuasianalResultsProxOrders}

An easy characterization of quasianalyticity in the classes $\tilde{\mathcal{A}}_\M(G)$ may be given thanks to the notion of proximate order, appearing in the theory of
growth of entire functions and developed, initially, by E. Lindel\"of and G. Valiron. We will focus our discussion
mainly on the results given by L. S. Maergoiz (see~\cite{Maergoiz}).

\begin{defi}\label{OAD:1}
We say a real function $\ro(r)$, defined on $(c,\oo)$ for some $c\ge 0$, is a {\it proximate order},
if the following hold:
 \begin{enumerate}[(A)]
  \item $\ro$ is continuous and piecewise continuously differentiable in $(c,\oo)$
  (meaning that it is differentiable except possibly at a sequence of points, tending to infinity, at any
  of which it is continuous and has distinct finite lateral derivatives),\label{OA1:1}
  \item $\ro(r) \geq 0$ for every $r>c$,\label{OA2:1}
  \item $\lim_{r \ri \oo} \ro(r)=\ro< \oo$, \label{OA3:1}
  \item $\lim_{r  \ri \oo} r \ro'(r) \ln r = 0$. \label{OA4:1}
 \end{enumerate}
 In case the value $\rho$ in (C) is positive (respectively, is 0), we say $\rho(r)$ is a \textit{nonzero}
 (resp. \textit{zero}) proximate order.
\end{defi}

\begin{rema}\label{rema.V.order.less.ro}
If $\ro(r)$ is a proximate order with limit $\ro$ at infinity  and $\tau>\ro$, then there exists $r(\tau)>1$ such that
$\ro(r)<\tau$ for $r>r(\tau)$ and, consequently,
$$r^{\ro(r)}<r^{\tau}, \qquad r>r(\tau).$$
\end{rema}

We now associate to a nonzero proximate order a class of functions with nice properties, which will play a prominent role in our \PL\ result.

\begin{theo}[\cite{Maergoiz}, Theorem\ 2.4]\label{propanalproxorde}
Let $\ro(r)$ be a nonzero proximate order such that $\lim_{r \ri \oo} \ro(r)=\ro$. For every $\ga>0$ there exists an analytic
function $V(z)$ in $S_\ga$ such that:
  \begin{enumerate}[(I)]
  \item  For every $z \in S_\ga$,
 \begin{equation*}
    \lim_{r \to \infty} \frac{V(zr)}{V(r)}= z^{\ro},
  \end{equation*}
uniformly in the compact sets of $S_\ga$.
\item $\overline{V(z)}=V(\overline{z})$ for every $z \in S_\ga$ (where, for $z=(|z|,\arg(z))$, we put $\overline{z}=(|z|,-\arg(z))$).
\item $V(r)$ is positive in $(0,\infty)$, monotone increasing and $\lim_{r\to 0}V(r)=0$.
\item The function $t\in\R\to V(e^t)$ is strictly convex (i.e. $V$ is strictly convex relative to $\log(r)$).
\item The function $\log(V(r))$ is strictly concave in $(0,\infty)$.
\item  The function $\ro_V(r):=\log( V(r))/\log(r)$, $r>0$, is a proximate order equivalent to $\ro(r)$, i.e.,
$$\lim_{r\ri\oo} V(r)/r^{\ro(r)}=\lim_{r\ri\oo} r^{\ro_V(r)} / r^{\ro(r)} = 1.$$
    \end{enumerate}
\end{theo}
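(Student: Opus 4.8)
The plan is to pass to the logarithmic variable and reduce the whole statement to the construction of a single holomorphic function on a horizontal strip. Writing $w=\log z$, the sector $S_\gamma$ is mapped biholomorphically onto the strip $\Sigma_\gamma=\{w:|\Im w|<\gamma\pi/2\}$ and the positive axis onto $\R$, so I would seek $V$ in the form $V(z)=\exp\big(\Phi(\log z)\big)$ with $\Phi$ holomorphic on $\Sigma_\gamma$, real on $\R$, and with real restriction $v(t):=\Phi(t)$ a smoothed primitive of the data carried by $\rho$. Setting $\sigma(t):=\rho(e^t)$, condition (D), $r\rho'(r)\log r\to0$, becomes $t\sigma'(t)\to0$, and the naive comparison function $V_0(r)=r^{\rho(r)}$ has $\log V_0(e^t)=t\sigma(t)$ with logarithmic derivative $v_0'(t)=\sigma(t)+t\sigma'(t)\to\rho$ by (C) and (D). Thus conditions (A)--(D) are exactly what makes $V_0$ regularly varying of index $\rho$, and everything then rests on producing a holomorphic, convexity-correct regularization of it.

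\emph{The real-variable step.} I would construct $v\in C^\infty(\R)$ with $v'(t)\to\rho$, $v''(t)\to0$ and $v(t)-t\sigma(t)\to0$, the last being the normalization giving (VI), since $V(r)/r^{\rho(r)}=\exp\big(v(\log r)-\log r\,\rho(r)\big)$. This is the classical Lindel\"of--Valiron replacement of a proximate order by an asymptotically equivalent smooth comparison function, obtained by mollifying $v_0'$ and fixing the integration constant. Once $v'\to\rho>0$ and $v''\to0$, the two convexity requirements are sign checks: since $V(e^t)=e^{v(t)}$ one has $\tfrac{d^2}{dt^2}V(e^t)=(v''+(v')^2)e^{v}$, positive for large $t$ (here the hypothesis $\rho>0$ is what dominates $v''$), giving (IV); and since $\log V(r)=v(\log r)$ one computes $\tfrac{d^2}{dr^2}\log V(r)=r^{-2}(v''-v')$, negative for large $t$, giving (V). The remaining freedom for small $r$, i.e.\ for $t\to-\infty$ where $\rho$ is not defined, I would use to force $v'>0$ everywhere and $v(t)\to-\infty$, securing both the monotonicity and the limit $V(r)\to0$ of (III) and the global validity of (IV)--(V).

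\emph{The holomorphic step.} I would carry out the regularization with a holomorphic mollifier, say the Gaussian $\tfrac{1}{\sqrt{\pi\varepsilon}}e^{-s^2/\varepsilon}$, whose entire extension lets me define $\mu:=\rho+(v_0'-\rho)*\tfrac{1}{\sqrt{\pi\varepsilon}}e^{-s^2/\varepsilon}$ as a holomorphic function on all of $\Sigma_\gamma$ with $\mu|_\R=v'$. Putting $\Phi(w)=\int_{w_0}^w\mu$ and $V(z)=e^{\Phi(\log z)}$, property (II) is immediate because $\mu$ is real on $\R$, whence $\Phi(\bar w)=\overline{\Phi(w)}$. For (I), in logarithmic coordinates
\[
\log\frac{V(zr)}{V(r)}=\Phi(w+t)-\Phi(t)=\int_0^w\mu(t+\xi)\,d\xi,\qquad w=\log z,\ t=\log r,
\]
so it suffices that $\mu(t+\xi)\to\rho$ as $t\to\infty$ uniformly for $\xi$ in a bounded set; the integral over the bounded segment then tends to $\rho\,w=\rho\log z$ uniformly on compact subsets of $\Sigma_\gamma$, which is precisely (I).

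The main obstacle is this last uniformity: controlling $\mu$ in the \emph{imaginary} direction on a strip of half-width $\gamma\pi/2$ for \emph{arbitrary} $\gamma>0$, so that $\mu(w)\to\rho$ uniformly for $|\Im w|\le\gamma\pi/2$ and not merely on the real axis. With the Gaussian this reduces to the estimate $|\mu(t+i\theta)-\rho|\le e^{\theta^2/\varepsilon}\int_\R|v_0'(s)-\rho|\,\tfrac{1}{\sqrt{\pi\varepsilon}}\,e^{-(t-s)^2/\varepsilon}\,ds$, where the prefactor is a constant on the strip and the integral tends to $0$ because $v_0'-\rho\to0$ where the kernel concentrates; here is exactly where (D), through $t\sigma'(t)\to0$, must be used quantitatively to guarantee $v'\to\rho$ and hence the vanishing of the vertical oscillation of $\mu$. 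A secondary difficulty is making the real-variable normalization deliver (III)--(VI) \emph{simultaneously and globally}, in particular forcing the mollification error in $v(t)-t\sigma(t)$ to tend to $0$ (not merely stay bounded), so that the limit in (VI) is an honest equality to $1$; this is settled by the classical comparison-function argument together with the choice of integration constant.
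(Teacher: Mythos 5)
First, be aware that the paper contains no proof of this statement: it is quoted verbatim from \cite{Maergoiz} (Theorem 2.4) and used as a black box, so your attempt can only be assessed on its own merits, not against an internal argument. Your scheme --- passing to $w=\log z$, seeking $V=\exp(\Phi(\log z))$ with $\Phi'=\mu:=\rho+h*G_\varepsilon$, where $h:=v_0'-\rho$ and $G_\varepsilon$ is the entire Gaussian kernel, and controlling the strip through $|e^{-(t+i\theta-s)^2/\varepsilon}|=e^{\theta^2/\varepsilon}e^{-(t-s)^2/\varepsilon}$ --- is viable, and your treatment of (I), (II) and of the normalization $v-v_0\to0$ needed for (VI) is essentially correct: the prefactor $e^{\theta^2/\varepsilon}$ is constant on the strip, $h\to0$ at $+\infty$ forces $\mu\to\rho$ uniformly there, and the convergence of $\int^{\infty}(h*G_\varepsilon-h)$, hence the existence of the right integration constant, does follow from the Fubini-type argument you invoke.

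The genuine gap lies in your claim that the global validity of (III)--(V) can be secured using only ``the remaining freedom for small $r$, i.e.\ for $t\to-\infty$ where $\rho$ is not defined''. The problem is not located at $t\to-\infty$: conditions (C) and (D) of Definition~\ref{OAD:1} are purely asymptotic, so at moderate $t$, well inside the domain of $\rho$, the function $v_0'(t)=\sigma(t)+t\sigma'(t)$ is essentially arbitrary. Concretely, let $\sigma\equiv\rho+\Delta$ up to $t_1$, decrease smoothly to $\rho$ on $[t_1,t_1+1]$, and equal $\rho$ afterwards: this satisfies (A)--(D), yet $h\approx-t_1\Delta$ on an interval of unit length, and since your mollification width $\sqrt{\varepsilon}$ is fixed, for $t_1\Delta$ large enough $v'=\rho+h*G_\varepsilon$ becomes negative near $t_1$. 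Then $V$ is not increasing on $(0,\infty)$, so (III) fails, and your sign criteria $v''+(v')^2>0$ and $v''-v'<0$ for (IV) and (V) fail there as well; no choice of the extension of $\rho$ to the left can cure a defect created by the given data at moderate $t$. The repair is simple but must be part of the proof: only the germ of $\rho$ at $+\infty$ enters (I) and (VI), so replace $h$ by $h\chi_{[T,\infty)}$ with $T$ so large that $\sup_{t\ge T}|h|\le\delta$. Then $\|v'-\rho\|_\infty\le\delta$ and $\|v''\|_\infty\le\delta\,\|G_\varepsilon'\|_{L^1(\R)}$, so for $\delta$ small (depending only on $\rho$ and $\varepsilon$) the three strict sign conditions $v'>0$, $v''+(v')^2>0$, $v''<v'$ hold on all of $\R$; moreover $v'\to\rho>0$ as $t\to-\infty$, so $v\to-\infty$ and $V(r)\to0$ as $r\to0$, while nothing changes at $+\infty$. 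A last, minor omission: (VI) also asserts that $\rho_V$ is itself a proximate order, which you never check; with your notation it is one line, namely $r\rho_V'(r)\log r=v'(t)-v(t)/t\to\rho-\rho=0$ and $\rho_V(r)=v(t)/t\to\rho>0$ for $t=\log r\to\infty$.
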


Given $\ga>0$ and $\ro(r)$ as before, $MF(\ga,\ro(r))$ will denote the set of Maergoiz functions $V$ defined in $S_{\ga}$ and satisfying the conditions (I)-(VI) of Theorem~\ref{propanalproxorde}.

Before returning to the study of quasianalyticity, we indicate how to go from sequences to proximate orders (for more information on this relation and its reversion, see~\cite{JimenezSanzSchindl}). Given $\M$ and its associated function $M(t)$, for $t$ large enough we can consider $$
d_{\M}(t):=\log(M(t))/\log(t).
$$
The following result characterizes those sequences for which $d_{\M}(t)$ is a proximate order.

\begin{theo}[\cite{JimenezSanzSchindl},\, Theorem 3.6]\label{theorem.charact.prox.order.nonzero}
Let $\M$ be given. The following are equivalent:
\begin{enumerate}[(a)]
 \item $d_{\M}(t)$ is a proximate order with $\lim_{t\to\infty}d_{\M}(t)\in(0,\infty)$.
 \item There exists $\lim_{p\to\infty} \log\big(m_p/M_p^{1/p}\big)\in(0,\infty)$.
 \item $\m$ is regularly varying with a positive index of regular variation.
 \item There exists $\o>0$ such that for every natural number $\ell\geq 2$,
 $$\lim_{p\ri\oo} \frac{m_{\ell p}}{m_p}=\ell^\o.$$
\end{enumerate}
In case any of these statements holds, the value of the limit mentioned in (b), that of the index mentioned in (c), and that of the constant $\o$ in (d) is $\o(\M)$,
and the limit in (a) is $1/\o(\M)$.
\end{theo}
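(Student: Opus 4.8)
The plan is to treat the three conditions (b), (c), (d) as statements about the regular variation of the nondecreasing sequence $\m$ (recall that (lc) makes $\m$ nondecreasing) and to connect them with the proximate-order statement (a) through the continuous counting function attached to $M(t)$. Throughout I would use the dictionary $\log M_p=\sum_{j=0}^{p-1}\log m_j$ together with the step function $\nu(t):=\#\{p\in\N_0:m_p\le t\}$, for which one checks directly from the explicit formula for $M(t)$ that $tM'(t)=\nu(t)$ on each interval $(m_{p-1},m_p)$ and hence $M(t)=\int_{m_0}^t \nu(s)/s\,ds+M(m_0)$. Note also that whenever $\m$ is regularly varying of index $\o$ one has $\log m_p\sim\o\log p$, so $\o(\M)=\liminf_p \log(m_p)/\log(p)=\o$; this already forces every limit occurring in the statement to equal $\o(\M)$, respectively $1/\o(\M)$ for (a).

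For the equivalences among (b), (c), (d) I would run the cycle (c)$\Rightarrow$(b)$\Rightarrow$(d)$\Rightarrow$(c). Writing $a_p=\log m_p$ and $b_p=a_p-\tfrac1p\sum_{j<p}a_j=\log(m_p/M_p^{1/p})$: if $\m$ is regularly varying of index $\o$, the uniform convergence theorem for regular variation gives $a_p-a_{\lfloor\theta p\rfloor}\to-\o\log\theta$ uniformly on compact $\theta$-sets, and recognizing $b_p=\tfrac1p\sum_{j<p}(a_p-a_j)$ as a Riemann sum yields $b_p\to\o\int_0^1(-\log\theta)\,d\theta=\o$, which is (b). Conversely, if $b_p\to\o\in(0,\oo)$, then writing $a_{\ell p}-a_p=\sum_{q=p}^{\ell p-1}b_q/(q+1)+(b_{\ell p}-b_p)$ and applying a Toeplitz/Ces\`aro argument (using $\sum_{q=p}^{\ell p-1}1/(q+1)\to\log\ell$) gives $a_{\ell p}-a_p\to\o\log\ell$, i.e. $m_{\ell p}/m_p\to\ell^{\o}$, which is (d). Finally (d)$\Rightarrow$(c) is the upgrade from integer dilation ratios to genuine regular variation: since $\m$ is monotone one first gets all rational dilations from the integer ones, and then squeezes an arbitrary real $\lambda$ between rationals.

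To link these with (a) I would pass to the continuous side. Regular variation of $\m$ of index $\o$ is equivalent, via the generalized-inverse relationship between $\m$ and $\nu$, to regular variation of $\nu$ of index $1/\o$; Karamata's integration theorem then gives $M(t)=\int^t\nu(s)/s\,ds\sim\o\,\nu(t)$, so $M$ is regularly varying of index $1/\o$ and $\nu(t)/M(t)\to1/\o$. Since $M(t)=t^{d_{\M}(t)}$, the Valiron correspondence between regularly varying functions and proximate orders says precisely that $M$ is regularly varying of index $1/\o$ if and only if $d_{\M}$ is a proximate order with limit $1/\o$. The only non-formal requirement to verify is (D) of Definition~\ref{OAD:1}, and where $M$ is differentiable one computes
\begin{equation*}
t\,d_{\M}'(t)\log t=\frac{tM'(t)}{M(t)}-\frac{\log M(t)}{\log t}=\frac{\nu(t)}{M(t)}-d_{\M}(t)\longrightarrow \frac1\o-\frac1\o=0,
\end{equation*}
so (D) holds; conversely, if $d_{\M}$ is a proximate order then (D) forces exactly these asymptotics, and the monotone density theorem lets one read regular variation of $\nu$, hence of $\m$, back off from that of $M$.

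The main obstacle I anticipate lies in the (a) equivalence rather than in the sequence manipulations. On one hand, $M(t)$ is only piecewise continuously differentiable, with corners at the points $m_p$, so condition (D) must be handled through lateral derivatives and the breakpoints kept under control; this is where the standing hypothesis $\lim_p m_p=\oo$ and the monotonicity of $\nu$ are essential. On the other hand, the passage $M\leadsto\nu\leadsto\m$ needed for (a)$\Rightarrow$(c) relies on the monotone density theorem and on the quantitative form of the Valiron correspondence, both of which require care when applied to a merely piecewise-smooth $M$. By contrast, the (c)$\Leftrightarrow$(b)$\Leftrightarrow$(d) loop is comparatively routine once the uniform convergence and integration theorems for regularly varying functions are invoked.
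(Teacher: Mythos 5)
This theorem is quoted, not proved, in the paper: it is imported verbatim from \cite{JimenezSanzSchindl} (Theorem 3.6 there), so there is no internal proof to compare against and your argument has to stand on its own. Its overall architecture is reasonable and uses the right tools: the dictionary $tM'(t)=\nu(t)$ on $(m_{p-1},m_p)$, $M(t)=\int_{m_0}^{t}\nu(s)s^{-1}\,ds$, the identity $t\,d_{\M}'(t)\log t=\nu(t)/M(t)-d_{\M}(t)$, Karamata integration, the inverse-function theorem for regular variation, and Valiron's theorem (a proximate order $\ro(t)$ makes $t^{\ro(t)}$ regularly varying). Your identity $a_{\ell p}-a_p=\sum_{q=p}^{\ell p-1}b_q/(q+1)+(b_{\ell p}-b_p)$ behind (b)$\Rightarrow$(d) is correct, as is the chain (c)$\Rightarrow$(a).

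There is, however, a genuine gap in (d)$\Rightarrow$(c), precisely at the sentence ``since $\m$ is monotone one first gets all rational dilations from the integer ones''. Monotonicity alone does not give this. For $\lambda=k/\ell>1$ and a general index $p=\ell q+r$, $0\le r<\ell$, the best monotone sandwich is
\begin{equation*}
\frac{m_{kq}}{m_{\ell(q+1)}}\le\frac{m_{\lfloor\lambda p\rfloor}}{m_p}\le\frac{m_{k(q+1)}}{m_{\ell q}},
\end{equation*}
and by (d) these two bounds equal $(k/\ell)^{\o}\,(m_{q}/m_{q+1})(1+o(1))$ and $(k/\ell)^{\o}\,(m_{q+1}/m_{q})(1+o(1))$ respectively: both involve the single-step ratio $m_{q+1}/m_q$, whose convergence to $1$ is itself a consequence of regular variation --- the very thing being proved --- and is not supplied by (d) (which only controls the subsequence $p\in\ell\N$) together with monotonicity ``for free''. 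The gap is reparable inside your scheme: apply (d) to the two consecutive integers $\ell$ and $\ell+1$ to get $m_{(\ell+1)q}/m_{\ell q}\ri(1+1/\ell)^{\o}$; since for $q=\lfloor p/\ell\rfloor\ge\ell$ one has $p+1\le\ell(q+1)\le(\ell+1)q$ and $p\ge \ell q$, monotonicity gives $m_{p+1}/m_p\le m_{(\ell+1)q}/m_{\ell q}$, hence $\limsup_p m_{p+1}/m_p\le(1+1/\ell)^{\o}$ for every $\ell\ge2$, i.e. $m_{p+1}/m_p\ri1$; with this in hand your integer-to-rational-to-real squeeze does go through. A second, smaller gap: in (c)$\Rightarrow$(b) the Riemann-sum argument is not covered by the uniform convergence theorem alone, since uniformity holds only on compact subsets of $(0,1]$ while the limit function $-\o\log\theta$ is unbounded near $\theta=0$; you need Potter-type bounds $a_p-a_j\le\log C+(\o+\varepsilon)\log(p/j)$ (plus a crude bound for the finitely many smallest $j$) to make $\frac1p\sum_{j\le\delta p}(a_p-a_j)$ uniformly small as $\delta\to0$. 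With these two repairs the proposal becomes a complete proof.
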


A less restrictive condition on the sequence $\M$, namely the admissibility of a proximate order, is indeed sufficient for our purposes.

\begin{theo}[\cite{JimenezSanzSchindl},\, Theorem 4.14]\label{theo.charact.admit.p.o}
Given $\M$, %be (lc) and $\lim_{p\ri\oo} m_p=\oo$, then
the following conditions are equivalent:
\begin{enumerate}
 \item[(e)] There exists a (lc) sequence $\L$, with quotients tending to infinity, such that $\L\approx\M$ and $d_{\L}(t)$ is a nonzero proximate order.
 \item[(f)] $\M$ \emph{admits a nonzero proximate order}, i.e., there exist a nonzero proximate order $\ro(t)$
 and constants $A$ and $B$ such that
\begin{equation}\label{eqAdmitsProximateOrder}
A\leq \log(t)(\ro(t)-d_{\M}(t)) \leq B, \qquad t\textrm{ large enough}.
\end{equation}
\end{enumerate}
\end{theo}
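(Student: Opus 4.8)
The plan is to translate everything into the language of associated functions. For a (lc) sequence $\M$ (our standing class), one has the Legendre--Young duality
\[
M(t)=\sup_{p\in\N_0}\big(p\log t-\log M_p\big),\qquad \log M_p=\sup_{s>0}\big(p\log s-M(s)\big),
\]
so that $u\mapsto M(e^u)$ is a convex, piecewise linear function with integer slopes, and, for (lc) sequences, $\M\approx\L$ holds \emph{if and only if} the associated functions are comparable up to a bounded horizontal shift, i.e. $L(t/b)\le M(t)\le L(t/a)$ for suitable $0<a\le b$ and all large $t$ (this is checked directly from the two displayed formulas). The bridge to proximate orders is the tautology $M(t)=t^{d_{\M}(t)}$ together with the classical fact (see \cite{Maergoiz,JimenezSanzSchindl}) that $t^{\ro(t)}$ is regularly varying of index $\ro:=\lim_{t\to\oo}\ro(t)$ whenever $\ro(t)$ is a proximate order; in particular the ratios $W(t/c)/W(t)$ of a regularly varying $W$ tend to positive constants.

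For (e)$\Rightarrow$(f) I would simply take $\ro(t):=d_{\L}(t)$, a nonzero proximate order by hypothesis. Since $\L\approx\M$, the associated functions obey $L(t/b)\le M(t)\le L(t/a)$. Because $d_{\L}$ is a proximate order, $L(t)=t^{d_{\L}(t)}$ is regularly varying with positive index, so $L(t/a)/L(t)$ and $L(t/b)/L(t)$ converge to positive constants; dividing the chain by $L(t)$ shows that $M(t)/L(t)$ is eventually trapped between two positive constants. Taking logarithms, $\log(t)\big(d_{\L}(t)-d_{\M}(t)\big)=\log L(t)-\log M(t)$ is bounded, which is exactly \eqref{eqAdmitsProximateOrder} with $\ro=d_{\L}$.

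The substance lies in (f)$\Rightarrow$(e). Given the nonzero proximate order $\ro(t)$ from (f), I would fix any $\ga>0$ and take a Maergoiz function $V\in MF(\ga,\ro(t))$ from Theorem~\ref{propanalproxorde}; property (VI) and \eqref{eqAdmitsProximateOrder} then give $c_1V(t)\le M(t)\le c_2V(t)$ for large $t$, i.e. $M\asymp V$ multiplicatively. I then \emph{define} $\L$ as the conjugate of $g:=V\circ\exp$,
\[
\log L_p:=\sup_{t>0}\big(p\log t-V(t)\big)=g^*(p),\qquad p\in\N_0,
\]
which is finite (since $V(t)/\log t\to\oo$), convex in $p$ (so $\L$ is (lc)), with $L_0=1$. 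For the quotients $l_p=L_{p+1}/L_p$ one has $\log l_p=g^*(p+1)-g^*(p)=(g^*)'(\xi_p)=(g')^{-1}(\xi_p)$ for some $\xi_p\in(p,p+1)$; using the regular variation of $V$ and the monotone density theorem ($rV'(r)\sim\ro\,V(r)$) this yields $l_p\sim V^{-1}(p/\ro)$, a sequence regularly varying of index $1/\ro>0$ and tending to infinity. Hence, by Theorem~\ref{theorem.charact.prox.order.nonzero}, $d_{\L}(t)$ is a nonzero proximate order and $\l$ tends to infinity.

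It remains to prove $\L\approx\M$, and here I expect the main obstacle. Since $g=V\circ\exp$ is smooth and convex, the associated function $L(e^u)=\sup_{p\in\N_0}\big(pu-g^*(p)\big)$ is precisely the envelope of the supporting lines of $g$ of integer slope, whence $L(e^u)\le V(e^u)$; conversely, comparing $g$ with its integer-slope tangent at the point where the slope equals $\lfloor g'(u)\rfloor$ bounds the gap by $V(e^u)-L(e^u)\le 1/g''(\eta_u)$ for a suitable $\eta_u$. A regular-variation computation gives $g''(u)\sim\ro^2V(e^u)\to\oo$, so this discretization gap stays bounded (indeed tends to $0$); as $V(t)\to\oo$ this forces $\log L(t)-\log V(t)\to0$. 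Combined with $M\asymp V$ this gives $\log L(t)-\log M(t)=O(1)$, and transporting a bounded multiplicative gap through the regular variation of $V$ converts it into the horizontal-shift comparison $L(t/b)\le M(t)\le L(t/a)$, i.e. $\L\approx\M$. The crux is exactly this claim that replacing all real slopes of $g$ by integer slopes costs only a bounded amount: it is where the defining condition (D) of a proximate order, $\lim_{r\to\oo}r\ro'(r)\log r=0$, and the smoothness/convexity properties (IV)--(VI) of the Maergoiz function are indispensable, since without such regularity the integer-slope envelope could lag behind $V$ by an unbounded amount and the equivalence would break down.
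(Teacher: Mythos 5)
A preliminary remark: the paper does not actually prove this theorem; it is quoted from \cite{JimenezSanzSchindl} (Theorem 4.14), so your proposal can only be judged against the known line of argument rather than a proof contained in this text. Your architecture is the natural one and matches it in outline: (e)$\Rightarrow$(f) via the observation that $L(t)=t^{d_{\L}(t)}$ is regularly varying of positive index, so that equivalence of (lc) sequences (equivalently, a bounded horizontal shift between the associated functions) yields $\log L(t)-\log M(t)=O(1)$, which is \eqref{eqAdmitsProximateOrder} with $\ro=d_{\L}$; and (f)$\Rightarrow$(e) by fixing $V\in MF(\ga,\ro(r))$ with $c_1V(t)\le M(t)\le c_2V(t)$ (Remark~\ref{rema.M.and.V.admsibility.condition}), defining $\log L_p:=\sup_{t>0}(p\log t-V(t))$, and checking (lc), regular variation of the quotients (so that Theorem~\ref{theorem.charact.prox.order.nonzero} applies), and $\L\approx\M$. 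These reductions, and the Legendre-duality dictionary you set up, are correct.

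The genuine gap sits exactly at the point you yourself call the crux: the boundedness of the discretization gap $V(e^u)-L(e^u)$. You bound it by $1/g''(\eta_u)$ and then claim that ``a regular-variation computation gives $g''(u)\sim\ro^2V(e^u)$''. No such computation exists: asymptotic relations cannot be differentiated, and regular variation of $g=V\circ\exp$ (or even the first-order relation $g'\sim \ro_0\, g$, $\ro_0:=\lim_{t\to\oo}\ro(t)$) carries no pointwise information about $g''$. Nor do Maergoiz's properties supply it: (IV) gives only $g''>0$, and (V) gives an upper, not a lower, bound on the second derivative of $\log g$; nothing in (I)--(VI) prevents $g''$ from being very small on short intervals, which is precisely what your $1/g''$ bound cannot tolerate. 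The claim itself is true, however, and can be salvaged with only first-order data, which is available: by \cite[Prop.~1.2]{Maergoiz} (invoked elsewhere in the paper), $\phi(r):=rV'(r)\sim\ro_0 V(r)$ with $\ro_0>0$, and $\phi$ is increasing by (IV), hence regularly varying of index $\ro_0$. Writing $v_p:=\log \phi^{-1}(p)$ (so $g'(v_p)=p$) and noting that $u\mapsto pu-g^*(p)$ is exactly the tangent line to $g$ at $v_p$, one gets for $u\in[v_p,v_{p+1}]$ that $g(u)-L(e^u)\le\min(u-v_p,\,v_{p+1}-u)\le(v_{p+1}-v_p)/2$, because $p\le g'\le p+1$ there; and $v_{p+1}-v_p=\log\big(\phi^{-1}(p+1)/\phi^{-1}(p)\big)\to0$ since $\phi^{-1}$ is regularly varying of index $1/\ro_0$. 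Note that this uses $\ro_0>0$ essentially, confirming your closing remark that the nonzero index is what keeps the integer-slope envelope from lagging. With this replacement the rest of your argument (transporting the bounded multiplicative gap into a horizontal shift via regular variation, then into $\L\approx\M$ by duality for (lc) sequences, absorbing finitely many indices into the constants) goes through.
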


From this result, we deduce that whenever a class $\tilde{\mathcal{A}}_\M(G)$ (or $\tilde{\mathcal{A}}^u_\M(S)$ or $\mathcal{A}_\M(S)$) is defined in terms of a 
sequence $\M$ admitting a nonzero proximate order, we can exchange $\M$ by another equivalent (lc) sequence $\L$ whose sequence of quotients is regularly varying. Then, we can briefly say that the $\M$-asymptotic expansion of a function
$f\in\tilde{\mathcal{A}}_\M(G)=\tilde{\mathcal{A}}_\L(G)$ has log-convex regularly varying constraints.

\begin{rema}\label{rema.order.ro.1.over.omega}
If $\M$ %is a (lc) sequence with $\lim_{p\ri\oo} m_p=\oo$ that
admits a nonzero proximate order $\ro(t)$, it is clear that
$\lim_{t\to\oo}d_{\M}=\lim_{t\to\oo} \ro(t)$ (see~\eqref{eqAdmitsProximateOrder}), and from~\cite[Remark\ 4.15]{JimenezSanzSchindl} we deduce that this common value is $1/\o(\M)$.
\end{rema}

\begin{rema}\label{rema.M.and.V.admsibility.condition}
If $\M$ admits a nonzero proximate order $\ro(r)$, for every $\ga>0$, thanks to (VI), we know that there exist $V\in MF(\ga,\ro(r))$
 and positive constants $A,B,t_0$ such that
\begin{equation}\label{ineq.function.M.and.V}
AV(t)\leq M(t)\leq BV(t), \qquad  t>t_0.
\end{equation}
\end{rema}

In \cite[Remark~4.15]{JimenezSanzSchindl} it has been shown that sequences admitting a proximate order are indeed strongly regular. So, as indicated in~\cite[Remark 4.11.(iii)]{SanzFlat}, for such sequences $\M$ one may construct nontrivial flat functions in $\tilde{\mathcal{A}}_{\M}(G_{\o(\M)})$, what allows us to state the following version of Watson's Lemma for non-uniform asymptotics.

\begin{theo}[\cite{SanzFlat}, Corollary~4.12]\label{TheoWatsonlemmaProxOrder}
Suppose $\M$ %is a (lc) sequence with quotients tending to infinity
admits a nonzero proximate order, and let $\ga>0$ be given. The following statements are equivalent:
\begin{itemize}
\item[(i)] $\tilde{\mathcal{A}}_{\M}(G_{\gamma})$ is quasianalytic.
\item[(ii)] $\gamma>\omega(\M)$.
\end{itemize}
\end{theo}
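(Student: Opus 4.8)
The plan is to prove the equivalence of quasianalyticity of $\tilde{\mathcal{A}}_{\M}(G_\ga)$ with the condition $\ga>\o(\M)$ by establishing both implications, leaning on the classical quasianalyticity machinery already assembled (Theorem~\ref{theo.thillez.flat.function.M}, Remark~\ref{rema.M.and.V.admsibility.condition}, Corollary~\ref{corollary.Watson.Lemma.Uniform.Bounds}) together with the fine structure of Maergoiz functions from Theorem~\ref{propanalproxorde}. The nontrivial direction is (ii)$\Rightarrow$(i): assuming $\ga>\o(\M)$, I must show that any flat $f\in\tilde{\mathcal{A}}_{\M}(G_\ga)$ vanishes identically. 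The reverse direction (i)$\Rightarrow$(ii), equivalently $\neg$(ii)$\Rightarrow\neg$(i), requires exhibiting a nontrivial flat function when $\ga\le\o(\M)$, and this is exactly the construction referenced in the text (from~\cite[Remark 4.11.(iii)]{SanzFlat}): for a strongly regular sequence admitting a nonzero proximate order one builds a nontrivial flat function in $\tilde{\mathcal{A}}_{\M}(G_{\o(\M)})$, which restricts to a nontrivial flat function in any $G_\ga$ with $\ga\le\o(\M)$.

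For (ii)$\Rightarrow$(i), first I would use the admissibility hypothesis to replace $\M$ by an equivalent (lc) sequence whose quotients are regularly varying, so that by Remark~\ref{rema.M.and.V.admsibility.condition} I may fix, for the given opening $\ga$, a Maergoiz function $V\in MF(\ga,\ro(r))$ with $AV(t)\le M(t)\le BV(t)$ for large $t$, where $\ro(r)$ is a nonzero proximate order tending to $1/\o(\M)$. Let $f$ be flat in $\tilde{\mathcal{A}}_{\M}(G_\ga)$. By Theorem~\ref{theo.thillez.flat.function.M}, on every proper subsector $T\ll G_\ga$ there are $c_1,c_2>0$ with $|f(z)|\le c_1 e^{-M(1/(c_2|z|))}$, and by the comparison \eqref{ineq.function.M.and.V} this gives an estimate of the form $|f(z)|\le c_1 e^{-A\,V(1/(c_2|z|))}$ on $T$. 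The growth exponent controlling $V(1/|z|)$ is $\ro=1/\o(\M)$, so since $\ga>\o(\M)$ the opening exceeds the critical value $\pi/\ro=\pi\o(\M)$; intuitively, the sector is wide enough that a function decaying at this rate in every interior direction cannot avoid being zero.

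To convert this intuition into a proof I would pass to the half-plane picture used for Corollary~\ref{corollary.Watson.Lemma.Uniform.Bounds}, or more directly reduce to Mandelbrojt's theorem (Theorem~\ref{theo.Mandelbrojt}). Concretely, I would apply a conformal change of variable $w=z^{1/\ga}$ (or the analogous map used in Remark~\ref{remaQuasianalUniformAsymptBoundedSectors}) to transport $f$ to a holomorphic function on a sector of opening wider than $\pi$, hence containing a half-plane after rotation, and then translate the flatness bound into the decay hypothesis of Mandelbrojt's statement. The regular variation of the quotients, equivalently that $d_{\M}(t)=\log(M(t))/\log(t)$ is a genuine proximate order with limit $1/\o(\M)$, is what makes $V(1/|z|)$ behave like $|z|^{-\ro(1/|z|)}$ with $\ro(1/|z|)\to 1/\o(\M)$, and this asymptotic regularity is precisely what is needed to line up the decay estimate with the series of bounds \eqref{eqBoundsTheorMandel} indexed by $p$. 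Once $f$ is shown to satisfy those bounds on a half-plane, Mandelbrojt (via the divergence condition, which holds here because $\ga>\o(\M)$ forces the relevant exponent to fall in the convergent regime for the competitor and hence the flat function to be killed) yields $f\equiv 0$.

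The main obstacle I anticipate is the careful handling of the critical exponent and the uniform control of the decay rate across directions. The estimate from Theorem~\ref{theo.thillez.flat.function.M} is only subsector-wise with constants depending on $T$, whereas Mandelbrojt's theorem wants a single global bound on a half-plane; bridging this gap demands that the loss of uniformity as $T$ fills out $G_\ga$ be absorbed, which is feasible precisely because the strict inequality $\ga>\o(\M)$ leaves room to spare in the opening. A second delicate point is that $V(r)$ grows like $r^{\ro}$ only asymptotically and up to the slowly varying correction encoded by the proximate order, so the reduction to the clean power-law hypothesis \eqref{eqBoundsTheorMandel} of Mandelbrojt's theorem must exploit Remark~\ref{rema.V.order.less.ro} (that $r^{\ro(r)}<r^{\tau}$ for any $\tau>\ro$ and large $r$) to dominate $V$ by a pure power with exponent strictly between $\o(\M)$ and $\ga$, thereby converting the Maergoiz decay into Gevrey-type decay of an order that the wide sector cannot tolerate.
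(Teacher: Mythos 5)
The first thing to note is that the paper does not prove Theorem~\ref{TheoWatsonlemmaProxOrder} at all: it is imported verbatim from \cite{SanzFlat} (Corollary 4.12), the surrounding text only explaining why the hypotheses used there reduce to the present ones (sequences admitting a nonzero proximate order are strongly regular, and nontrivial flat functions exist in $\tilde{\mathcal{A}}_{\M}(G_{\o(\M)})$ by \cite[Remark 4.11.(iii)]{SanzFlat}). So your reconstruction can only be judged on its own merits. Its skeleton is the natural one and is essentially sound: for $\ga\le\o(\M)$ you quote the flat-function construction, exactly as the paper's discussion suggests; for $\ga>\o(\M)$ you fix a bounded proper subsector $T\ll G_\ga$ of opening $\pi\mu$ with $\o(\M)<\mu<\ga$, use Theorem~\ref{theo.thillez.flat.function.M} on $T$, and reduce to a uniform-asymptotics quasianalyticity statement, finishing with the identity theorem. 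You are also right that no ``absorption of non-uniformity'' as $T$ fills out $G_\ga$ is needed; one fixed subsector of supercritical opening suffices.

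There are, however, two problems with the execution of (ii)$\Rightarrow$(i). The genuine one is that your key estimate points the wrong way: Remark~\ref{rema.V.order.less.ro} provides the \emph{upper} bound $r^{\ro(r)}<r^{\tau}$ for $\tau>\ro$, but to convert $|f(z)|\le c_1e^{-M(1/(c_2|z|))}$, i.e. $|f(z)|\le c_1e^{-A\,V(1/(c_2|z|))}$, into Gevrey-type decay $e^{-c|z|^{-1/\mu}}$ you need a \emph{lower} bound $V(r)\ge r^{1/\mu}$ for large $r$ (which does hold, since $\ro_V(r)\to 1/\o(\M)>1/\mu$, but is not what you cite). Upper-bounding $V$ by a pure power only weakens the decay hypothesis and yields nothing; as written, this step fails, although the fix is one line. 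The second problem is that the whole detour through $V$ is unnecessary for this implication: by the very definition $M(t)=\sup_{p\in\N_0}\log(t^p/M_p)$ one has $e^{-M(1/(c_2|z|))}=\inf_{p\in\N_0}M_p(c_2|z|)^p$, so the conclusion of Theorem~\ref{theo.thillez.flat.function.M} on $T$ says \emph{literally} that $|f(z)|\le c_1c_2^pM_p|z|^p$ for all $p$ and $z\in T$, i.e. that $f$ is uniformly $\M$-flat on $T$. Then Corollary~\ref{corollary.Watson.Lemma.Uniform.Bounds}, in its bounded-sector version (Remark~\ref{remaQuasianalUniformAsymptBoundedSectors}, together with Remark~\ref{rema.Watson.Lemma.bisect.direct} for the bisecting direction), applied with opening parameter $\mu>\o(\M)$, gives $f\equiv0$ on $T$ directly. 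Besides being shorter, this route makes transparent that the proximate-order hypothesis is only needed for the construction direction (i)$\Rightarrow$(ii), not for (ii)$\Rightarrow$(i).
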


Moreover, for such sequences we can generalize Borel-Ritt-Gevrey theorem \cite{SanzFlat} and the Gevrey summability theory following Balser's moment summability methods, see~\cite{lastramaleksanz3}.

\begin{rema}\label{rema.Watson.Lemma.bisect.direct}
Corollary~\ref{Coro.Salinas} , Corollary~\ref{corollary.Watson.Lemma.Uniform.Bounds} and Theorem~\ref{TheoWatsonlemmaProxOrder} are also valid if we change the bisecting direction of the considered sectorial region.
\end{rema}

\section{$\M$-flatness extension}\label{sectionFlatnessExtension}

From this point on we will assume not only that the sequence $\M$ is logarithmically convex with $\lim_{p\ri\oo} m_p=\oo$, but also that
$$
\textbf{the sequence $\M$ admits a nonzero proximate order}.
$$
This is not a strong assumption for strongly regular sequences, since it is satisfied by every such sequence appearing in applications (the Gevrey ones, or the one associated to the $1^+$-level asymptotics). However, note that there are strongly regular sequences which do not satisfy it, see~\cite{JimenezSanzSchindl}.

We are ready for proving an important lemma about the extension of $\M$-flatness from a boundary direction into a whole small sector for functions bounded there and admitting a continuous extension to the boundary (considered in $\mathcal{R}$, i.e., disregarding the origin). First, we recall a classical version of \PL\ theorem needed in the proof.

\begin{theo}[\PL\ theorem, \cite{Titchmarsh}, p.\ 177]\label{Theo.Prag.Lind}
 Let $f$ be a function holomorphic in a sector $S=S(d,\ga,\ro)$, continuous and bounded by $C$ in the boundary $\partial S$.
  Suppose there exist $K,L>0$ and $\o>\ga$ such that
 $$|f(z)|<Ke^{L |z|^{-1/\o}}$$
 for every $z\in S$. Then $f$ is bounded by $C$ in the sector $S$.
\end{theo}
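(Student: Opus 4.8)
The plan is to run a standard Phragmén–Lindel\"of comparison argument, in which the only singular boundary point is the vertex $z=0$, where $f$ is allowed to grow. After a rotation we may assume $d=0$, so that $S=\{z\in\mathcal{R}:|\arg(z)|<\ga\pi/2,\ |z|<\ro\}$ and the growth hypothesis reads $|f(z)|<Ke^{L|z|^{-1/\o}}$.

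First I would exploit the strict inequality $\o>\ga$ to fix an intermediate exponent $\b$ with $\ga<\b<\o$, and introduce the auxiliary function $g_\ep(z)=\exp(-\ep\,z^{-1/\b})$ for $\ep>0$, using the branch of $z^{-1/\b}$ that is real and positive on the positive real axis. Writing $z=re^{i\th}$ with $|\th|\le\ga\pi/2$ on $\overline{S}$, one has $\mathrm{Re}(z^{-1/\b})=r^{-1/\b}\cos(\th/\b)$, and since $|\th/\b|\le(\ga/\b)\pi/2<\pi/2$ the cosine stays bounded below by $c_0:=\cos(\ga\pi/(2\b))>0$. Hence $|g_\ep(z)|=\exp(-\ep\,r^{-1/\b}\cos(\th/\b))\le1$ throughout $\overline{S}$, while near the vertex the decay is genuine and, crucially, of order $r^{-1/\b}$.

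The heart of the matter is the product $F_\ep:=f\cdot g_\ep$. On the two radial edges and on the outer arc $|z|=\ro$ the function $f$ is continuous and bounded by $C$, and $|g_\ep|\le1$, so $|F_\ep|\le C$ there. Near the vertex I would combine the growth and decay estimates into
\[
|F_\ep(z)|\le K\exp\!\big(L\,r^{-1/\o}-\ep\,c_0\,r^{-1/\b}\big),
\]
and observe that, because $1/\b>1/\o$, the negative term dominates as $r\to0$, so $|F_\ep(z)|\to0$ as $z\to0$ in $S$. Applying the maximum modulus principle on the truncated region $S\cap\{|z|>\de\}$ — whose boundary adds only the inner arc $|z|=\de$, on which $|F_\ep|$ is at most $C$ once $\de$ is small enough, by the estimate above — yields $|F_\ep|\le C$ on each such region, and hence on all of $S$ after letting $\de\ri0$.

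Finally, unwinding the definition gives $|f(z)|\le C\exp(\ep\,\mathrm{Re}(z^{-1/\b}))$ for every $z\in S$ and every $\ep>0$; letting $\ep\ri0^+$ with $z$ fixed produces the desired bound $|f(z)|\le C$. The step I expect to require the most care is the choice of the auxiliary exponent $\b$: the two demands on $g_\ep$ — that it be bounded by $1$ on the closed sector (forcing $\b>\ga$, through control of the argument) and that its decay at the vertex beat the prescribed growth (forcing $\b<\o$) — are simultaneously satisfiable precisely because the hypothesis grants $\o>\ga$, and making both inequalities cooperate is the crux of the argument.
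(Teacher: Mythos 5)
Your proof is correct and follows essentially the canonical argument: the paper offers no proof of this statement (it is quoted as a classical result from Titchmarsh), and the classical proof is exactly your comparison scheme, transported from a neighbourhood of infinity to the vertex by the inversion $z\mapsto 1/z$. The key points are all in place: the auxiliary factor $e^{-\ep z^{-1/\b}}$ with $\ga<\b<\o$ is bounded by $1$ on the closed sector (since $|\arg(z)/\b|<\pi/2$ there) while its decay $e^{-\ep c_0 |z|^{-1/\b}}$ dominates the permitted growth $e^{L|z|^{-1/\o}}$ at the vertex, the maximum modulus principle on the truncated regions $S\cap\{|z|>\de\}$ yields $|f\,g_\ep|\le C$ on all of $S$, and letting $\ep\to 0^{+}$ at each fixed $z$ removes the auxiliary factor.
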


Now we obtain an analogue of \PL\ theorem for $\M$-flat functions in a sector.

%{\color{blue} Lemma 4  in FZ }

\begin{lemma}%[$\M$-flat in one direction small opening]
\label{lemma.Mflat.one.direction.small.opening}
Let $\M$ and $0<\ga<\o(\M)$ be given. Suppose $f$ is a bounded holomorphic function in $S_{\ga}$ that admits a continuous extension to the boundary $\partial S_\ga$, and that is $\M$-flat in direction $d=\pi\ga/2$.
Then for every $0<\de<\pi\ga$, there exist constants $k_1(\de),k_2(\de)>0$ with
 $$|f(z)|\leq k_1e^{-M(1/(k_2|z|))},\qquad \arg(z)\in[-\pi\ga/2+\de,\pi\ga/2].  $$
\end{lemma}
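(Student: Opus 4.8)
The plan is to reduce the statement to the classical \PL\ theorem (Theorem~\ref{Theo.Prag.Lind}) applied to $f$ multiplied by a suitably \emph{rotated} comparison function built from a Maergoiz function. Write $\o:=\o(\M)$ and recall that the admitted proximate order satisfies $\lim_{r\to\oo}\ro(r)=1/\o$ (Remark~\ref{rema.order.ro.1.over.omega}). First I would put the two hypotheses in exponential form. Since $f$ is bounded, say $|f(z)|\le C_0$ on $S_\ga$, and $\M$-flat in the direction $\pi\ga/2$, Remark~\ref{rema.bounded.function.null.asymp.expan.dir} (used on the boundary ray via the continuous extension) yields $\tilde C,A_0>0$ with $|f(z)|\le \tilde C\,e^{-M(1/(A_0|z|))}$ whenever $\arg(z)=\pi\ga/2$. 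Next, fixing any $\ga''>\o$, Remark~\ref{rema.M.and.V.admsibility.condition} furnishes $V\in MF(\ga'',\ro(r))$ and $A,B,t_0>0$ with $AV(t)\le M(t)\le BV(t)$ for $t>t_0$; the choice $\ga''>\o$ guarantees that $V$ is analytic on a sector wide enough to contain all the points $e^{i\sg}/z$ that occur below.

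The heart of the argument is the comparison function $\Phi(z):=\exp\big(\tau\,V(e^{i\sg}/z)\big)$, with a rotation $\sg$ and a weight $\tau>0$ to be fixed. By property (I) of Theorem~\ref{propanalproxorde}, with $r=1/|z|\to\oo$,
\[
\log|\Phi(z)|=\tau\,\Re V\big(e^{i\sg}/z\big)=\tau\,V(1/|z|)\Big(\cos\big(\tfrac{\sg-\arg z}{\o}\big)+o(1)\Big),
\]
uniformly in $\arg z$ over the closed sector. I would take $\sg=\tfrac{\pi}{2}(\o-\ga)+\ep$ with $0<\ep<\de$: this is exactly the width-$\de$ window in which three facts hold at once. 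On the lower edge $\arg z=-\pi\ga/2$ one has $\tfrac1\o(\sg+\pi\ga/2)=\tfrac{\pi}{2}+\ep/\o>\pi/2$, so $\cos\le 0$ and $\Phi$ is bounded there. On the target subsector $\arg z\in[-\pi\ga/2+\de,\pi\ga/2]$ the angle $\tfrac1\o(\sg-\arg z)$ stays in $(-\pi/2,\pi/2)$ with cosine bounded below by $c_0=\sin((\de-\ep)/\o)>0$, so $\Phi$ grows like $e^{c_0\tau V(1/|z|)}$. On the flat edge $\arg z=\pi\ga/2$ the growth exponent of $\Phi$ is a fixed positive multiple of $V(1/|z|)$ which, after taking $\tau$ small, is strictly dominated by the flatness decay $\tilde C e^{-M}\le\tilde C e^{-AV(1/(A_0|z|))}$ (here I would invoke the regular variation of $V$ to compare $V(1/(A_0|z|))$ with $V(1/|z|)$). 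The fact that the admissible window for $\sg$ has width exactly $\de$ is precisely what forces the loss of the wedge of opening $\de$, and it rests on $\ga<\o=\o(\M)$ and on the value $1/\o$ of the proximate order.

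With these choices set $g:=f\,\Phi$ and apply Theorem~\ref{Theo.Prag.Lind} on the bounded sector $S(0,\ga,r_0)$ for a suitable radius $r_0$. On $\partial S(0,\ga,r_0)$ the function $g$ is bounded: on the flat edge because the flatness beats the growth of $\Phi$, on the lower edge because $\Phi$ is bounded and $|f|\le C_0$, and on the bounding arc by continuity. The \PL\ growth hypothesis is met because $|g|\le C_0\,e^{\tau|V(e^{i\sg}/z)|}$ and $V(t)=t^{\ro_V(t)}$ with $\ro_V(t)\to 1/\o$, so $V(1/|z|)\le|z|^{-1/\o'}$ eventually for some $\o'$ with $\ga<\o'<\o$; this gives $|g(z)|<Ke^{L|z|^{-1/\o'}}$ with $\o'>\ga$. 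Hence $g$ is bounded on $S(0,\ga,r_0)$, say $|g|\le C'$.

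Finally, on the target subsector and for small $|z|$,
\[
|f(z)|\le \frac{C'}{|\Phi(z)|}=C'e^{-\tau\,\Re V(e^{i\sg}/z)}\le C'e^{-c_1 M(1/|z|)},
\]
with $c_1=\tau c_0/(2B)>0$, using $V\ge M/B$; the estimate extends to all $|z|$ since $f$ is bounded and $M(1/(k_2|z|))$ vanishes for $|z|$ large. It then remains to absorb the constant $c_1$ in front of $M$ into a rescaling of its argument: as $\M$ admits a nonzero proximate order it is strongly regular, in particular of moderate growth, so there is $k_2>0$ with $M(t/k_2)\le c_1 M(t)+\log k_1$ for large $t$ (which also follows from the regular variation of $V\approx M$ by choosing $k_2^{-1/\o}<c_1$), giving $|f(z)|\le k_1 e^{-M(1/(k_2|z|))}$ as claimed. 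The main obstacle is the calibration in the second paragraph: one must make $\Phi$ stay bounded on the lower edge of the \emph{whole} sector $S_\ga$, so that \PL\ applies there and not merely on the target subsector where $f$ is not yet controlled, while simultaneously being beaten by the flatness on the flat edge and still growing like $e^{cV}$ throughout the target subsector; that a window of rotations realizes all three is exactly where $\ga<\o(\M)$ enters and where the sacrificed opening $\de$ is pinned down.
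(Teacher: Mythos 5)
Your proof is essentially the paper's own argument: the paper likewise multiplies $f$ by the exponential of a rotated, scaled Maergoiz function, namely $F(z)=f(z)e^{V(a/z)}$ with $V\in MF(2\o,\ro(r))$, $\arg(a)=\frac{\pi}{2}(\o-\ga)+\frac{\de}{2}$ and $|a|$ small (your $\sg$ and $\tau$ packaged into the single point $a$), bounds $F$ on the two radial edges exactly as you do (boundedness of $f$ plus negative cosine on the lower edge, flatness beating the growth on the upper edge), verifies the \PL\ growth hypothesis via $\ro_V(t)\to 1/\o$ (Remark~\ref{rema.V.order.less.ro}), applies Theorem~\ref{Theo.Prag.Lind} on a bounded sector, and finally converts $V$ back into $M$ by choosing $k_2$ with $k_2^{-1/\o}$ below the coefficient in the exponent, enlarging $k_1$ to cover large $|z|$. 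The one inaccuracy is your constant $c_0=\sin((\de-\ep)/\o)$: the angle $\frac{1}{\o}(\sg-\arg z)$ sweeps the interval $\left[\frac{\pi}{2}-\frac{\pi\ga-\ep}{\o},\,\frac{\pi}{2}-\frac{\de-\ep}{\o}\right]$, and when $\pi\ga+\de-2\ep>\pi\o$ the cosine's minimum occurs at the left endpoint, so the correct lower bound is $\min\{\sin(\frac{\de-\ep}{\o}),\sin(\frac{\pi\ga-\ep}{\o})\}$ --- still strictly positive, which is all the qualitative statement requires, and a subtlety the paper sidesteps by the symmetric choice $\ep=\de/2$ (yielding its bound $\sin(\frac{\de}{2\o})$).
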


\begin{proof}
 For simplicity, we denote $\o:=\o(\M)$.  We fix $0<\de<\pi\ga$. Since $\ga<\o$, we have that
 $$\frac{\pi}{2}<\b=\b(\de):=\frac{1}{\o} (\frac{\pi}{2}\o+\frac{\de}{2}) <  \pi, \quad
  -\frac{\pi}{2}+\frac{\de}{2\o}<\a=\a(\de):=\frac{1}{\o} (\frac{\pi}{2}\o-\pi\ga+\frac{\de}{2}) < \frac{\pi}{2}.$$
 Then we take $\ep,\eta>0$ (depending on $\de$) such that
 $$\cos(\b)+\ep\leq -\eta<0.$$

Since $\M$ admits a nonzero proximate order $\ro(r)$, by Remark~\ref{rema.M.and.V.admsibility.condition} there exist $V\in MF(2\o,\ro(r))$
 and positive constants $A,B,t_0$ such that~\eqref{ineq.function.M.and.V} holds.
According to Remark~\ref{rema.bounded.function.null.asymp.expan.dir}, and specifically to~\eqref{eqMFlatDirecBoundedFunction}, there exist $c_1,c_2>0$ with
\begin{equation}\label{ineq.Mflat.one.direction}
|f(z)|\le c_1e^{-M(1/(c_2|z|))},\qquad \arg(z)=\pi\ga/2.
\end{equation}
Choose $d_2>0$ such that $c_2^{-1/\o}>d_2$, and take $a\in\Ri$ with
$$
\arg(a)=\frac{\o \pi}{2}-\frac{\pi\ga}{2}+\frac{\de}{2}, \qquad 0<|a|<\left( \frac{Ad_2}{2}\right)^{\o}.
$$
It is clear that $\ep<1$, so we have that
\begin{equation}
\cos\left(\frac{\arg(a)-\arg(z)}{\o} \right)+\ep\leq 2\label{ineq.cos.every.z}
\end{equation}
for every $z\in\overline{S_\ga}$.

We observe that $\arg(a/z)\in [\o \a, \o\b]\en (-\pi\o/2,\pi\o)$ for every $z\in \overline{S_\ga}$. Taking into account Remark~\ref{rema.order.ro.1.over.omega} and using property (I) of the functions in $MF(2\o,\ro(r))$ we see that
 $$\lim_{|z|\to 0} \frac{V(a/z)}{|a|^{1/\o} V(1/|z|)} = e^{i(\arg(a)-\arg(z))/\o}$$
 uniformly for $\arg(z)\in [- \pi\ga/2, \pi\ga/2 ]$. Consequently,
  $$\lim_{|z|\to 0} \Re\left( \frac{V(a/z)}{|a|^{1/\o} V(1/|z|)} \right)=  \cos((\arg(a)-\arg(z))/\o)$$
uniformly for $\arg(z)\in [- \pi\ga/2, \pi\ga/2 ]$, and we deduce that
\begin{align}
 |a|^{1/\o} V\left(\frac{1}{|z|}\right) \left(  \cos\left( (\arg(a)-\arg(z))/\o\right)  -\ep\right))&\leq
 \Re \left(V\left(\frac{a}{z}\right)\right), \label{inequality.realpart.V.below}\\
 |a|^{1/\o} V\left(\frac{1}{|z|}\right) \left(  \cos\left( (\arg(a)-\arg(z))/\o\right)  +\ep\right)&\geq
 \Re \left(V\left(\frac{a}{z}\right)\right), \label{inequality.realpart.V.above}
\end{align}
for $|z|<s_1$ small enough and  $\arg(z)\in [- \pi\ga/2, \pi\ga/2 ]$.  For convenience, we choose $s_1<1/(t_0c_2)$. Consider the function
$$F(z):=f(z)e^{V(a/z)}.$$
The function $V(a/z)$ is holomorphic in $S(\arg(a),{2\o})\supset\overline{S_{\ga}}$, so $F(z)$
is holomorphic in $S_\ga$ and continuous up to $\partial S_\ga$. Our aim is to apply the \PL\ theorem~\ref{Theo.Prag.Lind} to this function in a suitable bounded sector $S(0,\ga,s_3)$.

If $\arg(z)=-\pi\ga/2$,
we have that $\arg(a)-\arg(z)=\b\o$. Then, since $f$ is bounded in $\overline{S_\ga}$ by a constant $K>0$, by  using~\eqref{inequality.realpart.V.above} we see that for $|z|<s_1$,
$$|F(z)|\leq K e^{\Re(V(a/z))} \leq K e^{(\cos(\b)+\ep)|a|^{1/\o} V(1/|z|)} \leq   K e^{-\eta|a|^{1/\o} V(1/|z|)}.$$
Now, observe that $V(1/|z|)>0$ (property (III)), so we deduce that $|F(z)|\leq K$ for every $z$ with $|z|<s_1$ and $\arg(z)=-\pi\ga/2$.

If $\arg(z)=\pi\ga/2$, we have that $\arg(a)-\arg(z)=\a\o$. Then, from~\eqref{ineq.Mflat.one.direction}, \eqref{ineq.function.M.and.V}, \eqref{ineq.cos.every.z} and~\eqref{inequality.realpart.V.above} we see that, if $|z|<s_1$,
$$|F(z)| \leq c_1 e^{-M(1/(c_2|z|))} e^{(\cos(\a)+\ep)|a|^{1/\o} V(1/|z|)} \leq  c_1 e^{-AV(1/(c_2|z|))+ 2|a|^{1/\o} V(1/|z|)}.$$
Using property (I) of the functions in $MF(2\o,\ro(r))$ we have that
 $$\lim_{|z|\to 0} \frac{V(1/(c_2|z|))}{V(1/|z|)} =c_2^{-1/\o}.$$
Then, for $|z|<s_2\leq s_1$ small enough we have that $V(1/(c_2|z|))\geq d_2 V(1/|z|)$, and we conclude that
$$|F(z)| \leq  c_1 e^{ (-A d_2 + 2 |a|^{1/\o}) V(1/|z|)}, \qquad |z|<s_2,\quad \arg(z)=\pi\ga/2.$$
Since $|a|$ has been chosen small enough in order that $-A d_2 + 2 |a|^{1/\o}<0 $, we deduce that $|F(z)|\leq c_1$ for every $|z|<s_2$ and $\arg(z)=\pi\ga/2$.

%%Revisado

For $z\in S_\gamma$ with $|z|<s_1$, by using~\eqref{ineq.cos.every.z} and~\eqref{inequality.realpart.V.above}  we have that
$$ \Re \left(V\left(\frac{a}{z}\right)\right)\leq 2 |a|^{1/\o} V\left(\frac{1}{|z|}\right).$$
As $\ga<\o$, there exists $\mu>0$ such that $\ga<\mu<\o$. By property (VI), we know that $\log(V(t))/\log(t)$ is a proximate order equivalent to $\ro(r)$, hence tending to $1/\o$ at infinity. Then, we can apply Remark~\ref{rema.V.order.less.ro}: there exists $0<s_3\leq s_2$ small enough such that for every $z\in S_\ga $, $|z|\leq s_3$,
$$ \Re \left(V\left(\frac{a}{z}\right)\right)\leq 2 |a|^{1/\o} \left(\frac{1}{|z|}\right)^{1/\mu}.$$
Since $f(z)$ is bounded in $S_\ga$, we have that
$$|F(z)|\leq K \exp ( 2 |a|^{1/\o} |z|^{-1/\mu} ) , \qquad z\in S_\ga,\quad |z|\leq s_3,$$
and, in particular,
$$|F(z)|\leq K \exp ( 2 |a|^{1/\o} s_3^{-1/\mu} ) , \qquad z\in S_\ga,\quad |z|= s_3.$$
By applying  \PL\ theorem~\ref{Theo.Prag.Lind} to the function $F(z)$ in $S(0,\ga,s_3)$, we obtain that
$$
|F(z)|\leq K_0:=\max(K,c_1, K \exp ( 2 |a|^{1/\o} s_3^{-1/\mu} ))
$$
for $|z|\leq s_3 $ and $\arg(z)\in[-\pi\ga/2,\pi\ga/2]$.

Consequently, using~\eqref{inequality.realpart.V.below}, if $|z|\leq s_3$ and $\arg(z)\in[-\pi\ga/2,\pi\ga/2]$ we have that
$$|f(z)|\leq K_0 e^{\Re(-V(a/z))} \leq K_0 e^{-(\cos((\arg(a)-\arg(z))/\o)-\ep)|a|^{1/\o} V(1/|z|)}. $$
Assuming that $\arg(z)\in[-\pi\ga/2+\de,\pi\ga/2]$, we deduce that
$$\cos((\arg(a)-\arg(z))/\o)\geq \cos\left(\frac{\pi}{2}-\frac{\de}{2\o} \right)=-\cos(\b)\geq \eta+\ep >0.$$
Then, for $r_2:=\eta|a|^{1/\o} >0$ we find that for every $z$ with $\arg(z)\in[-\pi\ga/2+\de,\pi\ga/2]$ and $|z|<s_3$ we have that
$$|f(z)|\leq K_0 e^{-r_2 V(1/|z|)}. $$
Choose $k_2>0$ such that $(1/k_2)^{1/\o}<r_2/B$. Property (I) of the functions in $MF(2\o,\ro(r))$ implies that,
for $z$ with $|z|<s_4 <\min(s_3,1/(t_0k_2))$, small enough, and $\arg(z)\in[-\pi\ga/2+\de,\pi\ga/2]$,
we have
$$|f(z)|\leq K_0 e^{-B V(1/(k_2|z|))}\leq K_0 e^{-M(1/(k_2|z|))}.$$
We take $k_1:=K_0 e^{M(1/(k_2 s_4))}\geq K_0$. Then, since $M(t)$ is nondecreasing, if $|z|\geq s_4$ and $\arg(z)\in[-\pi\ga/2+\de,\pi\ga/2]$ we have
$$
|f(z)|\leq K \leq K_0 = k_1 e^{-M(1/(k_2 s_4))}\leq k_1 e^{-M(1/(k_2 |z|))},
$$
which concludes the proof.
\end{proof}

\begin{rema}\label{rema.bisect.direct.Mflat} Some comments are in order concerning the statement or proof of the previous result.

By a simple rotation, one may easily check that the validity of
Lemma~\ref{lemma.Mflat.one.direction.small.opening} does not depend on the bisecting direction of the sector where the function $f$ is defined.
Moreover, one could slightly weaken the hypotheses by considering a function $f$ holomorphic in $S_{\ga}$ that admits a continuous extension to the direction $d=\pi\ga/2$, in which it is $\M$-flat, and that is bounded in every (half-open) sector
$$
\{z\in\mathcal{R}:\arg(z)\in(-\frac{\pi\ga}{2}+\de,\frac{\pi\ga}{2}]\},\quad \de>0.
$$

Indeed, we may give a more precise information about the type. Following the previous proof, one notes that
$$
k_2=k_2(\de)>\Big(\frac{B}{r_2}\Big)^{\o}=
\Big(\frac{B}{\eta |a|^{1/\o}}\Big)^{\o}\ge
\Big(\frac{2B}{Ad_2\cos(\frac{\pi}{2}-\frac{\delta}{2\o})}\Big)^{\o}
\ge \Big(\frac{2B}{A}\Big)^{\o}
\Big(\frac{1}{\sin(\frac{\delta}{2\o})}\Big)^{\o} c_2,$$
and $k_2$ may be made arbitrarily close to the last expression at the price of enlarging the constant $k_1=k_1(\de)$.
So, the original type $c_2$ is basically affected by a precise factor when moving to a direction $\theta=-\pi\ga/2+\de$ with  $0<\de<\pi\ga$. It is obvious that $k_2(\de)$ explodes at least like $1/\sin^\o(\de)$ as $\de\to 0$. This means that the type of the null asymptotic expansion tends to 0 as the direction in the sector approaches the boundary $d=-\pi\ga/2$, in the same way as in the Gevrey case (see Theorem~\ref{theoFruchardZhang}).

Moreover, the constant 2 in $\de/(2\o)$ could be any number greater than 1 and, by suitably choosing the value $\varepsilon$ in the proof, the constant $2B/A$ appearing before can be made as close to $B/A$ as desired, so that the only indeterminacy in the previous factor is caused by the values $A,B$ involved in~\eqref{ineq.function.M.and.V}. In the common situation that the function $d_{\M}(t)$ is indeed a proximate order, the constants $A$ and $B$ can also be taken as near to $1$ as wanted, what makes the expression even more explicit.

Finally, note that, by using Theorem~\ref{theo.charact.admit.p.o} one may change $\M$ by an equivalent sequence $\L$ such that $d_\L$ is a proximate order. However, this fact does not improve the proof,
since again Theorem~\ref{propanalproxorde} will be applied to obtain a function $V\in MF(2\o,d_{\L}(t))$ and we will work with the same type of estimate that we have in~\eqref{ineq.function.M.and.V}.
\end{rema}

The following lemma shows that imposing $\ga<\o(\M)$ is only a technical condition in order to apply \PL\
theorem~\ref{Theo.Prag.Lind}.

\begin{lemma}%[M-flat in one direction large opening]
\label{lemma.Mflat.one.direction.large.opening}
Let $\M$ and $\ga>0$ be given. Suppose $f$ is a bounded holomorphic function in $S_{\ga}$ that admits a continuous extension to the boundary $\partial S_\ga$, and that is $\M$-flat in direction $d=\pi\ga/2$.
Then for every $0<\de<\pi\ga$, there exist constants $k_1(\de),k_2(\de)>0$ with
 $$|f(z)|\leq k_1e^{-M(1/(k_2|z|))},\qquad \arg(z)\in[-\pi\ga/2+\de,\pi\ga/2].  $$
\end{lemma}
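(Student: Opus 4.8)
The plan is to bootstrap from the already-proven Lemma~\ref{lemma.Mflat.one.direction.small.opening} by an elementary iteration that slices $S_\ga$ into subsectors thin enough for that lemma to apply. If $\ga<\omega(\M)$ the statement is \emph{verbatim} Lemma~\ref{lemma.Mflat.one.direction.small.opening}, so there is nothing to prove. Assume then $\ga\ge\omega:=\omega(\M)$, fix $0<\de<\pi\ga$, and choose once and for all an auxiliary opening $\ga_0$ with $0<\ga_0<\omega$, say $\ga_0=\omega/2$. Recall from Remark~\ref{rema.bounded.function.null.asymp.expan.dir} that, since $f$ is bounded in $S_\ga$, being $\M$-flat in a direction $\theta$ is the same as satisfying a bound \eqref{eqMFlatDirecBoundedFunction} along the whole ray $\arg(z)=\theta$.

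The idea is to transport such a flatness bound from the boundary direction $\pi\ga/2$ downwards across $S_\ga$ in finitely many steps of fixed angular width $\pi\ga_0/2$. Set $\theta_0:=\pi\ga/2$, where $f$ is $\M$-flat by hypothesis. Suppose $f$ is $\M$-flat in the direction $\theta_k$ for some $\theta_k>-\pi\ga/2+\de$, and consider the subsector $\Sigma_{k+1}\ll S_\ga$ formed by the $z$ with $\arg(z)\in[\max(-\pi\ga/2,\,\theta_k-\pi\ga_0),\,\theta_k]$. Its opening is at most $\pi\ga_0<\pi\omega$; the function $f$ is holomorphic and bounded on $\Sigma_{k+1}$, continuous up to its boundary, and $\M$-flat in its upper boundary direction $\theta_k$. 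Hence, after the rotation permitted by Remark~\ref{rema.bisect.direct.Mflat}, Lemma~\ref{lemma.Mflat.one.direction.small.opening} applies to $\Sigma_{k+1}$ and, taking loss parameter $\pi\ga_0/2$, furnishes constants $k_1^{(k+1)},k_2^{(k+1)}>0$ with $|f(z)|\le k_1^{(k+1)}e^{-M(1/(k_2^{(k+1)}|z|))}$ for $\arg(z)\in[\theta_{k+1},\theta_k]$, where $\theta_{k+1}:=\theta_k-\pi\ga_0/2$. In particular $f$ is $\M$-flat in the direction $\theta_{k+1}$, and the step can be repeated.

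Since each step lowers the argument by the fixed amount $\pi\ga_0/2>0$, after a finite number $N$ of iterations one reaches $\theta_N\le-\pi\ga/2+\de$; on the final step one simply shrinks $\Sigma_N$ so that its lower boundary equals $-\pi\ga/2$ and picks the loss parameter so that the flatness bound reaches exactly the direction $-\pi\ga/2+\de$. The finitely many ranges $[\theta_{k+1},\theta_k]$ then cover $[-\pi\ga/2+\de,\pi\ga/2]$, and putting $k_1:=\max_j k_1^{(j)}$ and $k_2:=\max_j k_2^{(j)}$ we obtain, using that $M$ is nondecreasing, the single estimate $|f(z)|\le k_1e^{-M(1/(k_2|z|))}$ throughout $\arg(z)\in[-\pi\ga/2+\de,\pi\ga/2]$, as claimed.

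There is no new analytic content here: the only genuine obstruction, namely the restriction $\ga<\omega(\M)$ imposed on Lemma~\ref{lemma.Mflat.one.direction.small.opening} by the \PL\ Theorem~\ref{Theo.Prag.Lind}, is removed by partitioning $S_\ga$ into subsectors of opening below $\pi\omega$. The one point to watch is the bookkeeping that keeps every $\Sigma_{k+1}$ inside $S_\ga$ and guarantees that finitely many of them cover the desired angular range; this is immediate because the total angle $\pi\ga-\de$ is finite while each interior step covers the positive angle $\pi\ga_0/2$.
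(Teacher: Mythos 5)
Your proposal is correct and is essentially the paper's own proof: both arguments cover the range of arguments by finitely many overlapping subsectors of opening below $\pi\o(\M)$ (the paper uses opening $\pi\o/2+\ep$ with step $\pi\o/2$, you use opening $\pi\o/2$ with step $\pi\o/4$), apply Lemma~\ref{lemma.Mflat.one.direction.small.opening} in each one after the rotation allowed by Remark~\ref{rema.bisect.direct.Mflat} to propagate $\M$-flatness from the top direction downwards, and then take maxima of the finitely many constants using that $M$ is nondecreasing. The only discrepancies are cosmetic bookkeeping (e.g.\ writing $\Sigma_{k+1}\ll S_\ga$ for subsectors whose closure touches $\partial S_\ga$, and the informal description of the last, truncated step), and the paper's own proof is equally informal at the corresponding points.
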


\begin{proof} For simplicity we write $\o=\o(\M)$, and put $\theta_0:= \pi\ga/2$. We can obviously choose a suitable natural number $m$ and directions $\theta_j\in(-\pi\ga/2,\pi\ga/2)$, $j=1,2,\dots, m$, such that
	\begin{align*}
	\theta_j&:=\theta_{j-1}-\pi\o/2, \quad  \theta_j\geq-\pi\ga/2+\de, \quad j=1,\dots, m-1,\\
\theta_m&\in(-\pi\ga/2,-\pi\ga/2+\de),\quad	 \theta_{m-1}-\theta_m<\pi\o/2.
	\end{align*}
	We fix $0<\ep< \pi\o/4$. 
Since $\theta_0-\theta_1+\ep<3\pi\o/4<\pi\o$, we can apply Lemma~\ref{lemma.Mflat.one.direction.small.opening} 
to the function
	$f$ restricted to the sector
	$S_1=\{ z\in\Ri:\, \arg(z)\in [\theta_1-\ep,\theta_0])\}$. We deduce that there exist constants $k_{1,1},k_{2,1}>0$ with
 $$|f(z)|\leq k_{1,1}e^{-M(1/(k_{2,1}|z|))},\qquad \arg(z)\in[\theta_1,\theta_0].  $$
	By recursively reasoning in the sectors
	$$S_j=\{z\in\Ri:\, \arg(z)\in [\theta_j-\ep,\theta_{j-1}])  \},\quad j=2,3,\dots, m-1,$$
and finally in the sector
		$$S_m=\{z\in\Ri:\, \arg(z)\in [\theta_m,\theta_{m-1}])  \},$$
we obtain constants $k_{1,j},k_{2,j}>0$	such that
$$
|f(z)|\leq k_{1,j}e^{-M(1/(k_{2,j}|z|))},\qquad \arg(z)\in[\theta_j,\theta_{j-1}].
$$
It is clear then that for $k_1:=\max_{j}k_{1,j}$ and $k_2:= \max_{j}k_{2,j}$ we have that
	 $$|f(z)|\leq k_1e^{-M(1/(k_2|z|))},\qquad \arg(z)\in[-\pi\ga/2+\de,\pi\ga/2].  $$
\end{proof}

In the next result we impose $\M$-flatness in both boundary directions of the sector, and conclude uniform $\M$-flatness throughout the sector.

%{\color{blue} part of Lemma X in FZ between Lemma 4 and Lemma 5}

\begin{lemma}\label{lemma.Mflat.two.directions}
Let $\M$ and $\ga>0$ be given. Suppose $f$ is a bounded holomorphic function in $S_{\ga}$ that admits a continuous extension to the boundary $\partial S_\ga$, and that is $\M$-flat in directions $d=\pi\ga/2$ and $-d$.
Then there exist constants $k_1,k_2>0$ with
\begin{equation}\label{eqUniformMExponDecrease}
|f(z)|\leq k_1e^{-M(1/(k_2|z|))},\qquad \arg(z)\in[-\pi\ga/2,\pi\ga/2].  \end{equation}
\end{lemma}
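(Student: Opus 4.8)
The plan is to reduce this two-sided statement to the one-sided Lemma~\ref{lemma.Mflat.one.direction.large.opening}, applied once for each boundary direction, and then to glue the two resulting estimates. Since that lemma already handles arbitrary openings $\ga>0$, no further appeal to the \PL\ theorem is needed here: the entire content is a covering-and-gluing argument.

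First I would fix a parameter $\de$ with $0<\de<\pi\ga/2$. Applying Lemma~\ref{lemma.Mflat.one.direction.large.opening} directly, using the hypothesis that $f$ is $\M$-flat in direction $d=\pi\ga/2$, produces constants $k_{1,1},k_{2,1}>0$ with
$$
|f(z)|\leq k_{1,1}e^{-M(1/(k_{2,1}|z|))},\qquad \arg(z)\in[-\pi\ga/2+\de,\pi\ga/2].
$$
To exploit the flatness in the opposite direction $-d=-\pi\ga/2$, I would pass to the Schwarz reflection $\tilde f(z):=\overline{f(\overline{z})}$, which is again holomorphic and bounded in $S_\ga$, continuous up to $\partial S_\ga$, and, since $|\tilde f(z)|=|f(\overline{z})|$, is $\M$-flat in direction $\pi\ga/2$ (the flatness of $f$ at $-\pi\ga/2$ being transported to the upper boundary, cf. Remark~\ref{rema.bounded.function.null.asymp.expan.dir}). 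Applying the same lemma to $\tilde f$ and undoing the reflection $z=\overline{w}$ then yields constants $k_{1,2},k_{2,2}>0$ with
$$
|f(z)|\leq k_{1,2}e^{-M(1/(k_{2,2}|z|))},\qquad \arg(z)\in[-\pi\ga/2,\pi\ga/2-\de].
$$

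The last step is to combine the two bounds. Because $\de<\pi\ga/2$, one has $-\pi\ga/2+\de<\pi\ga/2-\de$, so the two arcs $[-\pi\ga/2+\de,\pi\ga/2]$ and $[-\pi\ga/2,\pi\ga/2-\de]$ overlap and together cover the full closed sector $[-\pi\ga/2,\pi\ga/2]$. Setting $k_1:=\max(k_{1,1},k_{1,2})$ and $k_2:=\max(k_{2,1},k_{2,2})$, and using that $M$ is nondecreasing — whence $e^{-M(1/(k_2|z|))}\geq e^{-M(1/(k_{2,i}|z|))}$ for $i=1,2$ — each of the two estimates is dominated by $k_1e^{-M(1/(k_2|z|))}$ on its own arc, and therefore~\eqref{eqUniformMExponDecrease} holds throughout the sector.

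There is essentially no obstacle beyond bookkeeping: the only points requiring a moment's care are the choice $\de<\pi\ga/2$, which guarantees that the two one-sided sectors genuinely overlap rather than merely abut, and the monotonicity of $M$, which allows a single pair $(k_1,k_2)$ to serve uniformly on both pieces. Alternatively, instead of the reflection, one could invoke the direction-independence noted in Remark~\ref{rema.bisect.direct.Mflat} to obtain the second estimate directly.
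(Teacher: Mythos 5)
Your proposal is correct and takes essentially the same route as the paper: its proof likewise applies Lemma~\ref{lemma.Mflat.one.direction.large.opening} once for each boundary direction (obtaining the decay on the arcs $[0,\pi\ga/2]$ and $[-\pi\ga/2,0]$) and concludes by setting $k_1:=\max\{k_{1,1},k_{1,2}\}$ and $k_2:=\max\{k_{2,1},k_{2,2}\}$, using the monotonicity of $M$. The only differences are cosmetic: you take overlapping arcs ($\de<\pi\ga/2$) instead of the paper's abutting ones ($\de=\pi\ga/2$), and you justify the application at the lower edge by an explicit Schwarz reflection, a symmetry the paper invokes implicitly.
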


\begin{proof}
By Lemma~\ref{lemma.Mflat.one.direction.large.opening},  there exist constants $k_{1,1},k_{2,1},k_{1,2},k_{2,2}>0$ such that
$$|f(z)|\leq k_{1,1}e^{-M(1/(k_{2,1}|z|))},\qquad \arg(z)\in[0,\pi\ga/2]  $$
and
$$|f(z)|\leq k_{1,2}e^{-M(1/(k_{2,2}|z|))},\qquad \arg(z)\in[-\pi\ga/2,0].  $$
We conclude taking $k_1:=\max\{k_{1,1},k_{1,2}\}$ and $k_2:=\max\{k_{2,1},k_{2,2}\}$.
\end{proof}

\begin{rema}\label{rema.bounded.sector.Mflat} By carefully inspecting its proof, we see that Lemma~\ref{lemma.Mflat.one.direction.small.opening} holds true in any bounded sector $S(d,\ga,r)$ and, consequently, Lemma~\ref{lemma.Mflat.one.direction.large.opening} and Lemma~\ref{lemma.Mflat.two.directions} are also valid in bounded sectors.
\end{rema}

%{\color{blue} part of Proposition 6 in FZ. Prop6.(2) and Prop6.(1) without applying Watson's Lemma}

We show next that, as Remark~\ref{rema.bounded.sector.Mflat} suggests, it is also possible to work in sectorial regions.

\begin{prop}%[Extension of $\M$-flatness to a sectorial region]
\label{lemma.Mflat.one.direction.sectorial.regions}
Let $\M$ and $\ga>0$ be given. Suppose $f$ is holomorphic in a sectorial region $G_{\ga}$, bounded in every $T\ll G$, and $\M$-flat in a direction $\theta$ in $G_{\ga}$.
Then, for every $T\ll G_\ga$ there exist
constants $k_1(T),k_2(T)>0$ with
\begin{equation}\label{ineq.Mflat.subsector}
|f(z)|\leq k_1e^{-M(1/(k_2|z|))},\qquad z\in T.
\end{equation}
\end{prop}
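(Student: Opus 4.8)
The plan is to deduce this from the bounded-sector version of Lemma~\ref{lemma.Mflat.one.direction.large.opening}, exploiting that a sectorial region, although not itself a sector, contains genuine sectors of small radius around every interior direction. Fix $T\ll G_\ga$, write the arguments of $\overline{T}$ as $[\psi_1,\psi_2]\subset(-\pi\ga/2,\pi\ga/2)$ and its radius as $\rho_0$, and let $\theta$ be the flat direction. Since the estimate is only delicate near the vertex, I would first reduce to small $|z|$: as $f$ is bounded on $T$ (because $T\ll G_\ga$) by some constant $K$, and $t\mapsto M(t)$ is nondecreasing, for any prescribed $\rho^\ast>0$ and $k_2>0$ one has $|f(z)|\le K\le\big(Ke^{M(1/(k_2\rho^\ast))}\big)e^{-M(1/(k_2|z|))}$ whenever $\rho^\ast\le|z|<\rho_0$. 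Thus it suffices to establish \eqref{ineq.Mflat.subsector} for $z\in T$ with $|z|<\rho^\ast$ and a conveniently small $\rho^\ast$, and then enlarge $k_1$.

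Next I would cover the directions of $T$ starting from the flat ray, recalling that flatness at a radial boundary propagates, by Lemma~\ref{lemma.Mflat.one.direction.large.opening}, to the whole sector except an arbitrarily thin wedge adjacent to the opposite boundary. Accordingly I choose auxiliary bounded sectors having $\theta$ as one radial boundary: if $\psi_1\le\theta\le\psi_2$, take $S^{+}$ with arguments in $[\theta,\psi_R]$ and $S^{-}$ with arguments in $[\psi_L,\theta]$, where $-\pi\ga/2<\psi_L<\psi_1$ and $\psi_2<\psi_R<\pi\ga/2$; if $\theta$ lies to one side of $[\psi_1,\psi_2]$, a single such sector suffices. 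Because $G_\ga$ is a sectorial region, for $\beta<\ga$ close enough to $\ga$ one has $S(0,\beta,\rho(\beta))\subset G_\ga$ with $[\psi_L,\psi_R]\subset(-\pi\beta/2,\pi\beta/2)$; taking $\rho^\ast<\min(\rho(\beta),r_\theta)$, with $r_\theta$ as in Remark~\ref{rema.bounded.function.null.asymp.expan.dir}, and giving $S^{\pm}$ radius $\rho^\ast$ guarantees $\overline{S^{\pm}}\subset G_\ga$. On each $S^{\pm}$ the hypotheses of the lemma then hold: $f$ is holomorphic and bounded there (a proper subsector of $G_\ga$), it is continuous up to $\partial S^{\pm}$ since it is holomorphic on the open set $G_\ga\supset\overline{S^{\pm}}$, and it is $\M$-flat in the boundary direction $\theta$, the bound $|f(z)|\le C_\theta e^{-M(1/(A_\theta|z|))}$ on $(0,r_\theta e^{i\theta}]\supset(0,\rho^\ast e^{i\theta}]$ being precisely the flatness of $f$ in direction $\theta$ (Remark~\ref{rema.bounded.function.null.asymp.expan.dir}).

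Now I would apply Lemma~\ref{lemma.Mflat.one.direction.large.opening}, in its bounded-sector and arbitrary-bisecting-direction form justified by Remarks~\ref{rema.bisect.direct.Mflat} and~\ref{rema.bounded.sector.Mflat}, to $f$ on $S^{+}$ with $\de<\psi_R-\psi_2$ and on $S^{-}$ with $\de<\psi_1-\psi_L$. This yields constants $k_{1,\pm},k_{2,\pm}>0$ with $|f(z)|\le k_{1,+}e^{-M(1/(k_{2,+}|z|))}$ for $\arg z\in[\theta,\psi_2]$, $|z|<\rho^\ast$, and $|f(z)|\le k_{1,-}e^{-M(1/(k_{2,-}|z|))}$ for $\arg z\in[\psi_1,\theta]$, $|z|<\rho^\ast$. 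Taking maxima of the two pairs of constants covers all of $[\psi_1,\psi_2]$ for $|z|<\rho^\ast$; combining with the trivial estimate for $\rho^\ast\le|z|<\rho_0$ from the first paragraph and enlarging $k_1$ accordingly gives \eqref{ineq.Mflat.subsector} throughout $T$.

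The main obstacle is geometric rather than analytic: unlike a genuine sector, a sectorial region need not contain a full sector of radius $\rho_0$ about the directions between $\theta$ and $T$, so one cannot simply enclose $T$ in a single auxiliary sector having $\theta$ on its boundary. Splitting the estimate into a near-vertex part (handled inside the small sectors $S^{\pm}\subset S(0,\beta,\rho(\beta))\subset G_\ga$ that the sectorial-region property does provide) and a part bounded away from the vertex (handled trivially by boundedness and the monotonicity of $M$) is exactly what circumvents this difficulty; the only remaining care is the bookkeeping of arguments ensuring that the wedges discarded by Lemma~\ref{lemma.Mflat.one.direction.large.opening} fall outside $[\psi_1,\psi_2]$.
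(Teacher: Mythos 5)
Your proof is correct and follows essentially the same route as the paper's: both reduce to small modulus via boundedness of $f$ and monotonicity of $M(t)$, flank the flat direction $\theta$ with two bounded auxiliary sectors slightly wider than $T$ and small enough in radius to sit inside $G_\ga$, and then invoke the bounded-sector form of Lemma~\ref{lemma.Mflat.one.direction.large.opening} (Remarks~\ref{rema.bisect.direct.Mflat} and~\ref{rema.bounded.sector.Mflat}) on each. The only cosmetic differences are the order of the steps and that the paper handles a flat direction outside the arguments of $T$ by enlarging $T$ rather than by your one-sector case distinction.
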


\begin{proof}
By suitably enlarging the opening of the subsector, we can assume that $\theta$ is one of the directions in $T$.
There exist $R,c_1,c_2>0$ with
\begin{equation}\label{ineq.Mflat.one.direction.bounded.ray}
|f(z)|\le c_1e^{-M(1/(c_2|z|))},\qquad \arg(z)=\theta, \qquad|z|\leq R.
\end{equation}
If $\theta_1<\theta_2$ are the (radial) boundary directions of $T$, we consider $\de>0$ such that $-\pi\ga/2<\theta_1-\de$ and  $\theta_2+\de<\pi\ga/2 $. There exists $0<r<R$ such that the sectors $S_1=\{z\in \Ri:\,|z|\leq r,\, \arg(z)\in[\theta_1-\de,\theta]\}$ and $S_2=\{z\in \Ri:\,|z|\leq r, \, \arg(z)\in[\theta,\theta_2+\de]\}$ are contained in $G_\ga$.
Taking into account \eqref{ineq.Mflat.one.direction.bounded.ray} and Remark~\ref{rema.bounded.sector.Mflat}, we can apply Lemma~\ref{lemma.Mflat.one.direction.large.opening}  to the restriction of $f$ to each sector and we conclude that $f$
is $\M$-flat for $\arg(z)\in[\theta_1,\theta_2]$ and $|z|\leq r$. Since $M(t)$ is nondecreasing, by suitably enlarging the constant $k_1$ we obtain~\eqref{ineq.Mflat.subsector}.
\end{proof}

\begin{exam} 
Boundedness of the considered function is necessary in any of the previous results in this section. The next example shows that having an $\M$-asymptotic expansion in a direction $d$ does not guarantee its validity in any sector containing that direction. Our inspiration comes from a similar example in W. Wasow's book~\cite[p.\ 38]{Wasow}, which concerned the function $f(z)=\sin(e^{1/z}) e^{-1/z}$.

Given $\M$, by Remark~\ref{rema.M.and.V.admsibility.condition} for every $\ga>0$ there exists $V\in MF(\ga,\ro(r))$ such that we have~\eqref{ineq.function.M.and.V}.
We consider the function $$f(z)=\sin(e^{V(1/z)}) e^{-V(1/z)}\qquad z\in S_{\ga}.$$
Since $\sin(e^{V(1/z)})$ is bounded for real $z>0$, we see that $f$ is $\M$-flat in direction $0$. If we compute the derivative of $f$ in $S_\ga$ we see that
\begin{align*}
f'(z)&=\frac{V'(1/z)}{z^2} \left(\sin(e^{V(1/z)}) e^{-V(1/z)}-\cos(e^{V(1/z)})\right) \\
&=\frac{V'(1/z)}{z V(1/z)} \frac{V(1/z)}{z}\left(\sin(e^{V(1/z)}) e^{-V(1/z)}-\cos(e^{V(1/z)})\right).
\end{align*}
Since for $z>0$ we have  $\lim_{z\to 0} (1/z)V'(1/z)/V(1/z)=1/\o(\M)$ (by property (VI), see~\cite[Prop.~1.2]{Maergoiz}) and $\lim_{z\to 0} V(1/z)/z=\oo$ (property (III)), we deduce that $\lim_{z\to 0} f'(z)$ does not exist.
By Remark~\ref{rema.limit.f.at.0}, $f$ can not have $\M$-asymptotic expansion in any sectorial region containing direction $0$. Consequently, $f$ is not $\M$-flat in any such sectorial region.
We note that, in particular, the example of Wasow corresponds to the Gevrey case of order 1, i.e., to the sequence $\M=(p!)_{p\in\N_0}$.
\end{exam}

\begin{rema}\label{rema.Beurling.case}
At this point it is worth saying a few words about a situation which, although not usually considered in the theory of asymptotic expansions, plays an important role in the general framework of ultradifferentiable or ultraholomorphic classes, namely that of the so-called Carleman classes of Beurling type. We will not give full details here, but let us say that a function $f$, holomorphic in a sectorial region $G$, has \emph{Beurling $\M$-asymptotic expansion $\hat{f}=\sum_{n=0}^\oo a_n z^n $ in a direction $\theta$} in $G$ if there exists $r_\theta>0$ such that the segment $(0,r_\theta e^{i\theta}]$ is contained in $G$, and for every $A_\theta>0$ (small) there exists $C_\theta>0$ (large) such that
for every $z\in (0,r_\theta e^{i\theta}]$ and every $p\in\N_0$ one has
\begin{equation*}%\label{eqDefinBeurlingMAsympExpanDirec}
\Big|f(z)-\sum_{n=0}^{p-1}a_nz^n \Big|\le C_\theta A_\theta^pM_{p}|z|^p.
\end{equation*}
Following the idea in Remark~\ref{rema.bounded.function.null.asymp.expan.dir}, one can prove that $f$, bounded throughout $G$, is Beurling $\M$-flat in direction $\theta$ if, and only if,  for every $c_2>0$ (small) there exist $c_1>0$ (large) such that for every $z\in G$ with $\arg(z)=\theta$ one has
\begin{equation}\label{eqBeurlingMFlatDirecBoundedFunction}
|f(z)|\le c_1 e^{-M(1/(c_2 |z|))}.
\end{equation}
Then, the following analogue of Lemma~\ref{lemma.Mflat.one.direction.small.opening} is valid: Given $\M$ and $0<\ga<\o(\M)$, suppose $f$ is a bounded holomorphic function in $S_{\ga}$ that admits a continuous extension to the boundary $\partial S_\ga$, and that is Beurling $\M$-flat in direction $d=\pi\ga/2$.
Then for every $0<\de<\pi\ga$ and every $k_2>0$, there exists a constant $k_1=k_1(\de,k_2)>0$ such that
 $$|f(z)|\leq k_1e^{-M(1/(k_2|z|))},\qquad \arg(z)\in[-\pi\ga/2+\de,\pi\ga/2].  $$
The proof of this statement follows the same lines as that of the original lemma, by carefully tracing the dependence of the different constants involved in the estimates. Indeed, the constants $A,B,\a,\b,\ep,\eta$ are determined in the same way. Choose $r_2>0$ such that $r_2/B>k_2^{-1/\o}$, and a point $a$ with the specified argument and modulus $(r_2/\eta)^{\o}$. Take a positive $d_2$ such that
$d_2>2|a|^{1/\o}/A$, and then $c_2>0$ such that $c_2<d_2^{-\o}$. By definition of Beurling $\M$-flatness in direction $\ga\pi/2$, there exists $c_1>0$ such that \eqref{eqBeurlingMFlatDirecBoundedFunction} holds for $\arg(z)=\ga\pi/2$. Then, the desired estimates hold for the same $k_1>0$ obtained in the proof of that lemma.\par

Note that also Lemma~\ref{lemma.Mflat.one.direction.large.opening}, Lemma~\ref{lemma.Mflat.two.directions} and Proposition~\ref{lemma.Mflat.one.direction.sectorial.regions} will be valid in this Beurling setting.
\end{rema}

\section{Watson's Lemmas}

%{\color{blue} part of Lemma X in FZ between Lemma 4 and Lemma 5, }

We will now obtain several quasianalyticity results by combining those in Subsections~\ref{subsectClassicalQuasianal} and~\ref{subsectQuasianalResultsProxOrders} with the results on the propagation of null asymptotics in Section~\ref{sectionFlatnessExtension}.

\begin{rema}\label{rema.null.uniform.asymp.expan.expon.decay}
In a similar way as in the proof of Theorem~\ref{theo.thillez.flat.function.M} (see~\cite{ThilliezSmooth}), it is easy to deduce that, given a bounded holomorphic function $f$ in a sector $S_{\ga}$ that admits a continuous extension to the boundary $\partial S_\ga$, the fact that $f\in\tilde{\mathcal{A}}_{\M}^u(S_\ga)$ and $f$ is $\M$-flat amounts to the existence of constants $k_1,k_2>0$ such that
\eqref{eqUniformMExponDecrease} holds.
\end{rema}

In the first version, an immediate consequence of previous information, we assume the function is flat at both boundary directions.

\begin{lemma}%[Watson's Lemma in two directions (partial version)]
\label{Lemma.Watson.two.direction}
Let $\M$ and $\ga>0$ be given, such that either $\ga>\o(\M)$,
or $\ga=\o(\M)$ and $\sum^{\oo}_{p=0} (m_p)^{-1/\o(\M)}$ diverges.
Suppose $f$ is a bounded holomorphic function in $S_{\ga}$ that admits a continuous extension to the boundary $\partial S_\ga$, and that is $\M$-flat in directions $d=\pi\ga/2$ and $-d$.
Then $f\equiv 0$.
\end{lemma}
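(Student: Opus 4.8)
The plan is to combine the uniform flatness conclusion of Lemma~\ref{lemma.Mflat.two.directions} with the quasianalyticity criterion for uniform asymptotics, Corollary~\ref{corollary.Watson.Lemma.Uniform.Bounds} (and its validity on bounded sectors noted in Remark~\ref{remaQuasianalUniformAsymptBoundedSectors}). Since $f$ is $\M$-flat in both boundary directions $d=\pi\ga/2$ and $-d$, and is bounded and holomorphic in $S_\ga$ with a continuous extension to $\partial S_\ga$, Lemma~\ref{lemma.Mflat.two.directions} applies directly and yields constants $k_1,k_2>0$ such that
$$|f(z)|\leq k_1e^{-M(1/(k_2|z|))},\qquad \arg(z)\in[-\pi\ga/2,\pi\ga/2].$$

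First I would invoke Remark~\ref{rema.null.uniform.asymp.expan.expon.decay}: the estimate~\eqref{eqUniformMExponDecrease} just obtained is precisely equivalent to saying that $f\in\tilde{\mathcal{A}}_{\M}^u(S_\ga)$ and $f$ is $\M$-flat. Thus $f$ is a flat element of the class $\tilde{\mathcal{A}}_{\M}^u(S_\ga)$. The hypothesis on $\ga$ — namely $\ga>\o(\M)$, or $\ga=\o(\M)$ with $\sum_{p\ge 0}(m_p)^{-1/\o(\M)}$ divergent — is exactly condition (iii) in Corollary~\ref{corollary.Watson.Lemma.Uniform.Bounds}, which is equivalent to condition (i), the quasianalyticity of $\tilde{\mathcal{A}}^u_{\M}(S_\ga)$. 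Quasianalyticity means the Borel map is injective on this class, equivalently that the only flat function in it is the zero function. Since $f$ is flat and lies in $\tilde{\mathcal{A}}^u_{\M}(S_\ga)$, we conclude $f\equiv 0$.

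One point to verify is that the function is defined on a \emph{bounded} sector while the cited results are stated for the unbounded sector $S_\ga$; this is handled because Lemma~\ref{lemma.Mflat.two.directions} and Remark~\ref{rema.null.uniform.asymp.expan.expon.decay} are phrased for $S_\ga$, and if $f$ were given only on a bounded sector one would use Remark~\ref{rema.bounded.sector.Mflat} together with the bounded-sector version of the quasianalyticity criterion from Remark~\ref{remaQuasianalUniformAsymptBoundedSectors}. Here the statement is already set in $S_\ga$, so no such adjustment is needed.

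I expect no serious obstacle: the entire argument is an assembly of results already established in the excerpt, and the main (genuine) difficulty — extending flatness from the boundary directions to the full sector with uniform control — has already been absorbed into Lemma~\ref{lemma.Mflat.two.directions}, whose proof in turn rests on the refined \PL\ argument of Lemma~\ref{lemma.Mflat.one.direction.small.opening}. The only care required is to match the divergence hypothesis on $\ga$ with the correct equivalent form of the Watson-type criterion, ensuring the chain of equivalences in Corollary~\ref{corollary.Watson.Lemma.Uniform.Bounds} is applied to the precise case at hand.
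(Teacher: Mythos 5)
Your proposal is correct and follows exactly the paper's own argument: apply Lemma~\ref{lemma.Mflat.two.directions} to get the uniform estimate \eqref{eqUniformMExponDecrease}, use Remark~\ref{rema.null.uniform.asymp.expan.expon.decay} to conclude $f\in\tilde{\mathcal{A}}^u_{\M}(S_\ga)$ with $f\sim_{\M}\hat{0}$, and then invoke the quasianalyticity criterion of Corollary~\ref{corollary.Watson.Lemma.Uniform.Bounds}. Your closing remark that no bounded-sector adjustment is needed, since the statement is already posed on the unbounded sector $S_\ga$, is also accurate.
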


\begin{proof} By Lemma~\ref{lemma.Mflat.two.directions}, we know that \eqref{eqUniformMExponDecrease} holds for suitable $k_1,k_2>0$. The previous remark implies that
$f\in\tilde{\mathcal{A}}^u_{\M}(S_{\ga})$ and  $f\sim_{\M}\hat{0}$, and by Corollary~\ref{corollary.Watson.Lemma.Uniform.Bounds} we deduce that $f\equiv0 $.
\end{proof}

%{\color{blue} Lemma 5 in FZ, partial}
In the second, improved version we assume only that the function is flat in one of the boundary directions.

\begin{lemma}%[Watson's Lemma in one direction (partial version)]
\label{Lemma.Watson.one.direction} 
Assume the same hypotheses as in Lemma~\ref{Lemma.Watson.two.direction}, except that now $f$ is $\M$-flat only in direction $d=\pi\ga/2$.
Then $f\equiv 0$.
\end{lemma}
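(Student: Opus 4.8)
The plan is to reduce the statement to the two-directional version already proved, Lemma~\ref{Lemma.Watson.two.direction}, by first spreading the one-sided flatness into the bulk of the sector. First I would apply Lemma~\ref{lemma.Mflat.one.direction.large.opening}: since $f$ is bounded, continuous up to $\partial S_\ga$ and $\M$-flat in direction $\pi\ga/2$, for every $0<\de<\pi\ga$ there are $k_1(\de),k_2(\de)>0$ with $|f(z)|\le k_1 e^{-M(1/(k_2|z|))}$ whenever $\arg(z)\in[-\pi\ga/2+\de,\pi\ga/2]$. In particular $f$ is then $\M$-flat in \emph{both} boundary directions $-\pi\ga/2+\de$ and $\pi\ga/2$ of the sector $S'_\de$ of opening $\pi(\ga-\de/\pi)$ bisected by $\de/2$, and it remains bounded and continuous up to $\partial S'_\de$.

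In the range $\ga>\o(\M)$ this finishes the proof immediately. I would choose $\de>0$ so small that $\ga-\de/\pi>\o(\M)$; then $S'_\de$ satisfies the hypotheses of Lemma~\ref{Lemma.Watson.two.direction}, which is insensitive to the bisecting direction (cf.\ Remark~\ref{rema.Watson.Lemma.bisect.direct}), so $f\equiv0$ on $S'_\de$. As $S'_\de$ is a nonempty open subset of the connected set $S_\ga$ and $f$ is holomorphic there, the identity principle yields $f\equiv0$ on all of $S_\ga$.

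The hard part is the borderline case $\ga=\o(\M)$ with $\sum_{p}(m_p)^{-1/\o(\M)}=\oo$. Here the reduction above fails: the propagated flat sector $S'_\de$ has opening $\pi(\ga-\de/\pi)<\pi\o(\M)$, so it is \emph{subcritical} and its class is \emph{not} quasianalytic, while letting $\de\to0$ only yields non-uniform flatness on the full-opening region, where quasianalyticity also fails (Theorem~\ref{TheoWatsonlemmaProxOrder}). What must still be exploited is that flatness in the privileged direction $\pi\ga/2$ holds with a type $1/c_2$ that does \emph{not} degrade, whereas towards the opposite boundary only boundedness is at hand. I would encode this by transferring to the half-plane through $w=z^{-1/\ga}$: then $g(w):=f(w^{-\ga})$ is holomorphic and bounded on $H=\{\Re w>0\}$, and, using $e^{-M(t)}=\inf_{p}M_p t^{-p}$, the flatness of $f$ in direction $\pi\ga/2$ becomes, on the boundary ray $\arg(w)=-\pi/2$, the Mandelbrojt-type bound $|g(w)|\le c_1 c_2^{\,p}M_p|w|^{-\ga p}$ for every $p\in\N_0$.

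I expect the final step to be the genuine obstacle: one must upgrade these bounds, imposed only on one boundary ray of $H$ together with mere boundedness on the rest, to a situation to which Mandelbrojt's theorem (Theorem~\ref{theo.Mandelbrojt}) applies, the relevant quasianalyticity hypothesis being exactly the assumed divergence of $\sum_p (m_p)^{-1/\ga}$. A \PL-type harmonic-majorant estimate for the subharmonic function $\log|g|$ on $H$ reproduces the imposed decay near the lower boundary but degenerates to the constant $\log K$ near the upper one, so the decay throughout $H$ needed to invoke Theorem~\ref{theo.Mandelbrojt} verbatim is not automatic; the crux is to use the borderline divergence to absorb this loss, for instance by recovering uniform flatness up to the full opening and then applying the uniform Watson lemma on a bounded sector (Remark~\ref{remaQuasianalUniformAsymptBoundedSectors} together with Corollary~\ref{corollary.Watson.Lemma.Uniform.Bounds}).
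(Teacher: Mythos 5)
Your treatment of the case $\ga>\o(\M)$ is correct and essentially the paper's own argument: you propagate the flatness via Lemma~\ref{lemma.Mflat.one.direction.large.opening} into a sector whose opening still exceeds $\pi\o(\M)$ and then invoke quasianalyticity there (the paper goes directly through Remark~\ref{rema.null.uniform.asymp.expan.expon.decay} and Corollary~\ref{corollary.Watson.Lemma.Uniform.Bounds} instead of routing through Lemma~\ref{Lemma.Watson.two.direction}, but this is the same reasoning).

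In the critical case $\ga=\o(\M)$, however, there is a genuine gap, and you acknowledge it yourself: after transferring to the half-plane you have Mandelbrojt-type bounds on only one boundary ray of $H$ plus mere boundedness elsewhere, and you concede that a harmonic-majorant/\PL\ estimate degenerates near the other boundary, leaving only the hope of ``recovering uniform flatness up to the full opening'' with no mechanism to do so. No such soft argument can succeed: as Remark~\ref{rema.bisect.direct.Mflat} makes explicit, the type $k_2(\de)$ produced by Lemma~\ref{lemma.Mflat.one.direction.large.opening} blows up as $\de\to0$, so the propagated estimates never glue into a uniform flat bound on $S_{\o(\M)}$, and every sector on which they are uniform is subcritical. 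The paper's missing device is multiplicative rather than harmonic: taking $V\in MF(2\o,\ro(r))$ as in Remark~\ref{rema.M.and.V.admsibility.condition} and a point $a$ with $\arg(a)=\o\pi/4$ and $|a|$ sufficiently small, one considers $F(z):=f(z)e^{V(a/z)}$ on $S_{\o}$. For $\arg(z)\in[-\pi\o/2,-3\pi\o/8]$, where only boundedness of $f$ is available, one has $\arg(a/z)/\o\in[5\pi/8,3\pi/4]$, hence $\Re V(a/z)\le\big(\cos(5\pi/8)+\ep\big)|a|^{1/\o}V(1/|z|)\le-|a|^{1/\o}V(1/|z|)/3$, so the exponential factor itself supplies the decay; for $\arg(z)\in[-3\pi\o/8,\pi\o/2]$, the growth $e^{2|a|^{1/\o}V(1/|z|)}$ of the factor is absorbed by the flatness of $f$ propagated by Lemma~\ref{lemma.Mflat.one.direction.large.opening} with $\de=\pi\o/8$, because $|a|$ is chosen small relative to the corresponding type. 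Thus $F$ is uniformly $\M$-flat on the \emph{full} critical sector $S_{\o}$, and the divergence of $\sum_{p}(m_p)^{-1/\o(\M)}$ is used exactly once, through Corollary~\ref{corollary.Watson.Lemma.Uniform.Bounds} applied to $F$; then $F\equiv0$ forces $f\equiv0$. Without this construction (or an equivalent one) your proof does not close.
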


\begin{proof}
For simplicity we write $\o=\o(\M)$. The argument is simple if $\ga>\o$: We fix $\o<\mu<\ga$ and $\de=(\ga-\mu)\pi>0$. By Lemma~\ref{lemma.Mflat.one.direction.large.opening}
we know that there exist constants $k_1(\de),k_2(\de)>0$ with
\begin{equation*}
 |f(z)|\leq k_1e^{-M(1/(k_2|z|))},\qquad \arg(z)\in[\pi\ga/2-\mu\pi,\pi\ga/2].
\end{equation*}
Then, Remark~\ref{rema.null.uniform.asymp.expan.expon.decay} implies that
$f\in\tilde{\mathcal{A}}^u_{\M}(S)$, with $S=\{z\in \Ri:\, \arg(z)\in (\pi\ga/2-\mu\pi, \pi\ga/2)\}$ and  $f\sim_{\M}\hat{0}$. Since $\mu>\o$, we can apply Corollary~\ref{corollary.Watson.Lemma.Uniform.Bounds} to the function $f$ in $S$ (see also Remark~\ref{rema.Watson.Lemma.bisect.direct}),
 and we deduce that $f\equiv 0$.

If $\ga=\o$ we fix $\de=\pi \o/8>0$. Lemma~\ref{lemma.Mflat.one.direction.large.opening} ensures there exist $k_1(\de),k_2(\de)>0$ with
\begin{equation}\label{ineq.Mflat.almost.all.sector}
 |f(z)|\leq k_1e^{-M(1/(k_2|z|))},\qquad \arg(z)\in[-3\pi\o/8,\pi\o/2].
\end{equation}
As in the proof of Lemma~\ref{lemma.Mflat.one.direction.small.opening}, since $\M$ admits a nonzero proximate order $\ro(r)$, there exist $V\in MF(2\o,\ro(r))$ and positive constants $A,B,t_0$ such that we have~\eqref{ineq.function.M.and.V}.
Choose $q_2>0$ such that $k_2^{-1/\o}>q_2$, and take $a\in\Ri$ such that
$$
\arg(a)=\frac{\o \pi}{4}, \qquad 0<|a|<\left( \frac{A q_2}{2}\right)^{\o}.
$$
We observe that for every $z$ with $\arg(z) \in[-\pi\o/2,\pi\o/2 ]$ one has
$$
\arg(a/z)\in [-\pi\o/4,3\pi\o/4] \en (-\pi\o/2,\pi\o).
 $$
Using property (I) of the functions in $MF(2\o,\ro(r))$, we see that
  $$\lim_{|z|\to 0} \Re\left( \frac{V(a/z)}{|a|^{1/\o} V(1/|z|)} \right)=  \cos((\arg(a)-\arg(z))/\o)$$
uniformly for $\arg(z)\in [-\pi\o/2,\pi\o/2]$. We fix $0<\ep<1$ such that
$$\cos(3\pi/4) + \ep\leq\cos(5\pi/8)+\ep \leq -1/3 <0.$$
We deduce that we have~\eqref{inequality.realpart.V.below} and~\eqref{inequality.realpart.V.above}
for $\arg(z)\in [-\pi\o/2,\pi\o/2]$  and $|z|<s_1$, small enough and subject to the restriction $s_1<1/(t_0k_2)$. Consider the function
$$F(z):=f(z)e^{V(a/z)}, \qquad \arg(z) \in[-\pi\o/2,\pi\o/2].$$
Then we see that $F(z)$ is holomorphic in $S_{\o}$
and continuous in $\overline{S_{\o}}$.

If $\arg(z)\in[-\pi\o/2, -3\pi\o/8]$,
we have that $\arg(a/z)\in[5\pi\o/8,3\pi\o/4]$. Then, since $f(z)$ is bounded by $K>0$ in $\overline{S_\o}$ and
  using~\eqref{inequality.realpart.V.above} for $|z|<s_1$, one has
$$|F(z)|\leq K e^{\Re(V(a/z))} \leq K e^{(\cos(5\pi/8)+\ep)|a|^{1/\o} V(1/|z|)} \leq   K e^{-|a|^{1/\o} V(1/|z|)/3}.$$
Using property (I) of the functions in $MF(2\o,\ro(r))$ we see that
 $$\lim_{|z|\to 0} \frac{V((|a|/(3B)^\o)(1/2|z|))}{(|a|^{1/\o}/(3B)) V(1/|z|)} = (1/2)^{1/\o}<1.$$
We define $b_2:=(|a|/(3B)^\o)/2$. Then for $|z|<s_2<\min(s_1,b_2/t_0)$, small enough, we have that
$$|F(z)|\leq K e^{-BV(b_2/|z|)} , \qquad |z|<s_2, \qquad \arg(z)\in[-\pi\o/2, -3\pi\o/8].$$
Using~\eqref{ineq.function.M.and.V}, we see that
\begin{equation}\label{eq.F.Mflat.little.z}
|F(z)|\leq K e^{- M(b_2/|z|)} , \qquad |z|<s_2, \qquad \arg(z)\in[-\pi\o/2, -3\pi\o/8].
\end{equation}
We define $ C= \max\{\Re(V(a/z)):\, |z|\geq s_2, \, -\pi\o/2 \leq\arg(z)\leq  -3\pi\o/8\} $ and we take
$$c_1:=K \max \{\exp(C), 1\}<\oo.$$
 Then, since $M(t)\geq 0$ we have that
\begin{equation}\label{eq.F.Mflat.big.z}
|F(z)|\leq c_1 \leq c_1 e^{M(b_2/|z|)} \qquad |z|\geq s_2, \qquad \arg(z) \in[-\pi\o/2, -3\pi\o/8].
\end{equation}
Since $c_1\geq K$, from~\eqref{eq.F.Mflat.little.z} and~\eqref{eq.F.Mflat.big.z} we deduce that $F$ is $\M$-flat for $\arg(z)\in[-\pi\o/2, -3\pi\o/8]$.

If $\arg(z)\in[-3\pi\o/8, \pi\o/2]$, we have that $\arg(a/z)\in[-\pi\o/4,5\pi\o/8]$. Using~\eqref{ineq.function.M.and.V},
~\eqref{inequality.realpart.V.above}   and
~\eqref{ineq.Mflat.almost.all.sector}, for $|z|<s_1$ we see that
$$|F(z)| \leq k_1 e^{-M(1/(k_2|z|))} e^{(\cos(\arg(a/z)/\o)+\ep)|a|^{1/\o} V(1/|z|)} \leq
k_1 e^{-AV(1/k_2|z|)+2|a|^{1/\o} V(1/|z|)}.$$
Now, property (I) of the functions in $MF(2\o,\ro(r))$ lets us write
 $$\lim_{|z|\to 0} \frac{V(1/k_2|z|)}{V(1/|z|)} =k_2^{-1/\o},$$
so that, for $|z|<s_3\leq s_2$ small enough,  we have that $V(1/k_2|z|)\geq q_2 V(1/|z|)$. We conclude that
$$|F(z)| \leq k_1 e^{ (-A q_2 +2 |a|^{1/\o}) V(1/|z|)}, \qquad |z|<s_3,\quad \arg(z)\in[-3\pi\o/8, \pi\o/2].$$
Since $|a|$ has been chosen small enough in order that $-A q_2 +2 |a|^{1/\o}<0 $,
proceeding as before, we find that $F(z)$ is $\M$-flat for $\arg(z)\in[-3\pi\o/8, \pi\o/2]$.

Consequently, $F$ verifies estimates of the type~\eqref{eqUniformMExponDecrease} in $\overline{S_{\o}}$
and, by Remark~\ref{rema.null.uniform.asymp.expan.expon.decay},
$F\in\tilde{\mathcal{A}}^u_{\M}(S_\o)$ and  $F\sim_{\M}\hat{0}$. Since  $\sum^{\oo}_{p=0} (m_p)^{-1/\o(\M)}$  is assumed to be divergent, we can apply Corollary~\ref{corollary.Watson.Lemma.Uniform.Bounds} to the function $F(z)$  in $S_\o$, and deduce that $F(z)\equiv 0$ and $f\equiv 0$.
\end{proof}

In the proof of Lemma~\ref{Lemma.Watson.one.direction} we need to distinguish two situations: in case $\ga>\o(\M)$ we have been given an $\M$-flat function $f$ in a wide enough sector (what entails uniqueness), while in case $\ga=\o(\M)$ an $\M$-flat function $F$ in a sector of opening $\pi\o(\M)$ has to be constructed in order to apply  Corollary~\ref{corollary.Watson.Lemma.Uniform.Bounds}, what is possible thanks to the additional assumption on the series $\sum^{\oo}_{p=0} (m_p)^{-1/\o(\M)}$.

It is interesting to note that in the Gevrey case the aforementioned series diverges, so that the previous result extends Lemma 5 in~\cite{FruchardZhang}. Indeed, in that instance the very divergence of the series allows one to treat the case  $\ga>\o(\M)$ by restricting the function to a sector with  $\ga=\o(\M)$, an argument which is not available in our situation.

\begin{rema}\label{rema.partial.version}
In most situations we can obtain converse statements to Lemma~\ref{Lemma.Watson.two.direction} and Lemma~\ref{Lemma.Watson.one.direction}. Observe that if $\ga<\o(\M)$ and we take $\ga<\mu<\o(\M)$, by Corollary~\ref{corollary.Watson.Lemma.Uniform.Bounds} we know there exists a nontrivial $\M$-flat function
$f\in \tilde{\mathcal{A}}^u_{\M}(S_\mu)$. Then (the restriction of) $f$ is a bounded holomorphic function in $S_{\ga}$ that admits a continuous extension to the boundary $\partial S_\ga$, and that is $\M$-flat in directions $d=\pi\ga/2$ and $-d$.\par
Analogously, if $\ga=\o$ and $\sum^{\oo}_{p=0} ((p+1)m_p)^{-1/(\o(\M)+1)}$ converges, we deduce that $\sum^{\oo}_{p=0} (m_p)^{-1/\o(\M)}$ converges too. So, by
Corollary~\ref{Coro.Salinas} there exists a nontrivial $\M$-flat function $f\in \mathcal{A}_{\M}(S_{\o(\M)})$. Since the derivatives of $f$
are Lipschitzian, one may continuously extend $f$ to the boundary of $S_{\o(\M)}$ preserving the estimates, and again obtain that $f$ is $\M$-flat in directions $\pi\o(\M)/2$ and $-\pi\o(\M)/2$.

However, the converse of Lemma~\ref{Lemma.Watson.two.direction} and Lemma~\ref{Lemma.Watson.one.direction} fails in case $\ga=\o(\M)$, the series $\sum^{\oo}_{p=0} (m_p)^{-1/\o(\M)}$ converges and
$\sum^{\oo}_{p=0} ((p+1)m_p)^{-1/(\o(\M)+1)}$ diverges (for instance, this is the situation for the sequence $\M_{1,3/2}$, see the Examples~\ref{exampleSequences}(i)). Although nontrivial $\M$-flat functions in $\tilde{\mathcal{A}}^u_{\M}(S_{\o(\M)})$ exist in this situation, there is no warranty that they can be continuously extended to the boundary of the sector.
\end{rema}

%{\color{blue} Second part of prop6 in FZ }

Finally, we provide a version of Watson's Lemma for functions in  sectorial regions which are flat in a direction.

\begin{prop}%[Watson's Lemma in a direction for nonuniform bounds]
Let $\M$ and $\ga>0$ be given with $\ga>\o(\M)$.
Suppose $f$ is holomorphic in a sectorial region $G_{\ga}$, bounded in every $T\ll G$, and $\M$-flat in a direction $\theta$ in $G_{\ga}$.
Then $f\equiv 0$.
\end{prop}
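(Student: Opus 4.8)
The plan is to reduce the statement to the quasianalyticity criterion of Theorem~\ref{TheoWatsonlemmaProxOrder} by first upgrading the $\M$-flatness in the single direction $\theta$ to genuine $\M$-flatness throughout the whole sectorial region. First I would invoke Proposition~\ref{lemma.Mflat.one.direction.sectorial.regions}: since $f$ is holomorphic in $G_{\ga}$, bounded in every $T\ll G_\ga$, and $\M$-flat in the direction $\theta$, that proposition yields, for each bounded proper subsector $T\ll G_\ga$, constants $k_1(T),k_2(T)>0$ such that $|f(z)|\leq k_1 e^{-M(1/(k_2|z|))}$ for all $z\in T$. Note that this first step does not use $\ga>\o(\M)$; it holds for every $\ga>0$, the hypothesis on the opening entering only at the final stage.

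Next I would translate these exponential bounds into membership in the flat part of the class. By Theorem~\ref{theo.thillez.flat.function.M}, the existence on every $T\ll G_\ga$ of constants $c_1,c_2>0$ with such a decay estimate is \emph{equivalent} to the conjunction $f\in\tilde{\mathcal{A}}_{\M}(G_\ga)$ and $f$ flat, i.e.\ $f\sim_{\M}\hat{0}$. Hence $f$ is a flat element of $\tilde{\mathcal{A}}_{\M}(G_\ga)$.

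Finally, since by hypothesis $\ga>\o(\M)$, Theorem~\ref{TheoWatsonlemmaProxOrder} guarantees that $\tilde{\mathcal{A}}_{\M}(G_\ga)$ is quasianalytic, that is, it contains no nontrivial flat functions. Therefore $f\equiv 0$, as desired.

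The deduction itself is short precisely because the substance of the argument has already been carried out upstream: the propagation of null asymptotics from one direction to an entire proper subsector (Proposition~\ref{lemma.Mflat.one.direction.sectorial.regions}, ultimately resting on the refined \PL\ estimate of Lemma~\ref{lemma.Mflat.one.direction.small.opening}), and the quasianalyticity characterization via proximate orders. I do not expect a genuine obstacle here; the only point demanding care is to confirm that the per-subsector estimates produced by Proposition~\ref{lemma.Mflat.one.direction.sectorial.regions}, whose constants depend on $T$, match exactly the definition of a (non-uniform) flat $\M$-asymptotic expansion in $G_\ga$, which is the content of the equivalence in Theorem~\ref{theo.thillez.flat.function.M}. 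In particular no uniformity across subsectors is required, which is why the weaker, direction-wise flatness hypothesis suffices to feed Theorem~\ref{TheoWatsonlemmaProxOrder}. Since $G_\ga$ is bisected by $0$, no appeal to the rotational invariance recorded in Remark~\ref{rema.Watson.Lemma.bisect.direct} is needed.
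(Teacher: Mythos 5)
Your proof is correct and follows exactly the same route as the paper's: Proposition~\ref{lemma.Mflat.one.direction.sectorial.regions} to propagate flatness to every $T\ll G_\ga$, Theorem~\ref{theo.thillez.flat.function.M} to identify $f$ as a flat element of $\tilde{\mathcal{A}}_{\M}(G_\ga)$, and then quasianalyticity from Theorem~\ref{TheoWatsonlemmaProxOrder} since $\ga>\o(\M)$. Your side observations (that the opening hypothesis enters only at the last step, and that the subsector-dependent constants are precisely what the non-uniform definition requires) are accurate and consistent with the paper.
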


\begin{proof}
Using Proposition~\ref{lemma.Mflat.one.direction.sectorial.regions} we know that for every $T\ll G_\ga$ we have \eqref{ineq.Mflat.subsector} for suitable $k_1,k_2>0$ depending on $T$ and for every $z\in T$.
Then, Theorem~\ref{theo.thillez.flat.function.M} implies that
$f\in\tilde{\mathcal{A}}_{\M}(G_\ga)$ and  $f\sim_{\M}\hat{0}$, and Theorem~\ref{TheoWatsonlemmaProxOrder} leads to the conclusion.
\end{proof}

\begin{rema}
By Theorem~\ref{TheoWatsonlemmaProxOrder}, if $\ga\leq\o$ we can find a nontrivial function $f\in\tilde{\mathcal{A}}_{\M}(G_\ga)$ such that  $f\sim_{\M}\hat{0}$, so it is bounded on every proper bounded subsector $T$ of $G_\ga$ and $\M$-flat in any direction $\theta_0\in(-\pi\ga/2,\pi\ga/2)$. Consequently, in this situation we have a complete version of Watson's Lemma.
\end{rema}

\section{Asymptotic expansion extension}

The next result (see~\cite[Theorem\ 6.1]{SanzFlat}) was stated for strongly regular sequences $\M$ such that $d_{\M}$ is a proximate order. However, as it is deduced from~\cite[Remark 4.11.(iii)]{SanzFlat} and \cite[Remark~4.15]{JimenezSanzSchindl}, it is enough to ask for the sequence to satisfy our two general assumptions (see Section~\ref{sectionFlatnessExtension}).

\begin{theo}[Generalized Borel--Ritt--Gevrey theorem]\label{theoBorelRittGevreySanz}
Let $\M$ and $\ga>0$ be given. The following statements are equivalent:
\begin{itemize}
\item[(i)] $\gamma\le\omega(\M)$,
\item[(ii)] For every $\hat{f}=\sum_{p\in\N_0} a_p z^p\in\C[[z]]_{\M}$ there exists a function $f\in\tilde{\mathcal{A}}_{\bM}(S_{\gamma})$ such that $$f\sim_{\M}\hat{f},$$
    i.e., $\tilde{\mathcal{B}}(f)=\hat{f}$. In other words, the Borel map $\tilde{\mathcal{B}}:\tilde{\mathcal{A}}_{\M}(S_{\gamma})\longrightarrow \C[[z]]_{\M}$ is surjective.
\end{itemize}
\end{theo}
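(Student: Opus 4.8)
The plan is to prove the two implications separately; throughout I use our standing assumption that $\M$ admits a nonzero proximate order $\ro(r)$, with $\lim_{r\to\oo}\ro(r)=1/\o(\M)$, and the associated Maergoiz functions furnished by Theorem~\ref{propanalproxorde}.

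\emph{Sufficiency $(i)\Rightarrow(ii)$, setting up the construction.} Fix $\ga\le\o(\M)$ and pick $V\in MF(\ga,\ro(r))$. The key point is that $e^{-V(1/z)}$ is $\M$-flat on $S_\ga$ precisely because $\ga\le\o(\M)$: writing $1/z=(1/|z|)e^{-i\arg(z)}$ and applying property (I), one gets $V(1/z)/V(1/|z|)\to e^{-i\arg(z)/\o(\M)}$ as $z\to0$, so on each $T\ll S_\ga$, where $|\arg(z)|\le\b\pi/2$ with $\b<\ga\le\o(\M)$, the real part $\Re V(1/z)$ is bounded below by a positive multiple of $V(1/|z|)$, hence (Remark~\ref{rema.M.and.V.admsibility.condition}) of $M(1/(c|z|))$. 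Given $\hat f=\sum_p a_pz^p\in\C[[z]]_\M$ with $|a_p|\le CA^pM_p$, I would set
\[
f(z):=\sum_{p=0}^{\oo} a_pz^p\bigl(1-e^{-V(\lambda_p/z)}\bigr),
\]
with scales $\lambda_p\downarrow0$ tuned (in practice through the quotients $m_p$). As $z\to0$ each factor tends to $1$, leaving the formal expansion unchanged, while for fixed $z\ne0$ it behaves like $V(\lambda_p/z)=O(\lambda_p^{1/\o(\M)})$, which supplies the decay making the series converge locally uniformly in $S_\ga$.

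\emph{The expansion estimate.} Splitting $f(z)-\sum_{n<N}a_nz^n=-\sum_{n<N}a_nz^ne^{-V(\lambda_n/z)}+\sum_{n\ge N}a_nz^n\bigl(1-e^{-V(\lambda_n/z)}\bigr)$, the first (finite) sum is super-flat by the previous paragraph and is absorbed into $M_N|z|^N$, while for the tail one uses $|1-e^{-V(\lambda_n/z)}|\le|V(\lambda_n/z)|$ together with the asymptotic homogeneity of $V$ to compare the series with a Gevrey-type majorant; the $\lambda_n$ are chosen so that the leading transition term controls the rest, giving a bound $C_TA_T^NM_N|z|^N$ on each $T\ll S_\ga$. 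Thus $f\sim_\M\hat f$, i.e. $\tilde{\mathcal B}(f)=\hat f$. (Alternatively one may replace the term-by-term cut-offs by a single truncated Laplace transform whose kernel is built from $V$, which automates this balancing.)

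\emph{Necessity $(ii)\Rightarrow(i)$, by contraposition.} Assume $\ga>\o(\M)$ and take the divergent series $\hat f=\sum_p M_pz^p$, which lies in $\C[[z]]_\M$ because $m_p\to\oo$ forces $M_p^{1/p}\to\oo$. If the Borel map were onto, then by rotation invariance of the class and of $\C[[z]]_\M$ we may realize $\hat f$ as $f_\theta\in\tilde{\mathcal A}_\M(S(\theta,\ga))$ for every $\theta\in\R$. Whenever $|\theta-\theta'|<(\ga-\o(\M))\pi$ the overlap $S(\theta,\ga)\cap S(\theta',\ga)$ has opening greater than $\o(\M)\pi$ and $f_\theta-f_{\theta'}$ is flat there, so Theorem~\ref{TheoWatsonlemmaProxOrder} (with a shifted bisector, Remark~\ref{rema.Watson.Lemma.bisect.direct}) forces $f_\theta=f_{\theta'}$ on the overlap. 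The $f_\theta$ therefore glue to a single holomorphic $F$ on all of $\mathcal R$. On $S(0,\ga)$ the difference $F(e^{2\pi i}z)-F(z)$ is flat (both terms realize $\hat f$) and, the opening exceeding $\o(\M)\pi$, it vanishes; hence $F$ is single valued and descends to a punctured disc. Covering the circle of directions by finitely many proper subsectors, on each of which $F$ is bounded near the vertex (since $F\sim_\M\hat f$ there), we conclude that $F$ is bounded near $0$; by Riemann's removable-singularity theorem $F$ is holomorphic at $0$, so $\hat f$ is its convergent Taylor expansion, contradicting its divergence. Hence surjectivity fails for $\ga>\o(\M)$.

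\emph{Main obstacle.} The genuinely delicate step is the expansion estimate in the sufficiency part: in contrast with the Gevrey kernel $e^{-(\cdot)^{k}}$, the kernel $e^{-V(\cdot)}$ is controlled only through the limit relation (I), with no exact homogeneity, so both the tuning of the scales $\lambda_p$ and the uniform control of the tail on each subsector must be extracted solely from the asymptotic behaviour of $V$ and from the inequalities $AV(t)\le M(t)\le BV(t)$ of Remark~\ref{rema.M.and.V.admsibility.condition}.
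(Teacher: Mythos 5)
A preliminary remark on the comparison itself: the paper offers no proof of Theorem~\ref{theoBorelRittGevreySanz}; it quotes it from \cite[Theorem~6.1]{SanzFlat}, noting only that the hypotheses there can be relaxed to the standing assumptions of Section~\ref{sectionFlatnessExtension}. So your argument has to stand on its own. Your necessity direction (ii)$\Rightarrow$(i) does: realizing the divergent series $\sum_p M_p z^p\in\C[[z]]_{\M}$ on every rotated sector $S(\theta,\ga)$, identifying the realizations on overlaps of opening greater than $\pi\o(\M)$ via Theorem~\ref{TheoWatsonlemmaProxOrder} and Remark~\ref{rema.Watson.Lemma.bisect.direct}, gluing, killing the monodromy by the same quasianalyticity statement, and concluding with Riemann's removable singularity theorem is a correct and complete chain of reasoning.

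The sufficiency direction (i)$\Rightarrow$(ii) — the substance of the theorem — has a genuine gap, and it is not the ``delicate tuning'' you defer to at the end: within your head/tail strategy no choice of the scales $\lambda_p$ can work, because two of your requirements are contradictory. On one hand, for the series $\sum_p a_p z^p\bigl(1-e^{-V(\lambda_p/z)}\bigr)$ to converge at a fixed real $z=x>0$ when $a_p=M_p$, the terms must tend to $0$; if $\inf_p\lambda_p=\lambda>0$, then by property (III) one has $1-e^{-V(\lambda_p/x)}\ge 1-e^{-V(\lambda/x)}>0$ uniformly in $p$, so the terms are bounded below by $M_p x^p\bigl(1-e^{-V(\lambda/x)}\bigr)\to\infty$ and the series diverges; hence convergence forces $\lambda_p\to 0$ (very fast). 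On the other hand, your head estimate bounds each term $a_n z^n e^{-V(\lambda_n/z)}$, $n<N$, by $C_T A_T^N M_N|z|^N$ with $C_T,A_T$ \emph{independent of} $N$, as the definition of $\M$-asymptotics demands; taking the infimum over $N$ this means $e^{-\Re V(\lambda_n/z)}\lesssim e^{-M(1/(A_T|z|))}$, and comparing exponential rates as $|z|\to 0$ — using \eqref{ineq.function.M.and.V} and property (I), which give $M(1/(A_T|z|))\ge (A A_T^{-1/\o(\M)}-\ep)V(1/|z|)$ and $\Re V(\lambda_n/z)\le(\lambda_n^{1/\o(\M)}+\ep)V(1/|z|)$ — this forces $\lambda_n^{1/\o(\M)}\ge A A_T^{-1/\o(\M)}-2\ep$, i.e.\ $\inf_n\lambda_n>0$. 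This is precisely why the term-by-term cutoff proves the classical Borel--Ritt theorem, where the constants $C_N$ may grow arbitrarily with $N$, but cannot produce Gevrey or $\M$-asymptotics. Your parenthetical alternative — a single truncated Laplace-type transform with kernel built from $V$ — is indeed the correct route, and it is essentially the proof of \cite[Theorem~6.1]{SanzFlat} (kernels and moment sequences associated with $V$, applied to a generalized Borel transform of $\hat f$); but none of it is carried out here, so the core of the implication is missing. A secondary flaw: your tail bound invokes $V(\lambda_p/z)=O(\lambda_p^{1/\o(\M)})$ for fixed $z$, i.e.\ control of $V$ near the vertex, which properties (I)--(VI) of Theorem~\ref{propanalproxorde} do not supply ((I) governs behaviour at infinity, (III) only the positive real axis).
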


From this result we may generalize Theorem 1 in~\cite{FruchardZhang}.

%{\color{blue} Theorem 1 in FZ, without the type}

\begin{theo}
Given $\M$ and $\ga>0$, suppose $f$ is holomorphic in a sectorial region $G_\ga$ is bounded in every $T\ll G_\ga$, and it admits $\hat{f}\in\C[[z]]$ as its $\M$-asymptotic expansion in a direction $\theta\in(-\pi\ga/2,\pi\ga/2)$. Then, $f\in \tilde{\mathcal{A}}_{\M}(G_\ga)$ and $f\sim_{\M}\hat{f}$ in $G_\ga$.
\end{theo}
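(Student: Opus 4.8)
The plan is to manufacture, on a sector where the generalized Borel--Ritt--Gevrey theorem is available, an auxiliary function carrying the same expansion $\hat f$, and then to recast the hypothesis on $f$ as a flatness statement about the difference, to which the propagation results of Section~\ref{sectionFlatnessExtension} apply. First I would record that the hypothesis already forces $\hat f=\sum_{n\ge0}a_nz^n\in\C[[z]]_\M$: writing the defining inequality of Definition~\ref{defiMAsympExpanDirec} in direction $\theta$ for the indices $p$ and $p+1$ and subtracting, one gets $|a_p||z|^p\le C_\theta A_\theta^pM_p|z|^p+C_\theta A_\theta^{p+1}M_{p+1}|z|^{p+1}$ on $(0,r_\theta e^{i\theta}]$; letting $|z|\to0$ yields $|a_p|\le C_\theta A_\theta^pM_p$, that is, $\hat f\in\C[[z]]_{\M,A_\theta}\subset\C[[z]]_\M$.

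The heart of the matter is the following assertion for small openings: if $0<\beta\le\o(\M)$, $G'$ is a sectorial region of opening $\pi\beta$, $f$ is holomorphic and bounded on every $T\ll G'$, and $f$ has $\M$-asymptotic expansion $\hat f\in\C[[z]]_\M$ in some direction $\theta'$ of $G'$, then $f\sim_\M\hat f$ in $G'$. To prove it, since $\beta\le\o(\M)$ the generalized Borel--Ritt--Gevrey theorem (Theorem~\ref{theoBorelRittGevreySanz}, applied on a sector bisected by the bisecting direction of $G'$, which is legitimate after an obvious rotation since the moduli of the coefficients are unchanged) yields a function $g\in\tilde{\mathcal A}_\M(S)$ on the corresponding sector $S\supset G'$ with $g\sim_\M\hat f$. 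Then $h:=f-g$ is holomorphic on $G'$, bounded on every $T\ll G'$ (as both $f$ and $g$ are), and, since $f$ and $g$ share the expansion $\hat f$ in direction $\theta'$, it is $\M$-flat in direction $\theta'$. Proposition~\ref{lemma.Mflat.one.direction.sectorial.regions} then provides, for every $T\ll G'$, an estimate $|h(z)|\le k_1e^{-M(1/(k_2|z|))}$ on $T$, so Theorem~\ref{theo.thillez.flat.function.M} gives $h\in\tilde{\mathcal A}_\M(G')$ with $h\sim_\M\hat0$. As $\tilde{\mathcal A}_\M(G')$ is a vector space on which the Borel map is additive (Remark~\ref{rema.alg.prop.ultra.class}), $f=g+h\sim_\M\hat f$ on $G'$.

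When $\ga\le\o(\M)$ the theorem follows by applying this assertion with $G'=G_\ga$ and $\theta'=\theta$. The remaining case $\ga>\o(\M)$, where Borel--Ritt--Gevrey is no longer available on the whole of $G_\ga$, I would handle by a finite covering combined with a continuation in the argument. Fixing $T\ll G_\ga$ and enlarging it so that $\theta$ becomes one of its interior directions, I would choose directions $\theta=\phi_0,\phi_1,\dots,\phi_k$ exhausting $T$ towards one radial boundary and $\theta=\psi_0,\psi_1,\dots,\psi_l$ towards the other, with consecutive gaps small enough that each consecutive pair lies in a sectorial region $G^{(j)}\ll G_\ga$ of opening at most $\pi\o(\M)$ overlapping its predecessor. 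Applying the small-opening assertion on $G^{(1)}$ (which contains $\theta$) gives $f\sim_\M\hat f$ there, hence $\M$-asymptotic expansion in the direction $\phi_1$, which is interior to $G^{(2)}$; repeating with $\theta'=\phi_1$ on $G^{(2)}$, and so on along both chains, yields $f\sim_\M\hat f$ on every $G^{(j)}$. Since finitely many of these cover $T$, taking the largest of the finitely many constants produces the required estimate on $T$; as $T\ll G_\ga$ was arbitrary, this gives $f\in\tilde{\mathcal A}_\M(G_\ga)$ with $f\sim_\M\hat f$. The delicate point I anticipate is precisely this bookkeeping when $\ga>\o(\M)$: arranging the covering so that each piece has opening at most $\pi\o(\M)$, lies $\ll G_\ga$, and overlaps the next one in a direction interior to both, so that the propagation direction at each step is genuinely interior and the finitely many local expansions glue into a single expansion on $T$.
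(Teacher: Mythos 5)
Your proposal is correct and takes essentially the same route as the paper's proof: subtract a Borel--Ritt--Gevrey interpolant (Theorem~\ref{theoBorelRittGevreySanz}) to reduce the hypothesis to $\M$-flatness of $f-g$ in one direction, propagate that flatness with Proposition~\ref{lemma.Mflat.one.direction.sectorial.regions}, and, for $\ga>\o(\M)$, chain overlapping small-opening regions to cover any $T\ll G_\ga$. Your two extra precautions --- explicitly checking $\hat f\in\C[[z]]_\M$ before invoking Borel--Ritt--Gevrey, and requiring each overlap direction to be \emph{interior} to both neighbouring regions (the paper's choice of directions spaced $\pi\o/8$ with sectors of opening $\pi\o/4$ actually places $\theta_1$ on the boundary of $S_0$, a harmless glitch) --- only make the same argument slightly more careful.
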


\begin{proof}
We distinguish two cases:
\begin{enumerate}
	\item Sectorial regions of small opening: If $\ga<\o$, we take $\ga<\mu<\o$. By the Borel-Ritt-Gevrey Theorem~\ref{theoBorelRittGevreySanz} we know that there exists a function $f_0\in \tilde{\mathcal{A}}_{\M}(S_\mu)$ such that $f_0\sim_{\M}\hat{f}$ in $S_\mu$. Then the function
$g:=f-f_0$ is holomorphic in $G_\ga$, bounded in every proper bounded subsector of $G_\ga$ and it is $\M$-flat in direction $\theta$.
Using Proposition~\ref{lemma.Mflat.one.direction.sectorial.regions}, we see that $g$ is $\M$-flat in $G_\ga$.

Then, for every proper bounded subsector $T$ of $G_\ga$, there exists positive constants $A(T),B(T),C(T),D(T)>0$ such that
\begin{align*}
|f(z)-\sum_{n=0}^{p-1} a_n z^n| &\leq |g(z)|+|f_0(z)-\sum_{n=0}^{p-1} a_n z^n| \\
&\leq
AC^pM_p|z|^p+BD^pM_p|z|^p\le 2 \max (A, B) \max(C^p,D^p)  M_p|z|^p \end{align*}
for every $z\in T$ and every $p\in\N_0$. Consequently, $f\in \tilde{\mathcal{A}}_{\M}(G_\ga)$ and $f\sim_{\M}\hat{f}$ in $G_\ga$.

\item Sectorial regions of large opening: If $\ga\geq\o$, we may choose natural numbers $\ell$ and $m$, and for $j=-\ell,\dots,-1,0,1,2,\dots, m$ we may consider directions $\theta_j\in(-\pi\ga/2,\pi\ga/2)$
 such that
	\begin{align*}
	\theta_0:= \theta,\qquad \theta_j:=&\theta_{j-1}+\pi\o/8,& \quad j=1,\dots, m, \qquad \pi\ga/2-\theta_m&<\pi\o/8.\\
	\qquad \theta_{j}:=&\theta_{j+1}-\pi\o/8,& \quad j=-1,\dots, -l,\qquad -\pi\ga/2+\theta_{-l}&>- \pi\o/8 .\\
	\end{align*}
There exists $\ro_0>0$ such that $S_0=S(\theta_0,\pi\o/4,\ro_0)\en G_\ga$. We apply the first part in the sector $S_0$ and we see that
$f\in \tilde{\mathcal{A}}_{\M}(S_0)$ and $f\sim_{\M}\hat{f}$ in $S_0$. In particular, $f$ admits $\hat{f}$ as its $\M$-asymptotic expansion in directions $\theta_1$ and $\theta_{-1}$ for $|z|<\ro_0$. Repeating the process we see that
$f\in \tilde{\mathcal{A}}_{\M}(G_\ga)$ and $f\sim_{\M}\hat{f}$ in $G_\ga$.
\end{enumerate}
\end{proof}

The proof of our last statement is now straightforward.

\begin{coro}
Given $\M$, $\ga>0$ and $\theta\in (-\pi\ga/2,\pi\ga/2)$, we have that
\begin{align*}
\tilde{\mathcal{A}}_{\M}(G_\ga) = \{f\in \mathcal{H}(G_\ga):\, &\text{$f$ is bounded in every proper bounded subsector $T$ of $G_\ga$} \\
&\text{and $f$ admits $\M$-asymptotic expansion in direction $\theta$} \}.
\end{align*}
\end{coro}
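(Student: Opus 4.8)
The plan is to establish the stated set equality by proving the two inclusions separately; the substantive direction will be an immediate appeal to the preceding theorem, while the reverse direction is a routine reading of the definitions.

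For the inclusion $\supseteq$, I would take an $f\in\mathcal{H}(G_\ga)$ that is bounded in every proper bounded subsector of $G_\ga$ and admits an $\M$-asymptotic expansion in direction $\theta$. These are precisely the hypotheses of the previous theorem, whose conclusion is exactly that $f\in\tilde{\mathcal{A}}_{\M}(G_\ga)$ (indeed $f\sim_{\M}\hat{f}$ in $G_\ga$, where $\hat{f}$ is the series furnished by the directional expansion). Thus nothing beyond invoking that theorem is needed here; all the real work for this inclusion has already been carried out via the $\M$-flatness extension results of Section~\ref{sectionFlatnessExtension}.

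For the inclusion $\subseteq$, I would take $f\in\tilde{\mathcal{A}}_{\M}(G_\ga)$ with $f\sim_{\M}\hat{f}=\sum_{n\ge0}a_nz^n$ and verify both membership conditions on the right-hand side. Boundedness is immediate: for each $T\ll G_\ga$ the defining estimate of $\tilde{\mathcal{A}}_{\M}(G_\ga)$ taken with $p=0$ (the sum being empty and $M_0=1$) gives $|f(z)|\le C_T$ for $z\in T$, so $f$ is bounded in every proper bounded subsector. To obtain the $\M$-asymptotic expansion in direction $\theta$, the only point requiring a word is to place the ray of direction $\theta$ inside a single proper subsector: since $|\theta|<\pi\ga/2$, I would pick $\beta\in(0,\ga)$ with $|\theta|<\pi\beta/2$, then $\beta'\in(\beta,\ga)$, and invoke the sectorial-region property to obtain $\rho'>0$ with $S(0,\beta',\rho')\subset G_\ga$; taking any $\rho\in(0,\rho')$ gives $\overline{S(0,\beta,\rho)}\subset S(0,\beta',\rho')\subset G_\ga$, so that $T:=S(0,\beta,\rho)\ll G_\ga$ and the segment $(0,\rho e^{i\theta}]$ lies in $T$. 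Restricting the uniform estimate valid on $T$ to this segment yields exactly the directional estimate of Definition~\ref{defiMAsympExpanDirec}, with $r_\theta=\rho$, $C_\theta=C_T$ and $A_\theta=A_T$.

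There is no genuine obstacle at the level of the corollary itself: the entire difficulty is concentrated in the propagation theorem that supplies the inclusion $\supseteq$. The only step in the present argument that is more than a direct unwinding of definitions is the construction of a proper bounded subsector containing the prescribed ray, and that is immediate from the definition of sectorial region.
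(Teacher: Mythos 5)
Your proposal is correct and is essentially the paper's own argument: the paper dismisses the proof as ``now straightforward'' precisely because, as you say, the inclusion $\supseteq$ is the preceding extension theorem applied verbatim, while $\subseteq$ is a direct unwinding of the definitions (the $p=0$ estimate for boundedness, and a proper bounded subsector containing the ray for the directional expansion). The only slip is that the endpoint $\rho e^{i\theta}$ of your segment has modulus exactly $\rho$ and so lies on the boundary of the open sector $T=S(0,\beta,\rho)$ rather than in $T$ itself; taking $r_\theta<\rho$ (or noting that the uniform estimate extends to $\overline{T}\subset G_\ga$ by continuity) fixes this trivially.
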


\noindent\textbf{Acknowledgements}: The first two authors are partially supported by the Spanish Ministry of Economy and Competitiveness under project MTM2016-77642-C2-1-P. The first author is partially supported by the University of Valladolid through a Predoctoral Fellowship (2013 call) co-sponsored by the Banco de Santander. The third author is supported by the FWF-Project J 3948--N35. Part of these results were obtained in the course of a research stay of the first author in the Centro di Ricerca Matematica Ennio De Giorgi (Scuola Normale Superiore di Pisa, Italy), what he expresses his gratitude for.

\vskip.5cm
\noindent Authors' Affiliation:\par\vskip.5cm
Javier Jim\'enez-Garrido and Javier Sanz\par
Departamento de \'Algebra, An\'alisis Matem\'atico, Geometr{\'\i}a y Topolog{\'\i}a\par
Instituto de Investigaci\'on en Matem\'aticas de la Universidad de Valladolid, IMUVA\par
Facultad de Ciencias\par
Universidad de Valladolid\par
47011 Valladolid, Spain\par
E-mail: jjjimenez@am.uva.es (Javier Jim\'enez-Garrido), jsanzg@am.uva.es (Javier Sanz).
\par\vskip.5cm

Gerhard Schindl\par
Fakult\"at f\"ur Mathematik\par
Universit\"at Wien\par
Oskar-Morgenstern Platz 1, A-1090 Wien, Austria\par
E-mail: gerhard.schindl@univie.ac.at.

\end{document}